\setlist[enumerate]{topsep=0em, itemsep= -0em, parsep = 0 em, label=$(\alph*)$}
\let\emptyset\varnothing
\newcommand{\NN}{\mathbb{N}}
\newcommand{\ZZ}{\mathbb{Z}}
\newcommand{\RR}{\mathbb{R}}
\newcommand{\PP}{\mathbb{P}}
\newcommand{\cD}{\mathcal{D}}
\newcommand{\cE}{\mathcal{E}}
\newcommand{\cQ}{\mathcal{Q}}
\newcommand{\cX}{\mathcal{X}}
\DeclareMathOperator{\MaxRk}{MaxRk}
\DeclareMathOperator{\fF}{\mathfrak{F}}
\DeclareMathOperator{\Rad}{Rad}
\DeclareMathOperator{\rep}{rep}
\DeclareMathOperator{\Char}{char}
\DeclareMathOperator{\Hom}{Hom}
\DeclareMathOperator{\coker}{coker}
\DeclareMathOperator{\rk}{rk}
\DeclareMathOperator{\re}{re}
\DeclareMathOperator{\modd}{mod}
\DeclareMathOperator{\supp}{supp}
\DeclareMathOperator{\Ext}{Ext}
\DeclareMathOperator{\id}{id}
\DeclareMathOperator{\im}{im}
\DeclareMathOperator{\End}{End}
\DeclareMathOperator{\EIP}{EIP}
\DeclareMathOperator{\Inj}{Inj}
\DeclareMathOperator{\Ad}{Ad}
\DeclareMathOperator{\EKP}{EKP}
\DeclareMathOperator{\IJT}{IJT}
\DeclareMathOperator{\CJT}{CJT}
\DeclareMathOperator{\dimu}{\underline{dim}}
\let\emptyset\varnothing
\newtheorem{proposition}{Proposition}[section]
\newtheorem{Theorem}[proposition]{Theorem}
\newtheorem{Lemma}[proposition]{Lemma}
\newtheorem{corollary}[proposition]{Corollary}
\newtheorem{Example}[proposition]{Example}
\newtheorem*{TheoremN}{Theorem}
\newtheorem*{corollaryN}{Corollary}
\newtheorem*{PropositionN}{Proposition}
\newenvironment{example}[1][Example.]{\begin{trivlist}
\item[\hskip \labelsep {\bfseries #1}]}{\end{trivlist}}
\newenvironment{Remark}[1][Remark.]{\begin{trivlist}
\item[\hskip \labelsep {\bfseries #1}]}{\end{trivlist}}
\newenvironment{Definition}[1][Definition.]{\begin{trivlist}
\item[\hskip \labelsep {\bfseries #1}]}{\end{trivlist}}
\begin{document}

\rmfamily


\thispagestyle{empty}
\author{Daniel Bissinger}

\title{Indecomposable Jordan types of Loewy length $2$} 
\address{Christian-Albrechts-Universit\"at zu Kiel, Ludewig-Meyn-Str. 4, 24098 Kiel, Germany}
\email{bissinger@math.uni-kiel.de}

\maketitle

\makeatletter
\def\blfootnote{\gdef\@thefnmark{}\@footnotetext}
\makeatother

\blfootnote{\textup{2010} \textit{Mathematics Subject Classification}: 16G20}
\blfootnote{\textit{Keywords}: Kronecker algebra, Constant Jordan Type,  Covering theory}


\begin{abstract}
Let $k$ be an algebraically closed field, $\Char(k) = p \geq 2$ and $E_r$ be a $p$-elementary abelian group of rank $r \geq 2$. Let $(c,d) \in \NN^2$. 
We show that there exists an indecomposable module of constant Jordan type $[1]^c [2]^d$ and Loewy length $2$ if and only if $q_{\Gamma_r}(d,d+c) \leq 1$ and $c \geq r-1$, where $q_{\Gamma_r}(x,y) := x^2 + y^2-rxy$ denotes the Tits form of the generalized Kronecker quiver $\Gamma_r$.\\
Since $p > 2$ and constant Jordan type $[1]^c [2]^d$ imply Loewy length $\leq 2$, we get in this case the full classification of Jordan types $[1]^c [2]^d$ that arise from indecomposable modules.
\end{abstract}

\section*{Introduction}

Let $r \geq 2$, $k$ be an algebraically closed field of characteristic $p > 0$ and $E_r$ be a $p$-elementary abelian group of rank $r$. It is well known that the category of finite-dimensional $kE_r$-modules $\modd kE_r$ is of wild type, whenever $p \geq 3$ or $p = 2$ and $r > 2$. Therefore subclasses with more restrictive properties have been studied; in  \cite{CFP1}, the subclass of modules of constant Jordan type and modules with even more restrictive properties, called equal images property and equal kernels property, were introduced.\\
But even these smaller subcategories have turned out to be wild $($see \cite[5.5.5]{Be1} and \cite{Bi1}$)$  and the classification of their objects therefore is considered hopeless. On the other hand, since such modules give rise to vector bundles $($see \cite[8.4.11]{Be1}$)$, the presence of many indecomposables may lead to the construction of interesting bundles.

\noindent Based on these results and the considerations in \cite{Be1} this work is concerned with the constant Jordan types that arise from $kE_r$-modules of Loewy length $2$. If we allow arbitrary modules of constant Jordan type, a complete answer is given in \cite[10.5.1]{Be1} $($the proof given for $p=2$ works in general$)$:

\begin{PropositionN} Let $\Char(k) = p > 2$. There exists a module of Loewy length $2$ and constant Jordan type $[1]^c [2]^d$ in $\modd kE_r$ if and only if $(c,d) \in  \NN_{\geq r-1} \times \NN$.
\end{PropositionN}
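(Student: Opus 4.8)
\textbf{Reformulation.} Writing $kE_r = k[x_1,\dots,x_r]/(x_1^p,\dots,x_r^p)$, a module $M$ with $\Rad^2 M = 0$ is the same datum as a representation $M_0 \xrightarrow{(\phi_1,\dots,\phi_r)} M_1$ of the generalized Kronecker quiver $\Gamma_r$, with $M_1 \supseteq \Rad M$ and $x_i$ inducing $\phi_i \colon M_0 \to M_1$. For $\alpha = \sum_i \lambda_i x_i$ the operator $\alpha_M$ squares to zero and acts by $\Phi_\lambda := \sum_i \lambda_i \phi_i$ from $M_0$ to $M_1$, so its Jordan type is $[2]^{\rk \Phi_\lambda}[1]^{\dim M - 2\rk \Phi_\lambda}$. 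Hence $M$ has constant Jordan type $[1]^c[2]^d$ precisely when $\rk \Phi_\lambda = d$ for all $0 \neq \lambda \in k^r$ and $c = \dim M_0 + \dim M_1 - 2d$, with Loewy length exactly $2$ corresponding to $d \geq 1$. Sheafifying over $\PP^{r-1}$, the $\Phi_\lambda$ assemble into a morphism $\Phi \colon M_0 \otimes \cO \to M_1 \otimes \cO(1)$ of constant rank $d$; its kernel $K$ and cokernel $Q$ are therefore vector bundles of ranks $a := \dim M_0 - d$ and $b := \dim M_1 - d$, and $c = a + b$. The whole reformulation ignores $p$, which is why the characteristic-two argument carries over.

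\textbf{Sufficiency.} I would realize every $(c,d)$ with $c \geq r-1$, $d \geq 1$ by a direct sum $W_d \oplus k^{\oplus(c-r+1)}$, where $k$ is the trivial module (type $[1]$) and $W_d$ is a module of constant Jordan type $[1]^{r-1}[2]^d$. The latter amounts to an everywhere-injective family $k^d \to k^{d+r-1}$, i.e.\ a bundle monomorphism $\cO^d \hookrightarrow \cO(1)^{d+r-1}$ with locally free cokernel of rank $r-1$. For $d=1$ this is the Euler sequence (cokernel $T_{\PP^{r-1}}$); for general $d$ I would obtain $W_d$ from a \emph{generic} choice of $\phi_1,\dots,\phi_r$, using that the locus in $\PP^{r-1}$ where $\Phi_\lambda$ drops rank has expected codimension $(d+r-1)-d+1 = r$, which exceeds $\dim \PP^{r-1}=r-1$, so for generic $\phi_i$ this locus is empty. (For $d=0$ one takes the semisimple module $k^{\oplus c}$.)

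\textbf{Necessity.} Assume constant Jordan type $[1]^c[2]^d$ with $d \geq 1$. From the four-term exact sequence of bundles on $\PP^{r-1}$,
\[
0 \to K \to M_0 \otimes \cO \xrightarrow{\ \Phi\ } M_1 \otimes \cO(1) \to Q \to 0,
\]
the total Chern class in $H^*(\PP^{r-1}) = \ZZ[h]/(h^r)$ satisfies $c(Q) = (1+h)^{\dim M_1}\,c(K)$. Since $\rk Q = b$, the coefficient of $h^j$ on the right must vanish for $b < j \leq r-1$, giving the linear system $\sum_{i=0}^{a}\binom{\dim M_1}{j-i}\,c_i(K) = 0$ (with $c_0(K)=1$). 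If $c = a+b \leq r-2$, then the indices $j = b+1,\dots,b+a+1$ all lie in range and produce $a+1$ equations in the $a$ unknowns $c_1(K),\dots,c_a(K)$; consistency forces the $(a+1)\times(a+1)$ determinant $\det\big(\binom{\dim M_1}{\,b+1+s-t\,}\big)_{s,t=0}^{a}$ to vanish. This is the crux: by the Lindström--Gessel--Viennot lemma (or dual Jacobi--Trudi) this Toeplitz binomial determinant equals the number of semistandard tableaux of rectangular shape $(a+1)^{\,b+1}$ with entries in $\{1,\dots,\dim M_1\}$, hence is \emph{strictly positive} as soon as $b+1 \leq \dim M_1 = d+b$, i.e.\ as soon as $d \geq 1$. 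This contradiction yields $c \geq r-1$.

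\textbf{Main obstacle.} The delicate point is the last step. A naive dimension count (Thom--Porteous) only shows that the rank-drop locus is forced to be nonempty when its expected codimension $(a+1)(b+1)$ is at most $r-1$, which gives the weaker bound $(a+1)(b+1) \geq r$ rather than $c \geq r-1$. Upgrading to the sharp inequality requires exploiting the \emph{full} over-determined Chern-class relation and, crucially, the strict positivity of the associated rectangular Schur function --- this is exactly where the hypothesis $d \geq 1$ (genuine Loewy length $2$) enters. I would therefore isolate this binomial-determinant positivity as a separate lemma.
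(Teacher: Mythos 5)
Your proof is correct, and it is essentially the approach behind the paper's own treatment: the paper does not reprove this proposition but cites \cite[10.5.1]{Be1}, whose argument runs exactly as yours does --- reformulate a Loewy-length-two module of constant Jordan type as a constant-rank morphism $\cO^{\dim M_0} \to \cO(1)^{\dim M_1}$ on $\PP^{r-1}$ with locally free kernel and cokernel, derive necessity of $c \geq r-1$ from the Chern-class relation $c(Q) = (1+h)^{\dim M_1}c(K)$ via the strict positivity of the rectangular binomial determinant (your Lindstr\"om--Gessel--Viennot lemma, with $d \geq 1$ entering exactly as the condition $b+1 \leq \dim M_1$ making the Schur value $s_{(a+1)^{b+1}}(1^{\dim M_1})$ positive), and realize each admissible $(c,d)$ by a generic everywhere-injective family giving $W_d \oplus k^{\oplus(c-r+1)}$. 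That your construction coincides with the cited one is corroborated by the paper's remark immediately following the proposition that the modules produced there have at least $c-(r-1)+1 = c-r+2$ direct summands --- precisely the summand count of $W_d \oplus k^{\oplus(c-r+1)}$.
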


The modules constructed in the proof of the result above are far from being indecomposable. In fact, such a module has at least $c-(r-1) + 1 = c - r + 2$ direct summands.
We study  constant Jordan types that arise from indecomposable $kE_r$-modules with the equal images or the equal kernels property of Loewy length $2$. Since modules of Loewy length $2$ are closely related to representations of the Kronecker quiver $\Gamma_r$ with $r$ arrows, we study this problem in the hereditary category $\rep(\Gamma_r)$ of finite dimensional representations of $\Gamma_r$. We denote by $q_{\Gamma_r} \colon \NN^2_0 \to \ZZ, (x,y) \mapsto x^2+y^2- rxy$ the Tits form of $\Gamma_r$. \\
Using recent results $($see \cite{Ri10}$)$ on elementary representations of $\Gamma_r$ for $r \geq 3$, we show that the generic Jordan type $[1]^{c_M} [2]^{d_M}$ of an indecomposable, non-simple representation $M \in \rep(\Gamma_r)$ is contained in 
\[ \IJT := \{ (c,d) \in \NN^2 \mid q_{\Gamma_r}(d,d+c) \leq 1,c \geq r - 1 \}.\]
Then we show the existence of an indecomposable representation $M$ $($for abitrary characteristic$)$ in $\rep(\Gamma_r)$ that has the equal kernels property and constant Jordan type $[1]^c [2]^d$ for each $(c,d) \in \IJT$. We arrive at this result by considering the universal covering $\pi \colon C_r \to \Gamma_r$ of the Kronecker quiver in conjunction with Kac's Theorem and a homological characterization of the representations with the equal kernels property in $\rep(\Gamma_r)$. 
In the end we transport our results back to $\modd kE_r$ and conclude:

\begin{TheoremN}
Let $\Char(k) = p > 0$, $r \geq 2$ and $(c,d) \in \NN^2_0$.
 The following statements are equivalent:\begin{enumerate}
\item  There exists an indecomposable $kE_r$-module of constant Jordan type $[1]^c [2]^d$ and Loewy length $2$.
\item There exists an indecomposable $kE_r$-module with the equal images property of constant Jordan type $[1]^c [2]^d$ and Loewy length $2$.
\item There exists an indecomposable $kE_r$-module with the equal kernels property of constant Jordan type $[1]^c [2]^d$ and Loewy length $2$.
\item $(c,d) \in \IJT$.
\end{enumerate}
\end{TheoremN}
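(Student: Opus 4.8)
The plan is to route everything through the functor relating $kE_r$-modules of Loewy length $\leq 2$ to $\rep(\Gamma_r)$ and to reduce the statement to the two facts about the Kronecker category developed above. Under this correspondence a Loewy length $2$ module $M$ is encoded by the $r$ structure maps $x_i\colon M/\rad M\to\rad M$, i.e.\ by a representation of dimension vector $(m_1,m_2)=(\dim M/\rad M,\dim\rad M)$; for $0\neq\alpha\in k^r$ the operator $X_\alpha=\sum_i\alpha_i x_i$ satisfies $X_\alpha^2=0$, so its Jordan type is $[1]^{c}[2]^{d}$ with $d=\rk X_\alpha$ and $c=(m_1+m_2)-2d$. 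Hence constant Jordan type $[1]^c[2]^d$ means precisely $\rk X_\alpha=d$ for every $0\neq\alpha$. For an indecomposable non-simple $M$ the equal kernels property then forces every $X_\alpha$ to be injective, so $d=m_1$, $c+d=m_2$, and the dimension vector is $(d,c+d)$; dually the equal images property forces every $X_\alpha$ surjective. Note $q_{\Gamma_r}(d,c+d)=q_{\Gamma_r}(d,d+c)$, so the defining inequality of $\IJT$ is exactly the Kac condition on this dimension vector.

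The implications $(b)\Rightarrow(a)$ and $(c)\Rightarrow(a)$ are immediate: a module with the equal images or equal kernels property is in particular a module, and the presence of a block $[2]$ forces Loewy length exactly $2$. For $(a)\Rightarrow(d)$ I would pass to the associated indecomposable representation of $\Gamma_r$; it is non-simple since $M$ has Loewy length $2$, so by the generic Jordan type result proved above its generic Jordan type $(c_M,d_M)$ lies in $\IJT$. Constant Jordan type means $\rk X_\alpha=d$ for all $0\neq\alpha$, in particular for generic $\alpha$, whence $d=d_M$ and $c=c_M$, so $(c,d)\in\IJT$. Here Kac's Theorem enters through the positive-root constraint $q_{\Gamma_r}\leq 1$, while the theory of elementary representations supplies the bound $c\geq r-1$ (consistent with the Proposition of \cite{Be1} quoted above).

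For $(d)\Rightarrow(c)$ I would, given $(c,d)\in\IJT$, produce an indecomposable representation of dimension vector $(d,c+d)$ all of whose maps $X_\alpha$ are injective, and transport it back to an indecomposable EKP module of constant Jordan type $[1]^c[2]^d$ and Loewy length $2$. Kac's Theorem guarantees an indecomposable of dimension vector $(d,c+d)$ as soon as $q_{\Gamma_r}(d,d+c)\leq 1$, but it says nothing about injectivity of the individual $X_\alpha$; supplying this is the crux. I would therefore work on the universal covering $\pi\colon C_r\to\Gamma_r$, build a suitable indecomposable over $C_r$ whose push-down realizes the dimension vector $(d,c+d)$, and invoke the homological characterization of the equal kernels property to certify that every $X_\alpha$ is injective, i.e.\ that the rank is constantly $d$.

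Finally $(d)\Rightarrow(b)$ follows from $(d)\Rightarrow(c)$ by the $k$-duality $M\mapsto M^{\ast}=\Hom_k(M,k)$: it preserves indecomposability and Loewy length, transposes each $X_\alpha$ and hence preserves the Jordan type $[1]^c[2]^d$, and interchanges the equal kernels and equal images properties. Thus the EKP module from $(c)$ dualizes to the required EIP module, closing the cycles $(d)\Rightarrow(b)\Rightarrow(a)\Rightarrow(d)$ and $(d)\Rightarrow(c)\Rightarrow(a)$. The main obstacle is the existence step $(d)\Rightarrow(c)$: Kac's Theorem only yields an indecomposable of the correct dimension vector, and forcing its structure maps to be injective in every direction $\alpha$ is exactly what the covering $C_r$ and the homological criterion for the equal kernels property are needed for.
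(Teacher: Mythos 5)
Your reduction of the equivalence to the Kronecker category is exactly the paper's route: transfer along $\fF$ (Propositions \ref{FunctorESP} and \ref{Proposition:WorchEmbedding}), necessity of $(d)$ from the restriction theorem \ref{Theorem:ConstantJordanTypeRestrictions}, sufficiency from the existence of indecomposable representations in $\EKP$ of dimension vector $(d,d+c)$, and a duality to pass from $\EKP$ to $\EIP$. (The paper dualizes with $D_{\Gamma_r}$ on $\rep(\Gamma_r)$ rather than with $\Hom_k(-,k)$ on $kE_r$-modules; on Loewy length $\leq 2$ the two agree, so your version is harmless.) Your bookkeeping --- $d = \rk$, dimension vector $(d,d+c)$ for an indecomposable non-simple $\EKP$-representation, a block $[2]$ forcing Loewy length exactly $2$ --- is also correct.

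The genuine gap is in your account of the crux $(d)\Rightarrow(c)$: you propose to realize \emph{every} $(c,d) \in \IJT$ by an indecomposable over the universal cover $C_r$ whose push-down lies in $\EKP$. This provably cannot work on all of $\IJT$: by the final lemma of Section 5 (built on Lemma \ref{Lemma:SourceRegularTree}), any indecomposable $M \in \Inj \subseteq \rep(C_r)$ has push-down $N$ with $\dim_k N_2 \geq (r-1)\dim_k N_1 + 1$, so the dimension vectors $(a,b) = (d,d+c)$ with $b \leq (r-1)a$, i.e. $r-1 \leq c \leq (r-2)d$, are unreachable by any gradable construction. Concretely, for $r = 3$ and $(c,d) = (2,2)$ one has $q_{\Gamma_3}(2,4) = -4 \leq 1$ and $c = 2 = r-1$, so $(2,2) \in \IJT$, yet $(a,b) = (2,4)$ satisfies $b = (r-1)a$ and no representation in $\Inj$ pushes down to it. The paper handles exactly this subrange by Chen-style explicit echelon-matrix bricks constructed directly over $\Gamma_r$ (Lemma \ref{Lemma:KernelEcholonEKP} and Proposition \ref{Result1}), reserving the covering-plus-Kac argument via source-regular subquivers (Proposition \ref{Result2}) for $(r-1)a+1 \leq b \leq (r-\frac{1}{r-1})a$, and using a further Coxeter-shift/thin-representation argument (Lemma \ref{Lemma:ShiftRepresentation}, Proposition \ref{Result3}) for the top band, where the equal kernels property is certified by Hom-vanishing against the elementary modules $X_\alpha$ rather than by membership in $\Inj$ at the shifted dimension vector. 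A smaller inaccuracy in your $(a)\Rightarrow(d)$: the bound $c \geq r-1$ comes from \cite[10.1.4]{Be1}, not from the elementary-representation theory, which instead supplies the strict inequality $q_{\Gamma_r}(d_M,d_M+c_M) < 1$ for regular indecomposables.
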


\noindent If in addition $\Char(k) = p > 2$, we get:
\begin{corollaryN}
Let $\Char(k) = p > 2$, $r \geq 2$. For each element $(c,d) \in \NN_0 \times \NN$ the following statements are equivalent:\begin{enumerate}
\item  There exists an indecomposable $kE_r$-module of constant Jordan type $[1]^c [2]^d$.
\item There exists an indecomposable $kE_r$-module with the equal images property of constant Jordan type $[1]^c [2]^d$.
\item There exists an indecomposable $kE_r$-module with the equal kernels property of constant Jordan type $[1]^c [2]^d$.
\item $(c,d) \in \IJT$.
\end{enumerate}
\end{corollaryN}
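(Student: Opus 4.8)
The plan is to derive the Corollary directly from the Theorem above by showing that, when $\Char(k) = p > 2$, the hypothesis on the Loewy length becomes redundant for modules of constant Jordan type $[1]^c[2]^d$. Each of the statements (a)--(c) of the Corollary is obtained from the corresponding statement of the Theorem by deleting the clause ``and Loewy length $2$'', so the implications from the Theorem's statements to the Corollary's statements are immediate. The equivalence with (d) will therefore follow once I establish the converse reduction.

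The key step is the claim: if $p > 2$ and $M$ is any $kE_r$-module of constant Jordan type $[1]^c[2]^d$, then $\Rad(kE_r)^2 M = 0$. To prove it I would write $kE_r = k[t_1,\dots,t_r]/(t_1^p,\dots,t_r^p)$ with $t_i = g_i - 1$, and for $\lambda = (\lambda_1,\dots,\lambda_r) \in \A^r \setminus \{0\}$ consider the nilpotent operator $\alpha_\lambda := \sum_{i=1}^r \lambda_i t_i$ acting on $M$, whose Jordan type (as $\lambda$ ranges over the cyclic shifted subgroups) computes the constant Jordan type of $M$. By definition $\alpha_\lambda$ then has Jordan blocks of size at most $2$ for every $\lambda \neq 0$, so $\alpha_\lambda^2|_M = 0$ for all such $\lambda$. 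Since $\lambda \mapsto \alpha_\lambda^2|_M \in \End_k(M)$ is a quadratic polynomial map vanishing on the Zariski-dense set $\A^r \setminus \{0\}$, it vanishes identically, and comparing coefficients gives $t_i^2|_M = 0$ for all $i$ and $2\,t_it_j|_M = 0$ for $i \neq j$. As $2$ is invertible for $p > 2$, this forces $t_it_j|_M = 0$ for all $i,j$, i.e.\ $\Rad(kE_r)^2 M = 0$.

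Granting the claim, I would finish as follows. Because $(c,d) \in \NN_0 \times \NN$ forces $d \geq 1$, the constant Jordan type $[1]^c[2]^d$ contains a block $[2]$, so $\alpha_\lambda|_M \neq 0$ for generic $\lambda$ and hence $\Rad(kE_r) M \neq 0$; together with the claim this shows that every $kE_r$-module of constant Jordan type $[1]^c[2]^d$ has Loewy length exactly $2$. Consequently each existence statement (a)--(c) in the Corollary is equivalent to its Loewy-length-$2$ counterpart in the Theorem, and by the Theorem those are all equivalent to statement (d), i.e.\ to $(c,d) \in \IJT$. This closes the chain of equivalences.

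The main obstacle is concentrated entirely in the claim, and specifically in making the passage from ``Jordan blocks of size $\leq 2$'' to the algebra identity $\Rad(kE_r)^2 M = 0$ rigorous: one must verify that constant Jordan type is read off from the operators $\alpha_\lambda$ in a way compatible with the density argument, and the indispensable role of $p > 2$ enters precisely when dividing the cross terms $2\,t_it_j$ by $2$. For $p = 2$ this division fails, which is exactly why the Corollary is restricted to $p > 2$ while the Theorem must retain the explicit Loewy length hypothesis.
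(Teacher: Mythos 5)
Your argument is correct and is essentially the paper's own proof: the paper likewise deduces the Corollary from Theorem \ref{Theorem:EquivalenceAllp} by observing that for $p>2$ one has $\Rad^2(kE_r)=(\{x_\alpha^2 \mid \alpha \in k^r\setminus\{0\}\})$ (citing \cite[1.17.1]{Be1}), so constant Jordan type $[1]^c[2]^d$ forces $\Rad^2(kE_r).M=0$ and, since $d\geq 1$, Loewy length exactly $2$, after which the Theorem applies. Your only deviation is that you prove this radical identity inline via polarization, $2\,t_it_j=(t_i+t_j)^2-t_i^2-t_j^2$ with $2$ invertible (dressed up as a Zariski-density and coefficient-comparison argument), where the paper simply cites Benson.
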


\noindent As another consequence of our considerations in $\rep(\Gamma_r)$ we obtain a refinement of \cite[5.5.5]{Be1}:

\begin{TheoremN}
Let $\Char(k) = p > 0$, $r \geq 3$ and $(c,d) \in \NN^2_0$ such that $r-1 \leq c \leq (r-1)d$. Then the full subcategory of modules with constant Jordan type $[1]^{nc} [2]^{nd}$, $n \in \NN$ has wild representation type.
\end{TheoremN}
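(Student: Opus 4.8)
The plan is to produce an exact representation embedding of the module category of a wild hereditary algebra into the subcategory in question; since such an embedding preserves indecomposability and reflects isomorphisms, this forces wild representation type. The first step is to pass from $\modd kE_r$ to $\rep(\Gamma_r)$: modules of Loewy length $2$ with constant Jordan type $[1]^{nc}[2]^{nd}$ correspond to representations $V=(V_0,V_1,(f_i))$ of $\Gamma_r$ whose generic pencil $\sum_i \lambda_i f_i$ has constant rank $nd$, and the functor sending $V$ to its associated $kE_r$-module preserves indecomposability and reflects isomorphisms onto the Loewy length $2$ modules. It therefore suffices to exhibit a wild family of indecomposable representations of $\Gamma_r$, all lying in the subcategory of representations with the equal kernels property (equivalently, constant rank equal to $\dim_k V_0$), whose associated modules have constant Jordan type of the prescribed shape.

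For the construction I would use the universal covering $\pi\colon C_r\to\Gamma_r$ together with the push-down functor $\rep(C_r)\to\rep(\Gamma_r)$. Since $r\ge 3$, the $r$-regular tree $C_r$ contains finite connected subquivers $Q$ whose underlying graph is neither a Dynkin nor a Euclidean diagram, so that $\rep(Q)$ is of wild representation type; because the fundamental group of the covering is free and acts freely on $C_r$, the push-down functor is exact, carries indecomposables to indecomposables, and sends representations in distinct orbits to non-isomorphic modules, so it transports the wildness of $\rep(Q)$ into $\rep(\Gamma_r)$. The heart of the argument is to choose $Q$ and a slice of dimension vectors on $Q$ so that every push-down simultaneously $(i)$ has the equal kernels property, hence constant rank, and $(ii)$ has associated module of constant Jordan type $[1]^{nc}[2]^{nd}$. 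This is exactly where the hypotheses enter: the condition $c\ge r-1$ guarantees that the relevant push-down dimension vectors land in the region supporting constant-rank representations with nontrivial moduli, while $c\le (r-1)d$ bounds the slope $(d+c):d$ so that a wild subquiver $Q$ producing precisely this constant Jordan type can be realized.

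To verify $(i)$ I would invoke the homological characterization of the equal kernels property established earlier, checking that the push-downs of the chosen $Q$-representations satisfy the vanishing conditions defining this class; to verify $(ii)$ I would compute the push-down dimension vector and the generic rank of the pencil, which by the equal kernels property is constant, confirming the Jordan type $[1]^{nc}[2]^{nd}$. Composing the push-down with the functor to $\modd kE_r$ then yields the desired embedding of a wild module category into the subcategory, proving the claim.

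The main obstacle is reconciling two a priori competing demands: the family must be genuinely wild, which requires the underlying combinatorics of $Q$ to be sufficiently large, yet every object of the family must retain constant Jordan type, i.e.\ the pencil $\sum_i\lambda_i f_i$ must have constant rank over all of $\PP^{r-1}$ rather than only generically. Controlling the rank over the special points of $\PP^{r-1}$ for an \emph{entire} wild family, equivalently guaranteeing that the wild family never leaves the equal kernels subcategory, is the crux, and it is precisely here that the numeric hypotheses $r-1\le c\le (r-1)d$ together with the homological description of the equal kernels property are indispensable.
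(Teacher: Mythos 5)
Your overall frame (reduce to $\rep(\Gamma_r)$ via the Loewy-length-$2$ correspondence, produce a wild family inside $\EKP$, transfer back with the push-down-style module functor) matches the paper's setting, but the heart of your argument has a genuine gap, and it sits exactly where you yourself locate the crux. Wildness of $\rep(Q)$ for a large finite subquiver $Q \subseteq C_r$ only gives you a representation embedding of $\modd k\langle x,y\rangle$ into $\rep(Q)$ with \emph{uncontrolled} dimension vectors; the theorem requires a wild family all of whose indecomposables have dimension vector on the single ray $\NN\cdot(d,d+c)$ after push-down (so that the Jordan types are $[1]^{nc}[2]^{nd}$) while simultaneously lying in $\Inj$ so that the push-downs are in $\EKP$. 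You say the numerical hypotheses let one "choose a slice of dimension vectors" achieving both, but no mechanism is offered, and none is readily available: wildness restricted to a prescribed ray of dimension vectors is a much finer statement than wildness of $\rep(Q)$, and the push-down of a family supported on a fixed finite subtree gives no reason for its indecomposables to stay on a ray. (There is also the smaller point that $\pi_\lambda$ reflects isomorphisms only up to the free group action, which needs the fixed-finite-support argument to dispose of, but that part is repairable.)

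The paper closes precisely this gap by a completely different mechanism that your proposal is missing: it constructs a single \emph{regular brick} $F_{(a,b)} \in \EKP$ with $(a,b) = (d,d+c)$ (Proposition $\ref{Result1}$, Chen's explicit echelon-matrix construction -- no covering theory is involved in this range; the covering is used in the paper only for the existence results with $b > (r-1)a$), and then invokes the theorem that the filtration category $\cE(\{F_{(a,b)}\})$ over a regular brick is wild (\cite[3.1.1]{Bi1}, in the Kerner--Lukas tradition). This device resolves both of your competing demands at once and for free: every object of $\cE(\{F_{(a,b)}\})$ has dimension vector $n(a,b)$ by construction of the filtration, and since $\EKP$ is closed under extensions, the whole wild subcategory stays inside $\EKP$, hence has constant Jordan type $[1]^{nc}[2]^{nd}$. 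Without the brick-plus-filtration idea (or some substitute for it), your construction establishes only that $\rep(\Gamma_r)$ is wild -- which is already known -- and not that the specific Jordan-type subcategory is wild. Note also that the regularity of the brick, needed for \cite[3.1.1]{Bi1}, is what ties the proof to the bound $d+c \leq (r-1)d$, i.e. to the hypothesis you were trying to use only as a slope constraint on $Q$.
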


\section{Preliminaries}
\label{Preliminaries}

\subsection{Module properties}

We let $k$ be an algebraically closed field and $r \in \NN_{\geq 2}$. We denote by $\Gamma_r$ the $r$-Kronecker quiver with two vertices $1,2$ and $r$ arrows $\gamma_1,\ldots,\gamma_r \colon 1 \to 2$.\\
Let $\Char(k) = p > 0$ and denote by $E_r$ a $p$-elementary abelian group of rank $r$. Choose generators $g_1,\ldots,g_r$ and define $x_i := g_i - 1$ for all $1 \leq i \leq r$. We get an isomorphism $kE_r \cong k[X_1,\ldots,X_r]/(X^p_1,\ldots,X^p_r)$ of $k$-algebras by sending $x_i$ to $X_i + (X^p_1,\ldots,X^p_r)$ for all $i \in \{1,\ldots,r\}$. We get a functor $\fF \colon \rep(\Gamma_r) \to \modd kE_r$ by assigning to each representation $M \in \rep(\Gamma_r)$ the $kE_r$-module $\fF(M)$ with underlying vector space $M_1 \oplus M_2$ and $x_i . (m_1 + m_2) := M(\gamma_i)(m_1)$. Morphisms are defined in the obvious way. Although the functor sends the two simple objects $I_1,P_1$ of $\rep(\Gamma_r)$ to the uniquely determined simple module $k$ of $kE_r$, the functor has very nice properties:

\begin{proposition}\cite[5.1.2]{Far2}\label{FunctorESP}
Let $M \in \modd kE_r$. 
\begin{enumerate}
\item If $M$ has Loewy length $\leq 2$, then there is a representation $N \in \rep(\Gamma_r)$ such that $M \cong \fF(N)$.
\item $N \in \rep(\Gamma_r)$ is indecomposable if and only if $\fF(N)$ is indecomposable.	
\end{enumerate}
\end{proposition}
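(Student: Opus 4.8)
The plan is to unravel the definition of $\fF$ for (a) and to analyse idempotents of the endomorphism algebra for (b). Set $A := \End_{kE_r}(\fF(N))$ and $B := \End_{\rep(\Gamma_r)}(N)$. First observe that for every $N \in \rep(\Gamma_r)$ the module $\fF(N)$ satisfies $\rad \fF(N) = \sum_i x_i\fF(N) = \sum_i \im N(\gamma_i) \subseteq N_2$ and $\rad^2 \fF(N) = 0$, so $\fF(N)$ always has Loewy length $\leq 2$; this is the constraint that makes (a) plausible. For (a), given $M \in \modd kE_r$ with $\rad^2 M = 0$, I would put $N_2 := \rad M$, choose a vector space complement $N_1$ with $M = N_1 \oplus N_2$, and define $N \in \rep(\Gamma_r)$ by $N(\gamma_i)(m_1) := x_i m_1$, which is well defined since $x_i M \subseteq \rad M = N_2$. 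Since $x_i m_2 \in \rad^2 M = 0$ for $m_2 \in \rad M$, the $kE_r$-action on $M$ agrees with the one on $\fF(N)$, so $\id_M$ is a $kE_r$-isomorphism $M \cong \fF(N)$. I expect no difficulty here.

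For (b), the implication ``$\fF(N)$ indecomposable $\Rightarrow N$ indecomposable'' is immediate, since $\fF$ is additive and $\fF(N)=0$ forces $N=0$, so any nontrivial decomposition of $N$ would yield one of $\fF(N)$. The substance is the converse, and I would first describe $A$ explicitly. Writing an endomorphism of $\fF(N)=N_1\oplus N_2$ as a block matrix $g=\bigl(\begin{smallmatrix} a&b\\ c&d\end{smallmatrix}\bigr)$ and imposing commutation with each $x_i=\bigl(\begin{smallmatrix}0&0\\ N(\gamma_i)&0\end{smallmatrix}\bigr)$, a direct calculation shows that $g \in A$ exactly when $(a,d)\in B$, the map $c\in\Hom_k(N_1,N_2)$ is arbitrary, and $b\in\Hom_k(N_2,N_1)$ satisfies $\sum_i\im N(\gamma_i)\subseteq\ker b$ and $\im b\subseteq\bigcap_i\ker N(\gamma_i)$. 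Thus $A$ contains $B$ as its diagonal part, and the whole task is to control the extra maps $b$ and $c$.

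Assume now $N$ is indecomposable; I may take it non-simple, since $\fF$ sends both simple representations $(k,0)$ and $(0,k)$ to the indecomposable module $k$. Then $N_1\neq 0$ and $\bigcap_i\ker N(\gamma_i)\subsetneq N_1$, for otherwise all $N(\gamma_i)=0$ and $N$ would split into copies of the simples. The assignment $g\mapsto(a,d)$ is not multiplicative, but one checks from the block product that its defect is measured by the cross terms $(bc',cb')$, which do commute with the $N(\gamma_i)$ and hence lie in $B$. For admissible entries the first component $bc'$ has image in $\bigcap_i\ker N(\gamma_i)\subsetneq N_1$, so it is not surjective; thus $(bc',cb')$ is a non-isomorphism and lies in $\rad B$ (note $B$ is local as $N$ is indecomposable). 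This is precisely what makes $g\mapsto\overline{(a,d)}\in B/\rad B\cong k$ a well-defined algebra homomorphism $\varepsilon\colon A\to k$.

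To conclude, let $e=\bigl(\begin{smallmatrix}a&b\\ c&d\end{smallmatrix}\bigr)$ be idempotent. As $\varepsilon(e)\in\{0,1\}$ and $e\mapsto 1-e$ exchanges the two values, I may assume $(a,d)\in\rad B$, so $a$ and $d$ are nilpotent. The relation $ab+bd=b$ coming from $e^2=e$ gives $ab=b(\id-d)$, hence $a^n b=b(\id-d)^n$ for all $n$ by induction; choosing $n$ with $a^n=0$ and using that $\id-d$ is invertible forces $b=0$. The remaining relations then collapse to $a^2=a$, $d^2=d$ and $c=ca+dc$, so $a=d=0$ and then $c=0$, i.e. $e=0$. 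Hence $A$ has no nontrivial idempotents and $\fF(N)$ is indecomposable. The main obstacle is exactly the presence of the extra endomorphisms $b$ and $c$: because $\fF$ collapses the two simples it is not full, so $\fF(N)$ genuinely has more endomorphisms than $N$, and the heart of the argument is the identity $a^n b=b(\id-d)^n$, which prevents these extra maps from assembling into a nontrivial idempotent once $N$ is indecomposable.
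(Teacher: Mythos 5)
Your proof is correct, and a preliminary remark is in order: the paper itself offers no argument for this proposition --- it is quoted from Farnsteiner's lecture notes \cite[5.1.2]{Far2} --- so there is no internal proof to compare against; your write-up therefore serves as a self-contained substitute. All the steps check out. In (a), the splitting $M = N_1 \oplus \rad M$ with $N(\gamma_i)(m_1) := x_i m_1$ works precisely because $x_i\, \rad M \subseteq \rad^2 M = 0$, and agreement of the $x_i$-actions suffices since the $x_i$ generate $kE_r$ as an algebra. In (b), your block description of $A = \End_{kE_r}(\fF(N))$ is accurate ($bN(\gamma_i) = 0 = N(\gamma_i)b$, $N(\gamma_i)a = dN(\gamma_i)$, $c$ unconstrained), the multiplicativity defect $(bc',cb')$ of the diagonal projection does lie in $B = \End(N)$ (both $N(\gamma_i)bc'$ and $cb'N(\gamma_i)$ vanish), and it is a non-unit because $\im(bc') \subseteq \bigcap_i \ker N(\gamma_i) \subsetneq N_1$ --- where you correctly invoke that $N$ is non-simple indecomposable to exclude both $N_1 = 0$ and all $N(\gamma_i) = 0$, and you rightly dispose of the two simples separately, since $\fF$ collapses them to the one-dimensional module. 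One point deserves emphasis because it is where a lazier argument would fail: knowing that $\ker\varepsilon$ is a codimension-one ideal of $A$ does \emph{not} by itself exclude nontrivial idempotents (consider $k \times k$ with a projection), so the concluding computation is genuinely necessary, and your identity $a^n b = b(\id - d)^n$, combined with nilpotence of $(a,d) \in \rad B$ and invertibility of $\id - d$, is exactly the mechanism that kills $b$ and then forces $a = d = 0$ and $c = 0$. This idempotent analysis of $\End_{kE_r}(\fF(N))$ is the standard route for such push-down statements and is, in substance, what one finds in the cited source.
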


\noindent We say that $M \in \modd kE_r$ has \textsf{constant Jordan Type}   if for all $\alpha \in k^r \setminus \{0\}$ the Jordan type of the $($nilpotent$)$ operator 
$x^M_\alpha \colon M \to M, m \mapsto x_\alpha.m = (\sum^r_{i=1} \alpha_i x_i).m$ is independent of $\alpha$. The module has the \textsf{equal images property} $($\textsf{equal kernels property}$)$ if the image $($resp. kernel$)$ of $x^M_\alpha$ does not depend on $\alpha$. For the Kronecker quiver we have the following natural analogous definitions, that do not require $\Char(k) > 0$. 

\begin{Definition} 
Let $M \in \rep(\Gamma_r)$ be a representation. 
\begin{enumerate}[topsep=0em, itemsep= -0em, parsep = 0 em, label=$(\alph*)$]	
\item $M$ has \textsf{constant rank} if the rank of $M^\alpha := \sum^r_{i=1} \alpha_i M(\gamma_i)$ is independent of $\alpha \in k^r \setminus \{0\}$.
\item $M$ has the \textsf{equal images property} $($resp. \textsf{equal kernels property}$)$ if $\im M^\alpha$ $($resp. $\ker M^\alpha)$ is independent of $\alpha \in k^r \setminus \{0\}$.
\end{enumerate}
\end{Definition}

\noindent Observe that we can identify $M^\alpha \colon M_1 \to M_2$ with the nilpotent operator of degree $\leq 2$
\[x^{\fF(M)}_\alpha \colon \fF(M) \to \fF(M), m_1 \oplus m_2 \mapsto x_\alpha.(m_1 + m_2).\]

 The rank of $x^{\fF(M)}_\alpha$ does not depend on $\alpha$ if and only if $M$ has constant rank. Since the Jordan canonical form of $x^{\fF(M)}_\alpha$ has exactly $\rk(M^\alpha)$ blocks of size $2$ and $\dim_k M_1 + \dim_k M_2 - 2\rk(M^\alpha)$ blocks of size $1$, we see that the Jordan type of $x^{\fF(M)}_\alpha$ is independent of $\alpha$ if and only if $M^\alpha$ has constant rank.

 \begin{Definition}
 Let $M \in \rep(\Gamma_r)$, then $M$ has \textsf{constant Jordan type}, provided $\rk(M^\alpha)$ does not depend on $\alpha \in k^r \setminus \{0\}$.  We say that $M$ has \textsf{constant Jordan type} $[1]^c [2]^d$, provided $M$ has constant Jordan type and the Jordan canoncial form has exactly $c \in \NN_0$ blocks of size $1$ and exactly $d \in \NN_0$ blocks of size $2$.
  \end{Definition}
  
\noindent  Clearly, if $M$ has the equal kernels or the equal images property, then $M$ has constant Jordan type. 
It is also easy to see that an indecomposable and non-simple representation $M \in \rep(\Gamma_r)$ has the equal images property $($resp. equal kernels property$)$ if and only if $\im M^\alpha =M_2$ $($resp. $\ker M^\alpha = \{0\})$  for all $\alpha \in k^r \setminus \{0\}$. Therefore the following definitions make sense. 

\begin{Definition}
We define  
\[ \EKP := \{ M \in \rep(\Gamma_r) \mid \forall \alpha \in k^r \setminus \{0\} \colon \ker M^\alpha = \{0\} \}, \] 
\[ \EIP := \{ M \in \rep(\Gamma_r) \mid \forall \alpha \in k^r \setminus \{0\} \colon \im M^\alpha = M_2 \},\]
\[ \CJT := \{ M \in \rep(\Gamma_r) \mid \exists d \in \NN_0 \forall \alpha \in k^r \setminus \{0\} \colon \rk M^\alpha = d \}.\]  
\end{Definition}

\noindent We denote by $\modd_{2} kE_r$ the $kE_r$-modules of Loewy length $\leq 2$. In view of the following result, it is wellfounded to study modules of constant Jordan type in the hereditary category $\rep(\Gamma_r$).

\begin{proposition}\cite[2.1.2]{Wor1}\label{Proposition:WorchEmbedding}
Let $\Char(k) = p > 0$. For $\cX \in \{ \EIP,\EKP,\CJT\}$ the restriction of $\fF$ to $\cX$ induces a faithful exact functor $\fF_{\cX} \colon \cX \to \modd_2 kE_r$
such that 
\begin{enumerate}
\item for $\cX = \EIP$, $\fF_X$ reflects isomorphisms and the essential image consists of the modules in $\modd_2 kE_r$ that have the equal images property.
\item for $\cX = \EKP$, $\fF_X$ reflects isomorphisms and the essential image consists of the modules in $\modd_2 kE_r$ that have the equal kernels property.
\item for $\cX = \CJT$, the essential image consists of the modules in $\modd_2 kE_r$ that have constant Jordan type.
\end{enumerate}
\end{proposition}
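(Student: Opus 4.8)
The whole statement flows from the identifications noted above: since $x^{\fF(M)}_\alpha(m_1+m_2)=M^\alpha(m_1)$, the operator $x^{\fF(M)}_\alpha$ has image $\im M^\alpha\subseteq M_2$, kernel $\ker M^\alpha\oplus M_2$ and rank $\rk M^\alpha$. In particular $x_ix_j=0$ on $\fF(M)$, so $\rad^2\fF(M)=0$ and $\fF$ does land in $\modd_2 kE_r$. On morphisms $\fF(f)=f_1\oplus f_2$ is block diagonal, whence $\fF(f)=0$ forces $f=0$ and $\fF$ is faithful; and since kernels and cokernels in $\rep(\Gamma_r)$ are formed vertexwise while $\fF$ merely takes the direct sum of the two vertex spaces, $\fF$ preserves them and is exact. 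The faithful exact functor $\fF_\cX$ is then the (co)restriction of $\fF$, so it only remains to pin down the essential images and the reflection property.

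For the essential images I would prove one inclusion by transport and the other by an explicit model. If $M$ lies in $\EIP$, $\EKP$ or $\CJT$ then $\im M^\alpha=M_2$, $\ker M^\alpha=0$, or $\rk M^\alpha$ is constant, and the identifications immediately give $\fF(M)$ the equal images, equal kernels or constant Jordan property (for the last using the remark that the Jordan type of $x^{\fF(M)}_\alpha$ is determined by $\rk M^\alpha$). Conversely, given $W\in\modd_2 kE_r$, set $N:=(W/\rad W,\,\rad W,\,\overline{x_\bullet})$, where $\overline{x_i}\colon W/\rad W\to\rad W$ is induced by multiplication with $x_i$ (well defined since $x_i\rad W\subseteq\rad^2 W=0$); choosing a vector-space complement of $\rad W$ gives $\fF(N)\cong W$, reproving Proposition~\ref{FunctorESP}(a). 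If $W$ has constant Jordan type then $\rk N^\alpha=\rk x^W_\alpha$ is constant, so $N\in\CJT$; and if $W$ has equal images then $\im N^\alpha=\im x^W_\alpha=\sum_\beta\im x^W_\beta=\rad W=N_2$ for every $\alpha\neq0$, so $N\in\EIP$.

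The equal-kernels case is where I expect the only genuine work. Here the model $N$ need not lie in $\EKP$: one computes $\ker N^\alpha=\Soc W/\rad W$, and the obstruction is the number $m:=\dim_k(\Soc W/\rad W)$ of socle vectors sitting in the top. I would split these off, writing $N\cong N'\oplus I_1^{\,m}$ with $\bigcap_i\ker N'(\gamma_i)=0$, where $I_1$ is the simple injective (so $\fF(I_1)\cong k$ but $I_1\notin\EKP$). From $W\cong\fF(N')\oplus k^{\,m}$ and the equal kernels of $W$ one sees that $\ker N'^\alpha$ is independent of $\alpha$, hence equals $\bigcap_\alpha\ker N'^\alpha=\bigcap_i\ker N'(\gamma_i)=0$, so $N'\in\EKP$. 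The trick is then to trade the bad summands for good ones: the simple projective $P_1$ also satisfies $\fF(P_1)\cong k$ but lies in $\EKP$, so $\tilde N:=N'\oplus P_1^{\,m}$ belongs to $\EKP$ and still satisfies $\fF(\tilde N)\cong\fF(N')\oplus k^{\,m}\cong W$.

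For the reflection property the key point is that the vertex space $M_2$ can be recovered functorially from $\fF(M)$ once $M\in\EIP\cup\EKP$: for $M\in\EIP$ one has $M_2=\rad\fF(M)$ (because $\rad\fF(M)=\sum_i\im M(\gamma_i)=M_2$), while for $M\in\EKP$ one has $M_2=\Soc\fF(M)$ (because $\bigcap_i\ker M(\gamma_i)=0$). Hence any isomorphism $\fF(M)\xrightarrow{\sim}\fF(N)$ with $M,N\in\cX$ maps $M_2$ onto $N_2$; in block form with respect to $M_1\oplus M_2$ and $N_1\oplus N_2$ it is therefore lower triangular, and its two invertible diagonal blocks intertwine the arrows, giving an isomorphism $M\cong N$ in $\rep(\Gamma_r)$. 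Thus $\fF_\cX$ reflects isomorphisms for $\cX\in\{\EIP,\EKP\}$. I would finally note that this genuinely fails for $\CJT$, since $\fF(I_1)\cong k\cong\fF(P_1)$ while $I_1\not\cong P_1$, which is exactly why no reflection statement is claimed in part~$(c)$.
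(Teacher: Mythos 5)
Your proposal is correct, but note that the paper itself offers no proof to compare against: the proposition is imported verbatim from Worch \cite{Wor1}*{2.1.2}, so your argument has to stand on its own, and it does. The bookkeeping identities ($\im x^{\fF(M)}_\alpha=\im M^\alpha$, $\ker x^{\fF(M)}_\alpha=\ker M^\alpha\oplus M_2$, $\rk x^{\fF(M)}_\alpha=\rk M^\alpha$) are exactly right and settle faithfulness, exactness, and the ``easy'' inclusion of the essential images; your model $N=(W/\rad W,\rad W,\overline{x_\bullet})$ with a vector-space splitting correctly recovers Proposition \ref{FunctorESP}(a), and the computation $\ker N^\alpha=\ker x^W_\alpha/\rad W=\Soc W/\rad W$ under the equal-kernels hypothesis is the genuine subtlety: since $\fF$ identifies the two simples $I_1,P_1$ with $k$, the model may contain bad summands $I_1^m$, and your swap $N'\oplus I_1^m\rightsquigarrow N'\oplus P_1^m$ is precisely the right repair (with $N'\in\EKP$ because $\ker N^\alpha$ is exhausted by the $I_1^m$ summand). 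Two small remarks. First, you implicitly read ``reflects isomorphisms'' at the object level ($\fF(M)\cong\fF(N)\Rightarrow M\cong N$); that is the correct reading here, since at the morphism level reflection is trivial for the block-diagonal $\fF(f)=f_1\oplus f_2$ and would hold on all of $\rep(\Gamma_r)$, including $\CJT$ --- and your $\fF(I_1)\cong k\cong\fF(P_1)$ example is exactly why part $(c)$ carries no reflection clause. Second, in the reflection argument you should (and essentially do) note that $\varphi(M_2)=N_2$ with equality, not just inclusion, because $\rad$ and $\Soc$ are preserved by $\varphi$ and $\varphi^{-1}$ alike; with that, the lower-triangular block decomposition and the intertwining relation $\varphi_{22}M(\gamma_i)=N(\gamma_i)\varphi_{11}$ finish the proof as you say.
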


\noindent The following result provides a functorial characterization of the forementionend categories.

\begin{Theorem}\cite[2.2.1]	{Wor1}\label{Theorem:Worch}
Let $r \geq 2$. There exists a family of indecomposable representations $(X_\alpha)_{\alpha \in k^r \setminus \{0\}}$, such that the following statements hold:
\begin{enumerate}
\item $\EKP = \{ M \in \rep(\Gamma_r) \mid \forall \alpha \in k^r \setminus \{0\} \colon \Hom(X_\alpha,M) = 0 \}$.
\item $\EIP = \{ M \in \rep(\Gamma_r) \mid \forall \alpha \in k^r \setminus \{0\} \colon \Ext^1(X_\alpha,M) = 0 \}$.
\item $\CJT = \{ M \in \rep(\Gamma_r) \mid \exists c \in \NN_0 \forall \alpha \in k^r \setminus \{0\} \colon \dim_k \Hom(X_\alpha,M) = c \}$.
\end{enumerate}
\end{Theorem}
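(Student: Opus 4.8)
The plan is to exhibit the family $(X_\alpha)_{\alpha \in k^r \setminus \{0\}}$ explicitly and then reduce all three assertions to a single homological identification, namely $\Hom(X_\alpha, M) \cong \ker M^\alpha$ together with its $\Ext^1$-companion $\Ext^1(X_\alpha, M) \cong \coker M^\alpha$. First I would define, for each $\alpha \in k^r \setminus \{0\}$, the representation $X_\alpha$ with $(X_\alpha)_1 = k$, $(X_\alpha)_2 = k^r/k\alpha$ and structure maps $X_\alpha(\gamma_i) \colon k \to k^r/k\alpha$, $1 \mapsto \bar e_i$, where $e_1,\ldots,e_r$ is the standard basis of $k^r$ and $\bar{\ \cdot\ }$ denotes the residue class modulo $k\alpha$. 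Its dimension vector is $(1,r-1)$, and it is built precisely so that $\sum_i \alpha_i X_\alpha(\gamma_i) = 0$, i.e. $\alpha$ is the ``kernel direction'' of $X_\alpha$ itself; this is what will make morphisms out of $X_\alpha$ detect $\ker M^\alpha$. Computing the endomorphism ring, a morphism $g \colon X_\alpha \to X_\alpha$ is a scalar $g_1$ on $(X_\alpha)_1$ together with a map $g_2$ on $(X_\alpha)_2$ forced by $g_2(\bar e_i) = g_1 \bar e_i$ for all $i$; since the $\bar e_i$ span $k^r/k\alpha$ this gives $g_2 = g_1 \cdot \id$, so $\End(X_\alpha) = k$ and $X_\alpha$ is indecomposable.

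Next I would establish the two homological identities. The head of $X_\alpha$ is the simple $S_1$, so the projective cover is $P_1 = (k,k^r)$, and the kernel of the surjection $P_1 \twoheadrightarrow X_\alpha$ (the identity on the first vertex and the quotient by $k\alpha$ on the second) is $(0,k\alpha) \cong P_2$. This yields the short projective resolution
\[ 0 \to P_2 \to P_1 \to X_\alpha \to 0, \]
in which the inclusion $P_2 \hookrightarrow P_1$ sends the generator to $\alpha = \sum_i \alpha_i e_i \in (P_1)_2$. Applying $\Hom(-,M)$ and using $\Hom(P_1,M) \cong M_1$, $\Hom(P_2,M) \cong M_2$ and $\Ext^1(P_1,M) = 0$, the induced map $M_1 \to M_2$ is exactly $v \mapsto \sum_i \alpha_i M(\gamma_i)(v) = M^\alpha(v)$. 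Hence the long exact sequence collapses to
\[ 0 \to \Hom(X_\alpha,M) \to M_1 \xrightarrow{\,M^\alpha\,} M_2 \to \Ext^1(X_\alpha,M) \to 0, \]
giving the natural isomorphisms $\Hom(X_\alpha,M) \cong \ker M^\alpha$ and $\Ext^1(X_\alpha,M) \cong \coker M^\alpha$.

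With these in hand all three equivalences are immediate from the definitions of $\EKP$, $\EIP$ and $\CJT$. For (a), $\Hom(X_\alpha,M) = 0$ for all $\alpha$ is the same as $\ker M^\alpha = 0$ for all $\alpha$, i.e. $M \in \EKP$. For (b), $\Ext^1(X_\alpha,M) = 0$ for all $\alpha$ means $\coker M^\alpha = 0$, equivalently $\im M^\alpha = M_2$ for all $\alpha$, i.e. $M \in \EIP$. For (c), the exact sequence gives $\dim_k \Hom(X_\alpha,M) = \dim_k M_1 - \rk M^\alpha$, so this dimension is independent of $\alpha$ precisely when $\rk M^\alpha$ is, i.e. $M \in \CJT$, with $c = \dim_k M_1 - d$.

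I expect the only genuinely creative step to be the construction of $X_\alpha$; once a representation with $\sum_i\alpha_i X_\alpha(\gamma_i)=0$ and dimension vector $(1,r-1)$ is written down, both the indecomposability check and the homological computation are routine, since $k\Gamma_r$ is hereditary and the resolution has length one. A minor conceptual point worth recording is that the whole family may be obtained from a single model $X_{e_1}$ by the $\GL_r(k)$-symmetry of the arrow action, under which $M^\alpha$ is carried to $M^{g^{\mathrm T}\alpha}$; this explains why it would in principle suffice to treat $\alpha = e_1$, although the explicit definition of $X_\alpha$ above makes the reduction unnecessary.
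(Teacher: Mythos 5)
The paper offers no proof of this statement at all: it is quoted from Worch \cite[2.2.1]{Wor1}, so there is no internal argument to compare against. Your blind proof is correct and is essentially the original one: your $X_\alpha$ with $(X_\alpha)_1 = k$, $(X_\alpha)_2 = k^r/k\alpha$ and $X_\alpha(\gamma_i)(1) = \bar{e}_i$ is (up to isomorphism) Worch's representation --- consistent with the dimension vector $\dimu X_\alpha = (1,r-1)$ the paper uses later in Lemma~\ref{Lemma:DimensionVectorNotInEKP} and with $\dimu \tau_{\Gamma_r}X_\alpha = (r-1,1)$ in Proposition~\ref{Result3} --- and the four-term exact sequence $0 \to \Hom(X_\alpha,M) \to M_1 \to M_2 \to \Ext^1(X_\alpha,M) \to 0$ extracted from the length-one projective resolution, identifying $\Hom(X_\alpha,M) \cong \ker M^\alpha$ and $\Ext^1(X_\alpha,M) \cong \coker M^\alpha$, is exactly the mechanism behind the functorial characterization; your brick computation $\End(X_\alpha)=k$ settles indecomposability, and all three items then follow from the paper's definitions of $\EKP$, $\EIP$ and $\CJT$ precisely as you say. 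One cosmetic caveat: your labelling of the projectives clashes with this paper's conventions, where $\dimu P_1 = (0,1)$ and $\dimu P_2 = (1,r)$, so in the paper's notation your resolution should read $0 \to P_1 \to P_2 \to X_\alpha \to 0$.
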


\subsection{The Auslander-Reiten quiver of $\Gamma_r$} \label{Section:ARquiver}

Recall that for $r \geq 2$ there are infinitely many isomorphism classes of indecomposable representations of $\Gamma_r$. We denote by $\tau_{\Gamma_r}$ the Auslander-Reiten translation of $\Gamma_r$. The indecomposable representations fall into three classes: an indecomposable representation $M$ is called \textsf{preprojective} $($\textsf{preinjective}$)$ if and only if $M$ is in the $\tau_{\Gamma_r}$-orbit of a projective $($injective$)$ indecomposable representation.  All other indecomposable representations are called \textsf{regular}. We call a representation $M \in \rep(\Gamma_r)$ \textsf{preprojective} $($\textsf{preinjective}, \textsf{regular}$)$ if all indecomposable direct summands of $M$ are preprojective $($resp. preinjective, regular$)$.
There are up to isomorphism two indecomposable projective representations $P_1,P_2$, two injective representations $I_1,I_2$ and two simple representations $I_1,P_2$.
  We define $P_{i+2} := \tau^{-1}_{\Gamma_r} P_i$ and $I_{i+2} := \tau_{\Gamma_r} I_i$ for all $i \in \NN$.
 The Auslander-Reiten quiver of $\Gamma_r$  quiver looks as follows:
\[ 
\xymatrix @C=10pt@R=15pt{
& P_2 \ar^{r}[dr] \ar@{..}[rrrr]&  & P_4 \ar^{r}[dr] &   & &   & &  &  &  & \ar@{..}[rrrrr] & I_5 \ar^{r}[dr]  & & I_3 \ar^{r}[dr]& &  I_1\\
P_1   \ar@{..}[rrrrr] \ar^{r}[ur]&  & P_3 \ar^{r}[ur] & & P_5 \ar^{r}[ur]  & &  & & & & & \ar^{r}[ur] \ar@{..}[rrrr]  & & I_4 \ar^{r}[ur] & & I_2 \ar^{r}[ur] \\
& & & & & & & & & & & & & & & &
\save "1,7"."2,11"*[F]\frm{} \restore  
\save "3,2"."3,7" *\txt{preinjective} \restore
\save "3,9"."3,10" *\txt{regular} \restore
\save "3,16"."3,17" *\txt{preprojective} \restore}
\] 
The arrows in the components that contain all preinjective and preprojective indecomposable representations are $r$-folded. They have dimension vectors $\dimu P_1 = (0,1)$, $\dimu P_2 = (1,r)$, $\dimu I_1 = (1,0)$ and $\dimu I_2 = (r,1)$. Moreover, we have $\dimu X_{i+2} = r \dimu X_{i+1} - \dimu X_i$ for all $i \in \NN$ and $X \in \{P,I\}$. We conclude with induction:

\begin{Lemma}\label{Lemma:PropertiesARquiver}The following statements hold.
\begin{enumerate} 
\item We have $(\dimu P_i)_1 < (\dimu P_i)_2$ and $(\dimu I_i)_1 > (\dimu I_i)_2$ for all $i \in \NN$.
\item Each non-zero preprojective representation $X$ satisfies $\dim_k X_1 < \dim_k X_2$.
\item Each non-zero preinjective representation $Y$ satisfies $\dim_k Y_1 > \dim_k Y_2$.
\item We have $(\dimu I_i)_j  < (\dimu I_{i+1})_{j}$ for all $i \in \NN$ and $j \in \{1,2\}$. 
\end{enumerate}
\end{Lemma}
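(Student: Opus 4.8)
The plan is to prove Lemma~\ref{Lemma:PropertiesARquiver} by induction, using the recursion $\dimu X_{i+2} = r\,\dimu X_{i+1} - \dimu X_i$ together with the four base dimension vectors $\dimu P_1 = (0,1)$, $\dimu P_2 = (1,r)$, $\dimu I_1 = (1,0)$, $\dimu I_2 = (r,1)$. The natural strategy is to handle the preprojective statement for $P_i$ first (part of~$(a)$), deduce the preinjective statement for $I_i$ from it by the duality $\dimu I_i = ((\dimu P_i)_2,(\dimu P_i)_1)$, and then read off $(b)$ and $(c)$ as immediate corollaries; part~$(d)$ is a separate but similar induction.

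\medskip

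\noindent\textbf{Step 1 (the $P_i$ half of $(a)$).} First I would set $a_i := (\dimu P_i)_1$ and $b_i := (\dimu P_i)_2$, so the claim is $a_i < b_i$ for all $i$. The recursion gives $a_{i+2} = r\,a_{i+1} - a_i$ and $b_{i+2} = r\,b_{i+1} - b_i$, with $(a_1,b_1) = (0,1)$ and $(a_2,b_2) = (1,r)$. Setting $\delta_i := b_i - a_i$, subtraction yields $\delta_{i+2} = r\,\delta_{i+1} - \delta_i$ with $\delta_1 = 1$ and $\delta_2 = r-1 \geq 1$. A strong induction then shows $\delta_i \geq 1$: assuming $\delta_{i+1} \geq \delta_i \geq 1$, we get $\delta_{i+2} = r\,\delta_{i+1} - \delta_i \geq r\,\delta_{i+1} - \delta_{i+1} = (r-1)\delta_{i+1} \geq \delta_{i+1} \geq 1$, which simultaneously reproves positivity and the monotonicity needed to carry the induction. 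This settles $a_i < b_i$, and the $I_i$ inequality $(\dimu I_i)_1 > (\dimu I_i)_2$ follows from the symmetry of the two rows of the Auslander--Reiten quiver (equivalently, from $\dimu I_i$ satisfying the same recursion with swapped base cases $(1,0)$ and $(r,1)$).

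\medskip

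\noindent\textbf{Step 2 (parts $(b)$ and $(c)$).} Every non-zero preprojective $X$ is a direct sum of copies of the $P_i$, so $\dim_k X_1 = \sum (\dimu P_i)_1 < \sum (\dimu P_i)_2 = \dim_k X_2$, using Step~1 termwise; part~$(c)$ is the dual statement for preinjectives, argued identically with the inequality reversed.

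\medskip

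\noindent\textbf{Step 3 (part $(d)$).} For the monotonicity $(\dimu I_i)_j < (\dimu I_{i+1})_j$ I would argue that the sequences $(\dimu I_i)_1$ and $(\dimu I_i)_2$ are strictly increasing. Writing $c_i := (\dimu I_i)_j$ for a fixed coordinate $j$, the recursion $c_{i+2} = r\,c_{i+1} - c_i$ with $r \geq 2$ gives $c_{i+2} - c_{i+1} = (r-1)c_{i+1} - c_i \geq c_{i+1} - c_i$, so the gaps are non-decreasing; since the first gap is positive (checked from the base cases $(1,0),(r,1)$), all gaps stay positive and the sequence is strictly increasing.

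\medskip

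\noindent The only mild obstacle is bookkeeping: one must verify the base cases for each of the three inductions and confirm that the strong-induction hypothesis is exactly what the recursion propagates. The key quantitative input throughout is $r \geq 2$, which makes the coefficient $r-1$ at least $1$ and thereby converts each one-step recursion into a monotonicity statement; no deeper structural facts about $\rep(\Gamma_r)$ are needed beyond the dimension-vector recursion already recorded before the lemma.
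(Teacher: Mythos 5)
Your proof is correct and follows essentially the same route as the paper, which states the recursion $\dimu X_{i+2} = r\,\dimu X_{i+1} - \dimu X_i$ with the base cases $\dimu P_1=(0,1)$, $\dimu P_2=(1,r)$, $\dimu I_1=(1,0)$, $\dimu I_2=(r,1)$ and simply remarks ``we conclude with induction''; your write-up supplies exactly that induction, correctly strengthening the hypothesis to monotonicity of the differences (resp.\ gaps) so that $r\geq 2$ propagates positivity. The only point worth making explicit is that in Step~3 the inequality $(r-1)c_{i+1}-c_i \geq c_{i+1}-c_i$ uses $c_{i+1}\geq 0$, which is automatic since the $c_i$ are coordinates of dimension vectors (or can be carried along in the same induction).
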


\subsection{Kac's Theorem}

\label{Section:Kac's Theorem}

\noindent In this section we recall a theorem of Kac and prove the existence of certain roots that will be needed later on. The field $k$ is of abitrary characteristic. For a more detailed description we refer the reader to \cite{Kac1}, \cite{Kac2} and \cite{CB1}.

 Let $Q$ be an acyclic quiver without loops with finite vertex set $Q_0 = \{1,\ldots,n\}$. For $x \in Q_0$ we define $x^+_Q := \{ y \in Q_0 \mid \exists x \to y\}$, $x^-_Q := \{ y \in Q_0 \mid \exists y \to x \}$ and $n_Q(x) := x^+ \cup x^-$. The quiver $Q$ defines a $($non-symmetric$)$ bilinear form $\langle \ , \ \rangle_Q \colon \ZZ^n \times \ZZ^n \to \ZZ$, given by 
\[ (x,y) \mapsto \sum^n_{i=1} x_i y_i - \sum_{i \to j \in Q_1} x_i y_j,\]
which coincides with the \textsf{Euler-Ringel form} on the Grothendieck group of $Q$, i.e. for $X,Y \in \rep(Q)$ we have 
\[ \langle \dimu X, \dimu Y \rangle_Q = \dim_k \Hom_Q(X,Y) - \dim_k \Ext^1_Q(X,Y).\]
The \textsf{Tits form} is defined by $q_Q(x) := \langle x,x \rangle$. We denote the \textsf{symmetric form} corresponding to $\langle \ , \ \rangle_Q$ by $( \ , \ )_{Q}$, i.e. $(x,y)_Q := \langle x,y \rangle_Q + \langle y,x \rangle_Q$.

\noindent For each $i \in Q_0$ we have an associated reflection $r_i \colon \ZZ^n \to \ZZ^n$ given by $r_i(x) := x - e_i (x,e_i)_Q$, where $e_i \in \ZZ^n$ denotes the $i$-th canonical basis vector. By definition we have
\[ r_i(x)_j = \begin{cases}
x_j,\text{for} \ j \neq i \\ 
-x_i + \sum_{l \to i, i \to l} x_l,\text{for} \ j = i.
\end{cases}
\]
We denote by $W_Q := \langle r_i \mid i \in \{1,\ldots,n\} \rangle$ the \textsf{Weyl group} associated to $Q$ and by $\Pi_Q := \{e_1,\ldots,e_n\}$  the set of \textsf{simple roots}. The set
\[F_Q := \{ \alpha \in \NN^n_0 \setminus \{0\} \mid \forall i \in \{1,\ldots,n\} \colon (\alpha,e_i)_Q \leq 0, \supp(\alpha) \ \text{is connected}\}\] is called the \textsf{fundamental domain} of the Weyl group action.

\begin{Definition}
We define 
\[ \Delta_+(Q) = \Delta^{\re}_+(Q)\sqcup \Delta^{\im}_+(Q),\]
where $\Delta^{\re}_+(Q) := W_Q \Pi_Q \cap \NN^n_0$ and $\Delta^{\im}_+(Q) := W_Q F_Q$.
The elements in $\Delta_+(Q)$ are called \textsf{(positive) roots} of $q_Q$.
\end{Definition}

\noindent We formulate a simplified version of Kac's Theorem that suffices for our purposes.

\begin{Theorem}[Kac's Theorem]  \label{Theorem:Kac}\cite[Theorem B]{Kac3}, \cite[Theorem \S 1.10]{Kac2}
Let $k$ be an algebraically closed field and $Q$ an acyclic finite, connected quiver without loops and vertex set $\{1,\ldots,n\}$. Let $\alpha \in \NN^n_0$.

\begin{enumerate}
\item There exists an indecomposable representation in $\rep(Q)$ with dimension vector $\alpha$ if and only if $\alpha \in \Delta_+(Q)$.
\item If $\alpha \in \Delta^{\re}_+(Q)$, then there exists a unique	 indecomposable representation $M_\alpha \in \rep(Q)$ with $\dimu M = \alpha$.
\end{enumerate} 
\end{Theorem}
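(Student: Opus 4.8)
The plan is to treat the two assertions separately, beginning with the sharper statement (2) about real roots, which is accessible by elementary means, and then addressing the general root--indecomposable correspondence (1), whose hard direction rests on a reduction to counting over finite fields.

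For part (2), I would use the Bernstein--Gelfand--Ponomarev reflection functors. For a sink $i$ of $Q$ let $\sigma_i Q$ be the quiver obtained by reversing every arrow incident with $i$; the reflection functor $S^+_i \colon \rep(Q) \to \rep(\sigma_i Q)$ (and its dual $S^-_i$ at a source) restricts to a bijection between the indecomposables of $\rep(Q)$ different from the simple $S_i$ at $i$ and the indecomposables of $\rep(\sigma_i Q)$ different from $S_i$, acting on dimension vectors by the reflection $r_i$. Given a positive real root $\alpha$, I would repeatedly use the combinatorial fact that, as long as $\alpha$ is not simple, there is an admissible vertex $i$ with $(\alpha,e_i)_Q > 0$, so that $r_i(\alpha)$ is again a positive real root of strictly smaller total dimension; after finitely many steps $\alpha$ is carried to a simple root $e_j$ of the reoriented quiver. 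Running the corresponding inverse reflection functors, starting from the unique indecomposable $S_j$ of dimension $e_j$, then produces an indecomposable of dimension $\alpha$ back in $\rep(Q)$, and uniqueness is inherited step by step from the uniqueness of $S_j$ because each functor is a bijection on the relevant indecomposables. In particular this settles the existence half of (1) for real roots.

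For the imaginary roots in part (1), i.e.\ $\alpha \in \Delta^{\im}_+(Q) = W_Q F_Q$, I would again use the reflection functors to reduce to the case $\alpha \in F_Q$: a sequence of reflections carrying $\alpha$ into the fundamental domain transports an indecomposable of the reduced dimension vector back to one of dimension $\alpha$. It therefore remains to produce an indecomposable of dimension $\alpha$ when $\alpha$ lies in the fundamental domain and has connected support, together with the converse implication that an indecomposable can occur only in dimensions lying in $\Delta_+(Q)$. Both are governed by the geometry of the representation space $R(Q,\alpha) := \bigoplus_{a\colon i \to j} \Hom(k^{\alpha_i},k^{\alpha_j})$, on which $G(\alpha) := \prod_i \GL_{\alpha_i}(k)$ acts with orbits the isomorphism classes, and by the identities $\dim G(\alpha) - \dim \mathcal{O}_M = \dim \End(M)$ and $\dim R(Q,\alpha) - \dim \mathcal{O}_M = \dim \Ext^1(M,M)$, whose difference recovers $q_Q(\alpha) = \dim \End(M) - \dim \Ext^1(M,M)$.

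The substantial input --- and the step I expect to be the main obstacle --- is Kac's theorem that over a finite field $\FF_q$ the number $A_\alpha(q)$ of absolutely indecomposable representations of dimension $\alpha$ is given by a polynomial in $q$ with integer coefficients, independent of the orientation, and that $A_\alpha \not\equiv 0$ precisely when $\alpha \in \Delta_+(Q)$. I would establish this via the Burnside orbit count over $\FF_q$, relating the count of all representations to that of the indecomposables through an exponential (plethystic) identity on the associated generating functions, and controlling the outcome with the dimension formula above. Finally, to pass from $\overline{\FF_p}$ to the arbitrary algebraically closed field $k$, I would use that the locus of indecomposable (equivalently, local-endomorphism-algebra) representations in $R(Q,\alpha)$ is constructible and that its nonemptiness in one characteristic propagates, via a spreading-out and specialization argument over $\ZZ$, to every algebraically closed field; combined with the polynomiality this yields a nonempty indecomposable locus in dimension $\alpha$ over $k$ exactly when $\alpha$ is a positive root.
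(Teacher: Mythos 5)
First, note that the paper does not prove this statement at all: Kac's Theorem is imported as background, with the proof delegated to the cited sources \cite{Kac3}*{Theorem B} and \cite{Kac2}*{\S 1.10}. So there is no internal proof to compare against; your proposal is in effect an outline of Kac's own argument (reflection functors, counting absolutely indecomposable representations over $\FF_q$, polynomiality and orientation-independence of $A_\alpha(q)$, then transfer to arbitrary algebraically closed fields by constructibility and spreading out over $\ZZ$), which is exactly the route taken in the references the paper cites. As a strategy it is the right one, but as written it has a genuine gap and one large deferred step.

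The gap is in your ``elementary'' treatment of part (2). You assert that whenever $\alpha$ is a non-simple positive real root there is an \emph{admissible} vertex $i$ (a sink or source) with $(\alpha,e_i)_Q>0$. Positivity of $q_Q(\alpha)=1$ only gives \emph{some} vertex $i$ with $(\alpha,e_i)_Q>0$; nothing forces that vertex to be a sink or source in the given orientation, and the Bernstein--Gelfand--Ponomarev functors $S_i^{\pm}$ are only defined at admissible vertices. Nor can you simply reorient: reflections at sinks do not realize arbitrary reorientations once the underlying graph contains a cycle (for an acyclic orientation of a cycle, the winding number is invariant under sink reflections), and even when a sequence of sink reflections makes $i$ admissible, it transports the dimension vector by the corresponding Weyl group element, so the inequality $(\alpha,e_i)_Q>0$ and your descending induction on total dimension are not preserved ``step by step.'' This is precisely why Kac does \emph{not} prove part (2) by reflection functors alone: he first shows, over finite fields, that the number of absolutely indecomposable representations of dimension $\alpha$ depends only on the underlying graph (Burnside counting of all representations plus Krull--Schmidt), which lets him reorient freely before each reflection and deduce $A_\alpha=A_{r_i\alpha}$; uniqueness for real roots over an algebraically closed field then comes out of the counting/constructibility machinery, not from a chain of categorical bijections. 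So your part (2) secretly depends on the part (1) apparatus. Separately, the sentence ``I would establish this via the Burnside orbit count \dots{} exponential (plethystic) identity'' defers the actual mathematical heart of the theorem: polynomiality of $A_\alpha(q)$, its Weyl-group invariance, and the equivalence $A_\alpha\not\equiv 0 \Leftrightarrow \alpha\in\Delta_+(Q)$ (together with the fundamental-domain existence argument via the dimension count $\dim R(Q,\alpha)-\dim G(\alpha)\cdot$orbit estimates for the decomposable locus) constitute essentially all of \cite{Kac3}*{Theorem B}. Your final spreading-out step from $\overline{\FF}_p$ to arbitrary $k$ is correct in outline, since the locus of representations with local endomorphism algebra is constructible over $\ZZ$ and nonemptiness at all closed points of $\Spec \ZZ$ forces nonemptiness at the generic point --- but for a citation-level theorem like this, the honest conclusion is that your proposal reproduces Kac's strategy while leaving its two hard steps (orientation-independence of the count, and the count--root correspondence) unproved, and its one claimed shortcut (part (2) by reflection functors alone) does not go through as stated.
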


\begin{Example}\label{Examples} \normalfont
We consider the Kronecker quiver $\Gamma_r$ with $r$ arrows. For $x  \in \ZZ^{2}$ we have 
\[q_{\Gamma_r}((x_1,x_2)) = x_1^2 + x_2^2 -rx_1x_2,\] hence
\[ F_{\Gamma_r} = \{(a,b) \in \NN^2 \mid \frac{2}{r} \leq \frac{b}{a}  \leq \frac{r}{2} \}.\] 
Moreover one can show $($\cite[Section 2.6]{Kac1},\cite[2]{BoChen1}$)$ that $\Delta^{\re}_+(\Gamma_r) = \{ (a,b) \in \NN^2_0 \mid q_{\Gamma_r}(a,b) = 1 \}$ and  $\Delta^{\im}_+(\Gamma_r) = \{ (a,b) \in \NN^2 \mid q_{\Gamma_r}(a,b) \leq 0 \}$. Hence direct computation shows
\[ \boxed{
\Delta^{\im}_+(\Gamma_r)  = \{ (a,b) \in \NN^2 \mid q_{\Gamma_r}(a,b) \leq 0 \}   =  \begin{cases} \{ (a,a) \mid a \in \NN \}, r = 2 \\ 
  \{(a,b) \in \NN^2 \mid (\frac{r-\sqrt{r^2-4}}{2}) < \frac{b}{a} <  (\frac{r+\sqrt{r^2-4}}{2})\}, r \geq 3.
  \end{cases} }
 \]
 We also have for $M \in \rep(\Gamma_r)$ indecomposable the equivalence $($see \cite[2]{BoChen1}$)$
 \[ \boxed{M \ \text{regular} \Leftrightarrow q_{\Gamma_r}(\dim_k M_1,\dim_k M_2) < 1.}\]
\noindent We will use these results often later on.
 
\end{Example}

\subsection{Subquivers of $C_r$}

 Consider the universal cover $C_r$ of the quiver $\Gamma_r$. The underlying graph of $C_r$ is an $($infinite$)$ $r$-regular tree and $C_r$ has bipartite orientation. That means each vertex $x \in (C_r)_0$ is a sink or a source and $|n_{C_r}(x)| = r$.

Let $a \in \NN_{\geq 1}$ and $Q(a) \subseteq C_r$ be a connected subquiver with $a$ sources such that $n_{C_r}(x) \subseteq Q(a)_0$ for each source $x \in Q(a)_0$. It is easy to see that $Q(a)_0$ contains exactly $b := a(r-1)+1$ sinks. We call such a quiver \textsf{source-regular} with $a$ sources. Note that two source-regular quivers with $a$ sources are in general not isomorphic if $a \geq 3$.
We label the sources of $Q(a)_0$ by $1,\ldots,a$ and define $n_a(x) := n_{Q(a)}(x)$ for all $x$ in $Q(a)_0$. Recall that a vertex $x \in Q(a)_0$ is called \textsf{leaf} if $|n_a(x)| \leq 1$.

\vspace{0.5cm}

\noindent The following results will be needed later.

\begin{Lemma}\label{Lemma:ImaginaryRoot}
Let $a \geq 1$, $Q(a)$ be source-regular with $a$ sources, $\alpha \in \NN_0^{{Q(a)}_0}$ such that $\alpha_i = 1$ for all $i \in \{1,\ldots,a\}$, $\supp(\alpha) = {Q(a)}_0$ and for each sink $l$ we have $\alpha_l \leq \max\{1,|n_a(l)| - 1\}$. Then $\alpha \in \Delta_+(Q(a))$.
\end{Lemma}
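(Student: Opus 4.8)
The plan is to prove directly that $\alpha$ is a positive root by running the reduction algorithm that underlies Kac's Theorem (Theorem \ref{Theorem:Kac}): I will produce a sequence of reflections $r_{i_1},\dots,r_{i_m}\in W_{Q(a)}$ so that $\gamma:=r_{i_m}\cdots r_{i_1}(\alpha)$ stays in $\NN_0^{Q(a)_0}$ throughout and is either a simple root $e_i$ or an element of the fundamental domain $F_{Q(a)}$. Since $F_{Q(a)}\subseteq\Delta^{\im}_+(Q(a))$ and $\Pi_{Q(a)}\subseteq\Delta^{\re}_+(Q(a))$, in either case $\gamma\in\Delta_+(Q(a))$. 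The set of roots $\Delta(Q(a)):=\Delta_+(Q(a))\cup(-\Delta_+(Q(a)))$ is stable under $W_{Q(a)}$ and satisfies $\Delta(Q(a))\cap\NN_0^{Q(a)_0}=\Delta_+(Q(a))$; as $\alpha=r_{i_1}\cdots r_{i_m}(\gamma)\in\Delta(Q(a))$ is a nonzero element of $\NN_0^{Q(a)_0}$, it follows that $\alpha\in\Delta_+(Q(a))$.

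The reflections are controlled by the defect $(\beta,e_i)_{Q(a)}=2\beta_i-\sum_{j\in n_a(i)}\beta_j$. I will run the algorithm on a shrinking chain of connected full subquivers $Q'\subseteq Q(a)$ (the successive supports of $\beta$), maintaining the invariant that every source of $Q'$ carries value $1$, that every sink $l$ of $Q'$ satisfies $\beta_l\le\max\{1,|n_{Q'}(l)|-1\}$, and that $\supp\beta=Q'_0$ is a connected subtree; note that the original $\alpha$ satisfies all three by hypothesis. Because $\beta$ is supported on $Q'_0$ we may compute $(\beta,e_i)_{Q(a)}=2\beta_i-\sum_{j\in n_{Q'}(i)}\beta_j$ for $i\in Q'_0$. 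Under the invariant a source $s$ has defect $2-\sum_{j}\beta_j\le0$ unless $s$ is a leaf of $Q'$ whose unique neighbour $l$ has $\beta_l=1$, while a sink $l$ has defect $2\beta_l-|n_{Q'}(l)|$, so that a leaf sink (where $|n_{Q'}(l)|=1$ forces $\beta_l=1$) always has defect $1$. Hence the only vertices of positive defect are: (i) interior sinks $l$ with $\beta_l>|n_{Q'}(l)|/2$; (ii) leaf sinks; and (iii) leaf sources adjacent to a value-$1$ sink.

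The induction is on the height $\sum_v\beta_v$. If no vertex has positive defect, then $\beta\in F_{Q(a)}$ and I stop; if $Q'$ has a single vertex, then $\beta=e_i$ and I stop. Otherwise I reflect at a positive-defect vertex. Reflecting at a type-(i) sink replaces $\beta_l$ by $|n_{Q'}(l)|-\beta_l$, which lies in $\{1,\dots,|n_{Q'}(l)|-1\}$ by the invariant; this keeps $\beta$ positive, preserves the invariant, and strictly lowers the height. Reflecting at a type-(ii) or type-(iii) vertex sends its value to $-\beta_i+\beta_{\mathrm{nbr}}=-1+1=0$, i.e.\ deletes a leaf of the subtree, so $Q'$ shrinks to a connected subtree and the height drops. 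In every case $\beta$ stays in $\NN_0^{Q(a)_0}$, the sequence is admissible, and since the height strictly decreases it terminates, yielding the required $\gamma$.

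The delicate point — and the reason the hypothesis carries precisely the bound $\max\{1,|n_a(l)|-1\}$ — is the bookkeeping when a leaf is deleted. Deleting a leaf sink lowers the current degree of the adjacent source, which is harmless since sources are unconstrained. Deleting a leaf source $s$ lowers the current degree of its surviving neighbour sink $l$ by one, which \emph{tightens} the bound $\beta_l\le\max\{1,|n_{Q'}(l)|-1\}$; the crucial observation is that $s$ is deleted only when its defect is positive, i.e.\ only when $\beta_l=1$, and a sink of value $1$ satisfies any tightened bound automatically, so the invariant survives each degree drop. This same bound is exactly what makes the type-(i) reflections non-negative and forces every leaf sink to have value $1$ (so it reflects to $0$ rather than to a negative entry); without it the reduction could leave $\NN_0^{Q(a)_0}$ and $\alpha$ need not be a root. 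I expect maintaining this invariant through the cascade of leaf deletions to be the main obstacle; once it is secured, termination in $F_{Q(a)}$ or at a simple root is automatic and the first paragraph completes the proof.
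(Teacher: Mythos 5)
Your proof is correct, and it takes a genuinely different route from the paper's. You run the classical reduction algorithm entirely inside the Weyl group: induct on the height $\sum_v \beta_v$, reflect only at vertices of positive defect, and maintain the invariant (sources of the current support carry value $1$, every sink $l$ satisfies $\beta_l \leq \max\{1,|n_{Q'}(l)|-1\}$, and the support is a connected subtree) until the vector lands in $\Pi_{Q(a)}$ or $F_{Q(a)}$. Your case analysis of positive-defect vertices is exhaustive, and the key bookkeeping point --- a leaf source is reflected away only when its neighbouring sink has value $1$, so the tightened sink bound survives each degree drop --- is exactly the right observation; likewise the bound $\beta_l \leq |n_{Q'}(l)|-1$ at interior sinks is what keeps type-(i) reflections in $\NN_0$ and strictly height-decreasing. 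The paper argues differently, by induction on the number $a$ of sources: it picks a source $x$ whose neighbourhood contains $r-1$ leaves and one non-leaf sink $y$, and either (when $\alpha_y = |n_a(y)|-1$) reflects away the leaves, then $y$, then $x$, reducing to the source-regular subquiver $Q^\prime$ with $a-1$ sources, or (when $\alpha_y \leq |n_a(y)|-2$) restricts $\alpha$ to $Q^\prime$, uses the inductive hypothesis together with Kac's Theorem $\ref{Theorem:Kac}$ to produce an indecomposable representation of $Q^\prime$, and extends it by hand (one-dimensional spaces at $x$ and the leaves, an injective map into $M_y$) to an indecomposable representation of $Q(a)$ with dimension vector $\alpha$, invoking Kac's Theorem a second time in the reverse direction. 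Your approach buys self-containedness: it never leaves the root-combinatorics layer and needs Kac's Theorem only through the definitions of $\Delta^{\re}_+$ and $\Delta^{\im}_+$ (if the terminal vector lies in $F_{Q(a)}$ then $\alpha \in W_{Q(a)}F_{Q(a)}$ by definition; if it is simple then $\alpha \in W_{Q(a)}\Pi_{Q(a)} \cap \NN_0^{Q(a)_0}$), and as a by-product it decides whether $\alpha$ is real or imaginary; the paper's version buys brevity at the cost of round-tripping through representation theory twice inside the induction. One presentational nit: your closing sentence frames the invariant maintenance as an anticipated obstacle, but your second and third paragraphs already verify it in full, so the proof is complete rather than conditional, and you should state it as such.
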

\begin{proof}
Observe that by assumption we have $\alpha_l = 1$ for each leaf $l \in Q(a)_0$.
We prove the statement by induction on $a \in \NN$. For $a = 1$, the vertex set of $Q(a)$ consists exactly of one source $a$ and its neighbourhood $\{y_1,\ldots,y_r\}$. By assumption $\alpha_i = 1$ for all $i \in Q(a)_0$. We set $\beta := r_{y_r} \circ \cdots \circ r_{y_1}(\alpha)$ and get that $\beta_z = \delta_{z a}$. Hence $\beta \in \Pi_{Q(a)}$ and $\alpha \in \Delta_+(Q(a))$. \\
Now let $a > 1$. Since $Q(a)$ is a tree and $a > 1$, we find a source $x \in Q(a)_0$ such that $n_a(x) = \{y_1,\ldots,y_{r-1},y\}$, $y_1,\ldots
,y_{r-1}$ are leaves and $|n_a(y)| > 1$.
We assume without loss of generality that $x = a$. We let $Q^\prime$ be the full subquiver of $Q(a)$ with vertex set $Q(a)_0 \setminus \{a,y_1,\ldots,y_{r-1}\}$. The quiver $Q^\prime$ is a tree and source-regular with $a-1$ sources. We distinguish two cases:\\
If $\alpha_y = |n_a(y)| - 1$, we set 
$\beta := r_{y_{r-1}} \circ \cdots \circ r_{y_1}(\alpha)$. Then $\beta$ satisfies $\beta_z = 0$ if $z \in \{y_1,\ldots,y_{r-1}\}$ and $\beta_z = \alpha_z$ otherwise. Hence $r_y(\beta)_y = -\beta_y + \sum_{z \to y} \beta_z = -\alpha_y + 1 \cdot |n_a(y)| = 1$, which implies that 
\[ (r_a r_y(\beta))_z = \begin{cases}
\alpha_z, \ \text{for} \ z \not\in \{a,y_1,\ldots,y_{r-1},y\} \\
1, \ \text{for} \ z = y\\
0, \ \text{else}.
\end{cases}
\]
Hence $\alpha^\prime := r_a r_y(\beta)$ satisfies $\supp(\alpha^\prime) = Q^\prime_0$ and  $\alpha^\prime_l \leq \max \{1, |n_{Q^\prime}(l)|-1\}$ for each sink $l \in Q^\prime_0$. The inductive assumption implies that $\alpha^\prime \in \Delta_+(Q^\prime)$. By $\ref{Theorem:Kac}$ there is an indecomposable representation $M^\prime$ of $Q^\prime$ with dimension vector $\alpha^\prime$. Since $M^\prime$ is an indecomposable representation for $Q(a)$, we conclude with $\ref{Theorem:Kac}$ that $\alpha^\prime \in \Delta_+(Q(a))$. Hence $\alpha \in \Delta_+(Q(a))$.\\
Now assume that $\alpha_y \leq |n_a(y)| - 2$. We consider $\beta \in \NN^{Q^\prime_0}_0$ with $\beta_i := \alpha_i$ for all $i \in Q^\prime_0$. We have then $\beta_y \leq |n_{Q^\prime}(y)| - 1$. By assumption $\beta \in \Delta_+(Q^\prime)$ and Kac's Theorem implies the existence of an indecomposable representation $M \in \rep(Q^\prime)$ with $\dimu M = \beta$. We define an indecomposable representation $N \in \rep(Q(a))$ by setting $N_{\mid Q^\prime} := M$, $N_z := k$ for all $z \in \{y_1,\ldots,y_{r-1},a\}$, $N(a \to y_{i}) := \id_k$ for all $1 \leq i \leq r-1$ and we let $N(a \to y) \colon k \to M_y$ be an injective $k$-linear map. By construction we have $\dimu N = \alpha$. Hence $\alpha \in \Delta_+(Q(a))$ by Kac's Theorem.
\end{proof}

\begin{Lemma}\label{Lemma:SourceRegularQuiver}
Let $n \in \NN$, then there exists a connected subquiver $\mathcal{Q}(n) \subseteq C_r$ such that $\mathcal{Q}(n)$ has the following properties:
\begin{enumerate}
\item $\mathcal{Q}(n)$ is source-regular with $n$ sources.
\item There is at most one sink $y  \in \mathcal{Q}(n)_0$ such that $|n_{C_r}(y) \cap \mathcal{Q}(n)_0| \not \in \{1,r\}$.
\end{enumerate}
\end{Lemma}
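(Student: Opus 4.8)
The plan is to build $\mathcal{Q}(n)$ by induction on $n$, adding one source at a time while maintaining the invariant that \emph{at most one sink is partially filled}, i.e.\ that at most one sink $y$ of the quiver constructed so far satisfies $1 < |n_{C_r}(y) \cap \mathcal{Q}_0| < r$. Before starting I would record one structural observation: since every edge of $C_r$ joins a source to a sink and each source of a source-regular quiver carries all of its $r$ edges, such a quiver is automatically full, so $|n_{C_r}(y) \cap \mathcal{Q}_0|$ equals the number of sources of $\mathcal{Q}$ adjacent to the sink $y$. This is exactly the quantity controlled by the invariant and the quantity appearing in condition $(b)$, so the invariant directly yields $(b)$.

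For the base case $n = 1$ I would take a single source $x$ together with its $r$ neighbours $n_{C_r}(x)$; this is source-regular with one source and all of its sinks are leaves, so the invariant holds with \emph{no} partial sink. For the inductive step, suppose $\mathcal{Q}(k)$ has been built. If $\mathcal{Q}(k)$ has no partial sink, I would pick a leaf sink $y$ (one exists: all sinks are leaves when $k=1$, and otherwise the last step created $r-1$ fresh leaf sinks), choose a source $s \in n_{C_r}(y) \setminus \mathcal{Q}(k)_0$ — possible since $y$ has degree $1 < r$ in $\mathcal{Q}(k)$ but degree $r$ in $C_r$ — and adjoin $s$ together with all of $n_{C_r}(s)$. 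This raises the degree of $y$ to $2$ and introduces $r-1$ new leaf sinks, so afterwards $y$ is the unique partial sink (for $r \geq 3$; for $r = 2$ the sink $y$ is immediately saturated). If instead $\mathcal{Q}(k)$ already has a unique partial sink $y$ of degree $d$ with $1 < d < r$, I would attach the new source to $y$ in the same way, using a neighbour in $n_{C_r}(y) \setminus \mathcal{Q}(k)_0$, which exists because $d < r$. This raises the degree of $y$ to $d+1$ and again adds $r-1$ leaf sinks: if $d+1 = r$ then $y$ becomes saturated and no partial sink remains, while if $d+1 < r$ then $y$ stays the unique partial sink. In either case the invariant is preserved and exactly one source has been added.

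It then remains to check the bookkeeping. Each step attaches the new source to an existing vertex, so connectedness is maintained, and each new source is adjoined together with its full $C_r$-neighbourhood, so $n_{C_r}(x) \subseteq \mathcal{Q}(n)_0$ for every source $x$ throughout. Hence after $n-1$ steps one obtains a connected source-regular quiver with exactly $n$ sources, giving $(a)$, and the invariant gives $(b)$.

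I expect the only genuine obstacle to be organising the construction so that one \emph{completes} the current partial sink before opening a new one: a naive choice, such as taking a metric ball around a vertex of $C_r$, produces many partial sinks on its boundary and violates $(b)$. The point is to saturate a single sink to degree $r$ — which costs exactly $r-1$ additional sources — before moving on. A short counting argument comparing $nr = \sum_{\text{sinks}} \deg$ with the sink total $n(r-1)+1$ shows that a completely clean quiver (with no partial sink) exists precisely when $(r-1) \mid (n-1)$; this confirms that one partial sink is in general unavoidable, so the bound in $(b)$ is sharp and the invariant above is the best one can maintain.
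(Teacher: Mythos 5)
Your proposal is correct and follows essentially the same route as the paper: an induction on $n$ maintaining the invariant that at most one sink is partial, adjoining at each step a new source (together with its full $C_r$-neighbourhood) attached either to a degree-$1$ sink or to the unique partial sink. Your additional remarks — the explicit justification that a leaf sink exists, the $r=2$ degeneration, and the counting argument showing $(b)$ is sharp unless $(r-1)\mid(n-1)$ — go slightly beyond the paper's proof but change nothing in substance.
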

\begin{proof}
We prove the existence by induction on $n \in \NN$. For $n = 1$ we fix a source, say $x_1$ and let $\mathcal{Q}(n)$ be the full subquiver with vertex set $\{x_1\} \cup n_{C_r}(x_1)$.\\
For $n > 1$ we distinguish two cases. We let $\mathcal{Q}(n-1)$ be the quiver that we have constructed. If every sink $l$ in $\mathcal{Q}(n-1)_0$ satisfies $|n_{C_r}(l) \cap \mathcal{Q}(n-1)_0| \in \{1,r\}$, then we fix a sink $y$ in $\mathcal{Q}(n-1)_0$ with $|n_{C_r}(y) \cap \mathcal{Q}(n-1)_0| = 1$  and $x \in n_{C_r}(y) \setminus \mathcal{Q}(n-1)_0$. Now we define $\mathcal{Q}(n)$ to be the full subquiver with vertex set $\mathcal{Q}(n-1) \cup \{x\} \cup n_{C_r}(x)$. \\
If there exists a $($unique$)$ sink $y$ in $\mathcal{Q}(n-1)_0$ such that $1 < |n_{C_r}(y) \cap \mathcal{Q}(n-1)_0| < r$, then we fix $x \in n_{C_r}(y) \setminus \mathcal{Q}(n-1)_0$. We define $\mathcal{Q}(n)$ to be the full subquiver with vertex set $\mathcal{Q}(n-1) \cup \{x\} \cup n_{C_r}(x)$. By construction $\mathcal{Q}(n)$ has the desired properties.
\end{proof}

\begin{corollary}\label{Corollary:Maximal}
Let $n \in \NN$ and $q_n \in \NN_0$ be the number of sinks $l$ in $\mathcal{Q}(n)$ with the property $|n_{C_r}(l) \cap \mathcal{Q}(n)_0| = r$. Then $q_n = \lfloor \frac{n-1}{r-1} \rfloor$.
\end{corollary}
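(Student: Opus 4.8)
The plan is to avoid re-running the inductive construction of $\mathcal{Q}(n)$ altogether, and instead to recover $q_n$ from a single double-count of the edges of $\mathcal{Q}(n)$, using Lemma~\ref{Lemma:SourceRegularQuiver}(b) as the one structural input. First I would classify the sinks of $\mathcal{Q}(n)$ by their degree $|n_{C_r}(l) \cap \mathcal{Q}(n)_0|$ inside the quiver. Since $\mathcal{Q}(n)$ is source-regular, every sink occurs as a neighbour of some source and hence has degree $\geq 1$; by Lemma~\ref{Lemma:SourceRegularQuiver}(b) there is at most one sink whose degree lies outside $\{1,r\}$. Writing $q_n$ for the number of degree-$r$ (full) sinks, $\ell_n$ for the number of degree-$1$ (leaf) sinks, and encoding the possible single intermediate sink by a flag $\epsilon_n \in \{0,1\}$ together with its degree $m_n$ (where $2 \leq m_n \leq r-1$ whenever $\epsilon_n = 1$), I obtain complete bookkeeping of the sinks.

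Next I would count the edges in two ways. On the source side, source-regularity forces each of the $n$ sources to have all $r$ of its $C_r$-neighbours inside $\mathcal{Q}(n)$, so $\mathcal{Q}(n)$ has exactly $nr$ edges. On the sink side, bipartite orientation means every edge joins a source to a sink, so summing the sink-degrees also gives $nr$:
\[ q_n r + \ell_n + \epsilon_n m_n = nr. \]
I would then use the sink count $q_n + \ell_n + \epsilon_n = n(r-1)+1$, which is the defining property $b = a(r-1)+1$ of a source-regular quiver applied with $a = n$. Subtracting the two identities eliminates $\ell_n$ and produces
\[ q_n(r-1) + \epsilon_n(m_n - 1) = n - 1. \]

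Finally I would read off the answer via the division algorithm. Setting $s_n := \epsilon_n(m_n-1)$, one has $0 \leq s_n \leq r-2$: it is $0$ when $\epsilon_n = 0$, and when $\epsilon_n = 1$ the bound $2 \leq m_n \leq r-1$ gives $1 \leq m_n - 1 \leq r-2$. Hence $n-1 = q_n(r-1) + s_n$ with $0 \leq s_n < r-1$, and the uniqueness of quotient and remainder forces $q_n = \lfloor \frac{n-1}{r-1}\rfloor$. The main obstacle is not any individual computation but ensuring the intermediate-sink contribution is tight enough to serve as a genuine remainder; this is precisely where Lemma~\ref{Lemma:SourceRegularQuiver}(b) is indispensable, since two or more intermediate sinks could push $s_n$ past $r-2$ and spoil the identification with the floor function. (Note the argument also covers $r=2$, where no intermediate degree in $\{2,\dots,r-1\}$ exists, so $\epsilon_n = 0$ and $q_n = n-1$.) As a sanity check, the base case $n=1$ (one source, $r$ leaf sinks, $q_1=0$) and the first completion at $n=r$ (the initial sink reaching degree $r$, so $q_r=1$) both agree with $\lfloor\frac{n-1}{r-1}\rfloor$.
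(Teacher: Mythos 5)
Your proof is correct, and it takes a genuinely different route from the paper's. The paper exploits the inductive construction of $\mathcal{Q}(n)$ from $\mathcal{Q}(n-1)$ in Lemma \ref{Lemma:SourceRegularQuiver}: it reads off the recursion $(\ast)$, namely $q_n = q_{n-1}+1$ if $(r-1)\mid n-1$ and $q_n = q_{n-1}$ otherwise, and then verifies by induction, via a somewhat fiddly chain of equivalences $(\ast\ast)$, that this recursion is solved by $\lfloor \frac{n-1}{r-1}\rfloor$. You instead take a single static snapshot of $\mathcal{Q}(n)$: double-counting edges ($nr$ from the source side by source-regularity, the sum of sink degrees from the other side) against the sink count $n(r-1)+1$ yields the identity $q_n(r-1)+\epsilon_n(m_n-1)=n-1$, and Lemma \ref{Lemma:SourceRegularQuiver}(b) caps the correction term by $r-2$, so Euclidean division identifies $q_n$ with the floor. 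All your ingredients are available: $\mathcal{Q}(n)$ is a full subquiver by construction (and in fact any connected subquiver of the tree $C_r$ is automatically full), so the degree of a sink $l$ in $\mathcal{Q}(n)$ really is $|n_{C_r}(l)\cap\mathcal{Q}(n)_0|$, and the sink count is recorded in the paper --- though note it is a stated consequence of source-regularity rather than its ``defining property'' as you put it (it follows, e.g., from the tree on $n + (\text{number of sinks})$ vertices having $nr$ edges). What your route buys is generality and brevity: it proves the formula for \emph{any} source-regular quiver enjoying property (b) of Lemma \ref{Lemma:SourceRegularQuiver}, not just the particular $\mathcal{Q}(n)$ constructed there, it handles $r=2$ uniformly (no intermediate degree exists, forcing $\epsilon_n=0$), and it replaces the inductive bookkeeping with one line of division with remainder. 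What the paper's induction provides in exchange is the finer dynamic information $(\ast)$ --- exactly at which steps a new full sink appears --- which your count does not recover, though nothing else in the paper uses it.
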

\begin{proof}
By construction we have $q_{n-1} \leq q_n \leq q_{n-1} + 1$ and 
\[ (\ast) \qquad q_n = \begin{cases}
q_{n-1} + 1, \ \text{if} \ (r-1) \mid n -1\\
q_{n-1}, \ \text{if} \ (r-1) \nmid n -1. 
\end{cases}
\]
We prove the statment by induction on $n$. For $n = 1$ we have $q_1 = 0$ and $n-1 = 0$. Now assume $n > 1$ and that
$q_{n-1} = \lfloor \frac{n-2}{r-1} \rfloor$. Note that $(q_{n-1}+ 2)(r-1) > n - 1$ since $r \geq 2$, hence $\lfloor \frac{n-1}{r-1} \rfloor \in \{q_{n-1},q_{n-1}+1\}$. Moreover, we have 
\[ (\ast \ast) \qquad q_{n-1}(r-1) > n - 2 - (r-1) = n -r -1.\]
We conclude
\begin{align*}
 &q_{n-1}+1 = \lfloor \frac{n-1}{r-1} \rfloor \Leftrightarrow  (q_{n-1} +1)(r-1) \leq n - 1 \\
 \Leftrightarrow & q_{n-1}(r-1) \leq n - r \stackrel{(\ast \ast)}{\Leftrightarrow} q_{n-1}(r-1) = n - r 
 \Leftrightarrow (q_{n-1} +1 )(r-1) = n - 1.
\end{align*}
Since $q_{n-1}(r-1) \leq n - 2$ we conclude 
\[  q_{n-1}+1 = \lfloor \frac{n-1}{r-1} \rfloor \Leftrightarrow (r-1)\mid (n-1).\]
In view of $(\ast)$ we get $q_n = \lfloor \frac{n-1}{r-1} \rfloor$.

\end{proof}

\section{Restrictions on Jordan Types}

\subsection{The generic rank of a representation} 

\label{Section:GenericRank}

\noindent Let $M \in \rep(\Gamma_r)$ be a representation. We consider the non-empty  open subset 
\[\MaxRk(M) := \{ [\alpha] \in \PP^{r-1} \mid \rk(M^\alpha) \ \text{is maximal} \} \subseteq \PP^{r-1}  \]
 $($see \cite[4.17]{Wor3}$)$ of the irreducible space $\PP^{r-1}$. We let $d_{M,\alpha} := \rk(M^\alpha)$ and $c_{M,\alpha}:= \dim_k M_1 - 2 d_{M,\alpha}$ for all $\alpha \in k^r \setminus \{0\}$ and set $d_{M} := \rk(M^\alpha)$ for some $[\alpha] \in \MaxRk(M)$ as well as $c_{M} := \dim_k M - 2d_{M}$. The number $d_M$ is called the \textsf{generic rank} or \textsf{maximal rank} of $M$.

 \begin{Lemma}\label{Lemma:GenericRank}
Let $0 \to A \to X \to B \to 0$ be a short exact sequence in $\rep(\Gamma_r)$. The following statements hold:
\begin{enumerate}
\item $d_{X} \geq d_A + d_B$. 
\item $c_X \leq c_A + c_B$.
\end{enumerate}
\end{Lemma}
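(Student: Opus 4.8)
The plan is to first establish the inequality in part (a) pointwise in $\alpha$, and then pass to a generic direction that simultaneously realises the maximal rank of $A$ and of $B$; part (b) will then follow formally from (a) together with additivity of dimension.

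\textbf{Step 1 (pointwise rank inequality).} I would fix $\alpha \in k^r \setminus \{0\}$. Evaluating the short exact sequence at each vertex gives, for $j \in \{1,2\}$, short exact sequences $0 \to A_j \to X_j \to B_j \to 0$ of $k$-vector spaces, and these assemble into a commutative diagram with exact rows whose vertical maps are precisely $A^\alpha$, $X^\alpha$ and $B^\alpha$. Applying the snake lemma yields an exact sequence beginning
\[ 0 \to \ker A^\alpha \to \ker X^\alpha \to \ker B^\alpha, \]
whose exactness already forces $\dim_k \ker X^\alpha \leq \dim_k \ker A^\alpha + \dim_k \ker B^\alpha$. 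Since $\dim_k X_1 = \dim_k A_1 + \dim_k B_1$ and $\rk(M^\alpha) = \dim_k M_1 - \dim_k \ker M^\alpha$ for any representation $M$, this inequality rearranges into the pointwise bound $\rk(X^\alpha) \geq \rk(A^\alpha) + \rk(B^\alpha)$, that is, $d_{X,\alpha} \geq d_{A,\alpha} + d_{B,\alpha}$ for every $\alpha$.

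\textbf{Step 2 (choosing a generic direction).} Here the subtlety is that $d_A$, $d_B$ and $d_X$ are attained on possibly different open subsets of $\PP^{r-1}$. Each of $\MaxRk(A)$ and $\MaxRk(B)$ is a non-empty open subset of the irreducible space $\PP^{r-1}$, hence dense, so their intersection is non-empty. Picking $[\alpha] \in \MaxRk(A) \cap \MaxRk(B)$ gives $d_{A,\alpha} = d_A$ and $d_{B,\alpha} = d_B$, and combining this with the definition of $d_X$ as a maximal rank and with Step 1 yields
\[ d_X \geq d_{X,\alpha} \geq d_{A,\alpha} + d_{B,\alpha} = d_A + d_B, \]
which is statement (a).

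\textbf{Step 3 (deducing (b)).} Exactness of the sequence gives $\dim_k X = \dim_k A + \dim_k B$, so using (a) I would compute
\[ c_X = \dim_k X - 2 d_X = \dim_k A + \dim_k B - 2 d_X \leq \dim_k A + \dim_k B - 2(d_A + d_B) = c_A + c_B, \]
which is (b). The only genuinely delicate point is Step 2: the inequality of Step 1 holds for \emph{every} $\alpha$, but passing from the pointwise statement to the statement about generic ranks requires invoking irreducibility of $\PP^{r-1}$ (equivalently, that two non-empty Zariski-open subsets meet) to produce a single $\alpha$ witnessing both $d_A$ and $d_B$ at once. Everything else is elementary linear algebra and additivity of dimension vectors along short exact sequences.
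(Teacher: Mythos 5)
Your proposal is correct and follows essentially the same route as the paper: both arguments apply the snake lemma to the vertical maps $A^\alpha$, $X^\alpha$, $B^\alpha$ to get $\dim_k \ker X^\alpha \leq \dim_k \ker A^\alpha + \dim_k \ker B^\alpha$, choose $[\alpha] \in \MaxRk(A) \cap \MaxRk(B)$ via irreducibility of $\PP^{r-1}$, and deduce $(b)$ from additivity of dimension along the short exact sequence. Your separation into a pointwise inequality followed by a generic choice of $\alpha$ is merely a reorganization of the paper's computation, not a different method.
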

\begin{proof}\begin{enumerate}
\item Since $\MaxRk(A)$ and $\MaxRk(B)$ are non-empty open subsets of the irreducible space $\PP^{r-1}$, we find $[\alpha] \in \MaxRk(A) \cap \MaxRk(B) \neq \emptyset$. We apply the Snake lemma and get an exact sequence
\[ 0 \to \ker A^\alpha \to \ker X^\alpha \to \ker B^\alpha \to \coker A^\alpha \to \coker X^\alpha \to \coker B^\alpha \to 0.\]
We conclude 
\begin{align*}
 d_X &\geq \rk(X^\alpha) = \rk(A^\alpha) + \rk(B^\alpha) - \dim_k \ker X^\alpha + \dim_k \ker A^\alpha + \dim_k \ker B^\alpha \\
 &\geq \rk(A^\alpha) + \rk(B^\alpha) = d_A + d_B.
\end{align*}

\item We have $c_X = \dim_k X - 2d_X \leq \dim_k A + \dim_k B - 2(d_A + d_B) = c_A + c_B$.
\end{enumerate}
\end{proof}

\begin{Example}\label{Example:EKPEIP} \normalfont
Let $M \in \rep(\Gamma_r)$ be an indecomposable representation in $\EKP \cup \EIP$, then $d_M = \min\{ \dim_k M_1,\dim_k M_2\}$ and $(d_M,d_M+c_M) \in \{(\dim_k M_1,\dim_k M_2), (\dim_k M_2,\dim_k M_1) \}$. We conclude with Kac's Theorem $q_{\Gamma_r}(d_M,d_M+c_M) \leq 1$. 
\end{Example}

\subsection{Elementary representations}

In this section we study the generic rank of the so-called elementary representations of $\Gamma_r$. By definition the set of all elementary representations is the smallest subset $\cE \subseteq \rep(\Gamma_r)$ such that every regular representation $X$ has a filtration 
with all filtration factors in $\cE$. We show that $q_{\Gamma_r}(d_E,d_E+c_E) < 1$ for each elementary representation $E$ and conclude that each regular representation $M$ satisfies $q_{\Gamma_r}(d_M,d_M+c_M) < 1$. \noindent For basic properties and results on hereditary algebras and elementary representations, used in the following proofs, we refer to \cite{Ker3}, \cite[1.3]{KerLuk1} and \cite[A1]{Ri10}.

\begin{Definition} \cite[1]{KerLuk1} 
Let $r \geq 3$. A non-zero regular representation $E \in \rep(\Gamma_r)$ is called \textsf{elementary}, if there is no short exact sequence $0 \to A \to E \to B \to 0$ with $A$ and $B$ regular non-zero. We denote by $\cE \subseteq \rep(\Gamma_r)$ the set of all elementary representations.
\end{Definition}

\begin{Remark}
Observe that $d_M = 0$ if and only if $M$ is semisimple. In particular, $d_M \neq 0$ for each regular representation $M$.
\end{Remark}

\begin{Lemma}\label{Corollary:QuadraticFormRegular}
Let $r \geq 3$ and $K \in \RR$.
\begin{enumerate}
\item If $\frac{d_E+c_E}{d_E} < K$ for each $E \in \cE$, then $\frac{d_X+c_X}{d_X} < K$  for each $($not necessarily indecomposable$)$ regular representation $X \in \rep(\Gamma_r)$.
\item If $q_{\Gamma_r}(d_E,d_E+c_E) < 1$ for each $E \in \cE$, then $q_{\Gamma_r}(d_X,d_X+c_X) < 1$ for each $($not necessarily indecomposable$)$ regular representation $X \in \rep(\Gamma_r)$.
\end{enumerate} 
\end{Lemma}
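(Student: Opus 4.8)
The plan is to reduce both parts to a single ``mediant'' estimate for short exact sequences and then run an induction along an elementary filtration. Throughout, for a representation $M$ with $d_M > 0$ I write $y_M := d_M + c_M$; since $c_M = \dim_k M - 2 d_M$ one has $y_M = \dim_k M - d_M$, and because $c_M \ge 0$ the quotient $t_M := y_M/d_M = \frac{d_M+c_M}{d_M}$ satisfies $t_M \ge 1$. Both claims are then assertions about $t_M$: part $(a)$ reads $t_X < K$, and, since $q_{\Gamma_r}(d_X, y_X) = d_X^2\,(t_X^2 - r\,t_X + 1)$, part $(b)$ reads $t_X^2 - r\,t_X + 1 \le 0$, i.e.\ $t_X \in [\lambda_-,\lambda_+]$ where $\lambda_{\pm} := \tfrac{r \pm \sqrt{r^2-4}}{2}$ are the roots of $t^2 - r t + 1$ (real since $r \ge 3$).

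The key lemma I would isolate is: if $0 \to A \to X \to B \to 0$ is exact with $d_A, d_B > 0$, then $t_X \le \max(t_A, t_B)$. Indeed, Lemma~\ref{Lemma:GenericRank} gives $d_X \ge d_A + d_B > 0$, and additivity of $\dim_k$ together with the same inequality yields $y_X = \dim_k X - d_X \le (\dim_k A + \dim_k B) - (d_A + d_B) = y_A + y_B$; the elementary mediant inequality $\frac{y_A + y_B}{d_A + d_B} \le \max(\frac{y_A}{d_A}, \frac{y_B}{d_B})$ for positive denominators then gives $t_X \le \frac{y_A + y_B}{d_A + d_B} \le \max(t_A, t_B)$. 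Note how passing from $c_X$ to $y_X = \dim_k X - d_X$ converts the ``wrong-way'' inequality $c_X \le c_A + c_B$ of Lemma~\ref{Lemma:GenericRank}$(b)$ into the usable bound $y_X \le y_A + y_B$; this is the one genuinely delicate point of the argument.

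For part $(a)$ I would prove, by induction on $n$, that any representation $M$ admitting a filtration of length $n \ge 1$ with all factors in $\cE$ has $d_M > 0$ and $t_M < K$ (phrasing it in terms of filtration length avoids having to check that intermediate terms are regular). The base case $n = 1$ is the hypothesis, since an elementary representation is regular and non-zero, hence has $d_M > 0$. For the step, apply the key lemma to $0 \to M' \to M \to F \to 0$, where $M'$ carries a length-$(n-1)$ elementary filtration (inductive hypothesis: $d_{M'} > 0$, $t_{M'} < K$) and $F \in \cE$ (so $d_F > 0$, $t_F < K$); then $d_M \ge d_{M'} + d_F > 0$ and $t_M \le \max(t_{M'}, t_F) < K$. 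Since by definition every regular $X$ admits some elementary filtration, this yields $(a)$.

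Finally I would deduce $(b)$ from $(a)$ instead of rerunning the induction. The hypothesis of $(b)$ says $q_{\Gamma_r}(d_E, y_E) < 1$ for every $E \in \cE$; as $q_{\Gamma_r}$ is integer-valued this means $q_{\Gamma_r}(d_E, y_E) \le 0$, i.e.\ $t_E^2 - r\,t_E + 1 \le 0$, so in particular $t_E \le \lambda_+$. Hence for every real $K > \lambda_+$ the hypothesis of $(a)$ holds, giving $t_X < K$ for all regular $X$; letting $K \downarrow \lambda_+$ yields $t_X \le \lambda_+$. On the other hand $t_X \ge 1 > \lambda_-$ always (here $\lambda_- < 1$ since $\lambda_- \lambda_+ = 1$ and $\lambda_+ > 1$ for $r \ge 3$), so $t_X \in [\lambda_-, \lambda_+]$ and therefore $q_{\Gamma_r}(d_X, y_X) = d_X^2(t_X^2 - r\,t_X + 1) \le 0 < 1$. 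The main obstacle is thus conceptual rather than computational: spotting the substitution $y_M = \dim_k M - d_M$ that makes Lemma~\ref{Lemma:GenericRank} combine into a mediant bound, and observing that the lower bound $t_X \ge \lambda_-$ comes for free from $c_X \ge 0$.
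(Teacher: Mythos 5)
Your proof is correct and takes essentially the same route as the paper's: your mediant bound $t_X \le \max(t_A,t_B)$ is a repackaging of the paper's inline estimate $d_X + c_X \le d_X + c_A + c_B < d_X + (K-1)(d_A+d_B) \le K d_X$ obtained from Lemma \ref{Lemma:GenericRank}, and your induction on the length of an elementary filtration (with base case $d_E > 0$ for $E$ regular non-zero) is identical to the paper's. The only cosmetic difference is in part $(b)$: the paper cites the description of the imaginary roots in Example \ref{Examples} to get the strict bound $t_E < \lambda_+ := \frac{r+\sqrt{r^2-4}}{2}$ and applies $(a)$ once with $K = \lambda_+$, whereas you derive only the non-strict bound $t_E \le \lambda_+$ from integrality of $q_{\Gamma_r}$ and compensate by letting $K \downarrow \lambda_+$ --- both are valid, since at the endpoint $t_X = \lambda_+$ one still has $q_{\Gamma_r}(d_X,d_X+c_X) = 0 < 1$.
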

\begin{proof}
$(a)$ Let $X$ be a regular representation, then there is a filtration of minimal length $n \in \NN$
\[0 = X_0 \subset X_1 \subset \ldots \subset X_n = X\]
such that $X_i/X_{i-1} \in \cE$ for all $1 \leq i \leq n$. We prove the statement by induction on $n \in \NN$. Cleary, $n = 1$ if and only if $X$ is elementary. Now let $n > 1$, then 
we have a short exact sequence $0 \to X_{n-1} \to X \to X/X_{n-1} \to 0$ such that $A := X_{n-1}$ has a filtration of length $\leq n-1$ with filtration factors in $\cE$ and $B:= X/X_{n-1}$ is elementary.
Hence $\frac{d_A+c_A}{d_A} < K, \frac{d_B+c_B}{d_B} < K$. We conclude with Lemma $\ref{Lemma:GenericRank}$
\[
d_X+c_X \leq d_X + c_A + c_B < d_X + (K-1)(d_A+d_B) \leq d_X + (K-1)d_X \leq K d_X
.\]
$(b)$ This follows from $(a)$ and Example $\ref{Examples}$ wit $K = \frac{r+\sqrt{r^2-4}}{2}$.
\end{proof}

\begin{proposition}\cite[Appendix 1]{Ri10}\label{Proposition:RingelElementary}
Let $E$ be non-zero regular module $E$. Then $E$ is elementary if and
only if given any submodule $U$ of $E$, the submodule $U$ is preprojective or the factor module
$E/U$ is preinjective.
\end{proposition}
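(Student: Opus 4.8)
The plan is to reduce everything to three standard facts about the wild hereditary algebra $k\Gamma_r$ with $r \geq 3$ (for which I would cite \cite{Ker3}, \cite[1.3]{KerLuk1}, \cite[A1]{Ri10}): first, the morphism vanishings $\Hom(\cR,\cP) = \Hom(\cI,\cP) = \Hom(\cI,\cR) = 0$ between the preprojective, regular and preinjective classes $\cP,\cR,\cI$; second, that a submodule of a regular module has no preinjective summand (so it lies in $\add(\cP \cup \cR)$) while a factor module of a regular module has no preprojective summand (so it lies in $\add(\cR \cup \cI)$); and third, the two torsion pairs $(\add(\cR \cup \cI),\cP)$ and $(\cI,\add(\cP \cup \cR))$. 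I would prove both implications by contraposition so that the whole argument is driven by these facts.

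The forward-free direction is immediate: assuming every submodule $U \subseteq E$ is preprojective or has preinjective factor $E/U$, a non-elementary $E$ would admit a short exact sequence $0 \to A \to E \to B \to 0$ with $A,B$ regular and non-zero, and taking $U = A$ would give a submodule that is neither preprojective (it is non-zero regular) nor has preinjective factor (the factor is the non-zero regular $B$), a contradiction. So this direction costs nothing once the definition is unwound.

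The substance is the converse, and here I would argue as follows. Assuming $E$ elementary, suppose some $U \subseteq E$ is not preprojective while $E/U$ is not preinjective, and aim to build a short exact sequence with regular non-zero ends. Since $E/U \notin \cI$, I would apply the torsion pair $(\cI,\add(\cP \cup \cR))$ to $E/U$: its largest preinjective submodule $V/U$ yields a factor $E/V \cong (E/U)/(V/U)$ lying in $\add(\cP \cup \cR)$, which — being also a factor of the regular module $E$ — has no preprojective summand and is therefore regular, and non-zero exactly because $E/U \notin \cI$. Since $U \notin \cP$ and $U \in \add(\cP \cup \cR)$, the module $U$ has a non-zero regular summand, hence a non-zero regular submodule; as $U \subseteq V$, the same submodule sits inside $V$. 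Applying the other torsion pair $(\add(\cR \cup \cI),\cP)$ to $V$ (which, being a submodule of $E$, lies in $\add(\cP \cup \cR)$) would then produce a regular submodule $A' := t(V)$ with $V/A' \in \cP$ and $A' \neq 0$.

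The final step is to verify that $E/A'$ is regular, and this is the one computation where the vanishings are essential. From $0 \to V/A' \to E/A' \to E/V \to 0$ with $V/A' \in \cP$ and $E/V \in \cR$, any preinjective submodule of $E/A'$ would map to zero in $E/V$ because $\Hom(\cI,\cR) = 0$, hence factor through $V/A' \in \cP$, and then vanish because $\Hom(\cI,\cP) = 0$; so $E/A'$ has no preinjective summand, and as a factor of $E$ it has no preprojective summand either, whence $E/A' \in \cR$, non-zero since $E/V$ is one of its factors. Then $0 \to A' \to E \to E/A' \to 0$ contradicts elementariness. I expect the main obstacle to be precisely this purification: a naive submodule only gives $A$ regular with $E/A$ possibly carrying a preinjective summand, or a regular factor whose kernel carries a preprojective part. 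The resolution is to apply the two torsion pairs in the right order — first pass to the preinjective-closed $V$ to kill the preinjective part of the factor, then extract $A' = t(V)$ to kill the preprojective part of the submodule — and to check that no preinjective summand can re-enter in $E/A'$, which is exactly what $\Hom(\cI,\cR) = \Hom(\cI,\cP) = 0$ guarantees.
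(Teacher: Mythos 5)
Your proof is correct, and in fact there is nothing in the paper to compare it against: the paper does not prove this proposition at all but imports it verbatim from \cite[Appendix 1]{Ri10}. Your reconstruction is essentially Ringel's own argument, and it is complete as written. The easy direction is exactly the unwinding of the definition (a short exact sequence with regular non-zero ends yields a submodule $U=A$ that is not preprojective with $E/U\cong B$ not preinjective, using the paper's convention that preprojective/preinjective means all indecomposable summands are such). In the converse, your two-step purification checks out at every joint: $V/U:=t_{\cI}(E/U)$ gives $E/V\in\add(\cP\cup\cR)$, non-zero precisely because $E/U\notin\cI$, and regular because factor modules of regular modules carry no preprojective summand ($\Hom(\cR,\cP)=0$); the submodule $U$ lies in $\add(\cP\cup\cR)$ since $\Hom(\cI,\cR)=0$, so $U$ not preprojective forces a non-zero regular summand $R$, and $R\subseteq A':=t(V)$ is immediate from $\Hom(\cR,\cP)=0$ applied to the composite $R\hookrightarrow V\to V/A'\in\add\cP$, which gives $A'\neq 0$; that $A'$ is actually regular follows from the same two vanishings ($A'\subseteq E$ excludes preinjective summands, $A'\in\add(\cR\cup\cI)$ excludes preprojective ones). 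Your final computation, that a preinjective direct summand of $E/A'$ dies against $0\to V/A'\to E/A'\to E/V\to 0$ via $\Hom(\cI,\cR)=0$ and then $\Hom(\cI,\cP)=0$, is exactly the needed purification, and $E/A'\neq 0$ since it surjects onto $E/V$. The only thing I would insist you make explicit in a written-up version is the load-bearing citation that $\add\cI$ and $\add(\cR\cup\cI)$ are genuinely torsion classes (closed under quotients and extensions), so that the torsion submodules $t_\cI(E/U)$ and $t(V)$ exist; this is standard for hereditary algebras, e.g. \cite{Ker3} or \cite[1.3]{KerLuk1}, and the extension-closure is itself a one-line consequence of the same Hom-vanishings you list, but as stated you are assuming it silently.
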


\begin{Lemma}\label{Lemma:DimensionVectorNotInEKP}
Let $r \geq 3$, $M \in \rep(\Gamma_r)$ be an elementary representation such that $\dimu M = (a,b)$ with $a \leq b$ and $M \not \in \EKP$, then $a \leq b \leq r -1$.
\end{Lemma}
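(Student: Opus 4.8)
The plan is to exploit the failure of $M \in \EKP$ to manufacture a very small submodule and then feed it into Ringel's characterization of elementary modules (Proposition \ref{Proposition:RingelElementary}). Since $M \notin \EKP$, there are $\alpha \in k^r \setminus \{0\}$ and $0 \neq v \in M_1$ with $M^\alpha v = 0$. First I would form the subrepresentation $U \subseteq M$ generated by $v$, namely $U_1 = kv$ and $U_2 = \sum_{i=1}^r M(\gamma_i)(kv)$, so that $\dimu U = (1,s)$. Because the relation $\sum_i \alpha_i M(\gamma_i)v = 0$ is nontrivial, the $r$ vectors $M(\gamma_i)v$ are linearly dependent, and hence $s = \dim_k U_2 \le r-1$.

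The next step is to exclude that $U$ is preprojective. If it were, then decomposing $U$ into indecomposable preprojectives and using that the unique indecomposable preprojective with first coordinate $1$ is $P_2$ (with $\dimu P_2 = (1,r)$), the equality $\dim_k U_1 = 1$ would force exactly one $P_2$-summand together with copies of $P_1 = (0,1)$, so that $\dim_k U_2 \ge r$, contradicting $s \le r-1$. Thus $U$ is not preprojective, and Proposition \ref{Proposition:RingelElementary} forces the quotient $N := M/U$, with $\dimu N = (a-1,\,b-s)$, to be preinjective. If $a = 1$ then $N = 0$, whence $b = s \le r-1$ and we are done; so I would assume $a \ge 2$, which gives $N \neq 0$.

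The heart of the argument is a ratio estimate for preinjective modules. Using Lemma \ref{Lemma:PropertiesARquiver} and the description of the roots in Example \ref{Examples}, every indecomposable preinjective $I_j$ satisfies $q_{\Gamma_r}(\dimu I_j) = 1 > 0$ together with $(\dimu I_j)_1 > (\dimu I_j)_2$; analysing the region where $q_{\Gamma_r} > 0$ shows that these two facts force $(\dimu I_j)_1 \ge L\,(\dimu I_j)_2$ with $L := \tfrac{r + \sqrt{r^2-4}}{2}$, and summing over the indecomposable summands yields $\dim_k N_1 \ge L\,\dim_k N_2$ for the whole preinjective module $N$. Substituting $\dimu N = (a-1,\,b-s)$, the hypothesis $a \le b$ and the bound $s \le r-1$ then gives $b \ge a \ge 1 + L\bigl(b-(r-1)\bigr)$, which rearranges to $b \le \tfrac{L(r-1)-1}{L-1}$. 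A short computation shows this upper bound is strictly less than $r$, the key inequality being $r-1 < L$, valid for $r \ge 3$; since $b \in \NN$ this upgrades to $b \le r-1$.

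The step I expect to be the main obstacle is precisely this ratio estimate together with its sharpness. One must verify both that the preinjective cone is exactly the half-plane $\{x \ge Ly\}$ (so that the inequality $\dim_k N_1 \ge L\,\dim_k N_2$ survives for a possibly decomposable $N$) and that the resulting real bound $\tfrac{L(r-1)-1}{L-1}$ lands strictly between $r-1$ and $r$, since it is exactly this integrality gap that converts the a priori estimate into the exact conclusion $b \le r-1$. Everything else in the argument is either a direct dimension count or a citation of Proposition \ref{Proposition:RingelElementary}.
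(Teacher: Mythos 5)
Your proof is correct, and it shares the paper's structural skeleton --- a submodule with dimension vector $(1,s)$, $s \leq r-1$, fed into Ringel's criterion (Proposition \ref{Proposition:RingelElementary}) to make the quotient preinjective, followed by the preinjective slope bound with $L := \tfrac{r+\sqrt{r^2-4}}{2}$ --- but the execution is genuinely different and more self-contained. The paper argues in two stages: it first uses the functorial characterization of $\EKP$ (Theorem \ref{Theorem:Worch}) to get a nonzero map $f \colon X_\alpha \to M$, shows $\im f$ is regular with $\dimu \im f = (1,d)$ by citing \cite[3.5, 3.8]{Bi3}, and deduces $b-a < r-1$; it then assumes $b \geq r$, invokes \cite[14.7]{Bi3} for a \emph{second}, non-preprojective submodule of dimension vector $(1,r-1)$, and cites the slope bound \cite[14.9]{Bi3} to reach a contradiction. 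You instead extract a kernel vector $v$ directly from the definition of $\EKP$, generate $U = \bigl(kv, \sum_i M(\gamma_i)(kv)\bigr)$ with $s \leq r-1$ forced by the relation $\sum_i \alpha_i M(\gamma_i)v = 0$, rule out preprojectivity of $U$ by inspecting the possible summands $P_1 = (0,1)$ and $P_2 = (1,r)$ (which substitutes for the paper's regularity-of-the-image step, since Ringel's criterion only needs ``$U$ not preprojective''), and then a single pass through Ringel's criterion and the slope bound yields $b \leq \tfrac{L(r-1)-1}{L-1} < r$ directly, with no contradiction argument, no second submodule, and no appeal to Theorem \ref{Theorem:Worch} or to \cite{Bi3}; your derivation of the slope bound itself from $q_{\Gamma_r}(\dimu I_j) = 1$, Lemma \ref{Lemma:PropertiesARquiver}(c) and Example \ref{Examples} is sound, and only the containment direction of your ``half-plane'' worry is needed, which is exactly what you prove (additivity over summands is immediate). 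Two small points to make explicit in a write-up: in the case $a=1$, justify $N = 0$ via Lemma \ref{Lemma:PropertiesARquiver}(c) (a nonzero preinjective representation has strictly larger first coordinate, so $(0,b-s)$ preinjective forces $b = s$); and the closing arithmetic $\tfrac{L(r-1)-1}{L-1} < r \Leftrightarrow r-1 < L \Leftrightarrow r-2 < \sqrt{r^2-4} \Leftrightarrow r > 2$ does check out for $r \geq 3$. What the paper's route buys is reuse of machinery already developed elsewhere (and the intermediate inequality $b-a \leq r-2$ needed to invoke \cite[14.7]{Bi3}); what yours buys is a shorter, citation-free argument from first principles.
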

\begin{proof}
We assume that $M$ does not have the equal kernels property.  By $\ref{Theorem:Worch}$ we find $\alpha \in k^r \setminus \{0\}$ and $f \in \Hom(X_\alpha,M) \setminus \{0\}$. Since $\dimu X_\alpha = (1,r-1)$ and $M$ is regular indecomposable, there is $d \in \{1,\ldots,r-1\}$ such that $\dimu \im f = (1,d)$ $($see \cite[3.5, 3.8]{Bi3}$)$. It follows that $\im f$ is regular $($see \cite[VIII.2.13]{Assem1} and $\ref{Lemma:PropertiesARquiver})$. Since $M$ is elementary, we conclude with $\ref{Proposition:RingelElementary}$ that $M / \im f$ is preinjective. Hence Lemma $\ref{Lemma:PropertiesARquiver}$ implies that $x \geq y$, where $(x,y) := \dimu M/\im f = (a,b) - (1,d)$. Hence $b - a < r - 1$.
We assume that $b \geq r$, then $b-a < r - 1 < b$ and \cite[14.7]{Bi3} $($see also \cite[3.2]{Ri10}$)$ implies the existence of a non-preprojective representation $U \subseteq M$ with dimension vector $(1,r-1)$. We conclude with $\ref{Proposition:RingelElementary}$ that $M/U$ is a preinjective representation with dimension vector $(a-1,b-(r-1))$ and $b-(r-1) \neq 0$. Now \cite[14.9]{Bi3} implies
\[\frac{b-1}{b-(r-1)} \geq \frac{a-1}{b-(r-1)} \geq \frac{r+\sqrt{r^2-4}}{2} > (r-1).\] 
It follows $r(r-2) > b(r-2)$ and therefore $r > b$, a contradiction. Hence $a \leq b \leq r-1.$
\end{proof}

\begin{Lemma}\label{Lemma:QuadraticFormRestriction}
Let $r \geq 3$, $M \in \rep(\Gamma_r)$ and $N := M_{\mid \{\gamma_1,\ldots,\gamma_{r-1}\} } \in \rep(\Gamma_{r-1})$ be the restriction of $M$ to $\Gamma_{r-1}$. The following statements hold.
\begin{enumerate}
\item $d_{M} \geq d_{N}$ and $c_M \leq c_N$.
\item If $q_{\Gamma_{r-1}}(d_N,d_N+c_N) < 1$, then $q_{\Gamma_{r}}(d_M,d_M+c_M) < 1$.
\end{enumerate}
\end{Lemma}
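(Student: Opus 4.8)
The plan is to dispatch part (a) directly from the definitions and then reduce part (b) to an elementary one-variable estimate for the Tits form along a fixed anti-diagonal.

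For (a), the first observation is that restricting $M$ to $\Gamma_{r-1}$ leaves the underlying spaces untouched, so $N_1 = M_1$, $N_2 = M_2$ and in particular $\dim_k N = \dim_k M$. For any $\beta \in k^{r-1} \setminus \{0\}$ the operator $N^\beta = \sum_{i=1}^{r-1}\beta_i M(\gamma_i)$ equals $M^{(\beta,0)}$ with $(\beta,0) \in k^r \setminus \{0\}$; hence every rank attained by $N$ is also attained by $M$, and taking maxima over $\MaxRk$ gives $d_N \le d_M$. Since $c_X = \dim_k X - 2 d_X$ and $\dim_k N = \dim_k M$, it follows that $c_N - c_M = 2(d_M - d_N) \ge 0$, which is the second inequality.

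For (b), I would set $a := d_M$, $b := d_M + c_M$, $a' := d_N$, $b' := d_N + c_N$. The essential point is that both pairs lie on the same anti-diagonal: $a + b = 2 d_M + c_M = \dim_k M = \dim_k N = 2 d_N + c_N = a' + b'$, and I call this common value $s$. From (a) together with $c_M, c_N \ge 0$ one obtains the chain $a' \le a \le b \le b'$; in particular $a, a' \in [0, s/2]$. Restricting the Tits form to the line $x + y = s$, a direct computation gives $q_{\Gamma_{r-1}}(x, s-x) = (r+1)x^2 - (r+1)sx + s^2$, an upward parabola with vertex at $x = s/2$, hence strictly decreasing on $[0, s/2]$. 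Since $a' \le a \le s/2$, monotonicity yields $q_{\Gamma_{r-1}}(a,b) \le q_{\Gamma_{r-1}}(a',b') < 1$. Finally, the identity $q_{\Gamma_r}(a,b) = q_{\Gamma_{r-1}}(a,b) - ab$ is immediate from the definition of the Tits form, and as $a,b \ge 0$ we have $ab \ge 0$, so $q_{\Gamma_r}(d_M, d_M+c_M) = q_{\Gamma_{r-1}}(a,b) - ab < 1 - ab \le 1$, which is the assertion.

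The computations here are all routine; the single step requiring a genuine idea is the observation in (b) that the pairs $(d_M, d_M+c_M)$ and $(d_N, d_N+c_N)$ share the same coordinate sum $\dim_k M$, so that hypothesis and conclusion can be compared along one line on which the Tits form is a convex parabola. Once this ``same anti-diagonal'' fact is in place, the monotonicity estimate and the one-term correction $-ab$ relating $q_{\Gamma_r}$ to $q_{\Gamma_{r-1}}$ close the argument with no further input.
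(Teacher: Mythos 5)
Your proof is correct. Part (a) coincides with the paper's argument: extend $\beta \in k^{r-1}\setminus\{0\}$ by a zero coordinate to get $\rk N^\beta = \rk M^{(\beta,0)} \le d_M$, and use $\dim_k N = \dim_k M$ to pass to $c_M \le c_N$. For part (b), however, you take a genuinely different and more elementary route. The paper first reduces to $N \neq 0$, observes that $q_{\Gamma_{r-1}}(d_N,d_N+c_N)<1$ forces $d_N \neq 0$, and then argues with slopes: by the description of $\Delta^{\im}_+$ in Example $\ref{Examples}$ one has $\frac{d_N+c_N}{d_N} < \frac{(r-1)+\sqrt{(r-1)^2-4}}{2} < \frac{r+\sqrt{r^2-4}}{2}$, and part (a) gives $1+\frac{c_M}{d_M} \le 1+\frac{c_N}{d_N}$, placing $(d_M,d_M+c_M)$ inside the imaginary cone of $\Gamma_r$. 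You instead exploit that both pairs lie on the same line $x+y = \dim_k M$, on which $q_{\Gamma_{r-1}}$ restricts to the parabola $(r+1)x^2-(r+1)sx+s^2$ with vertex at $s/2$; since $d_N \le d_M \le s/2$, monotonicity gives $q_{\Gamma_{r-1}}(d_M,d_M+c_M) \le q_{\Gamma_{r-1}}(d_N,d_N+c_N) < 1$, and the identity $q_{\Gamma_r}(a,b) = q_{\Gamma_{r-1}}(a,b) - ab$ with $ab \ge 0$ finishes. Your version is division-free, so the degenerate cases $M=0$ and $d_N=0$ need no separate treatment, and it avoids invoking the root-theoretic characterization from Example $\ref{Examples}$, making the lemma self-contained; the paper's version, in exchange, phrases everything in terms of the ratio bound $\frac{r+\sqrt{r^2-4}}{2}$ that is used systematically elsewhere (compare Lemma $\ref{Corollary:QuadraticFormRegular}$), so its estimate composes directly with those arguments. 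Both proofs use part (a) in the same way, and both yield the required strict inequality.
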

\begin{proof}
\begin{enumerate}

\item Let $\alpha \in k^{r-1} \setminus \{0\}$ such that $d_N = \rk(N^\alpha)$, then $\beta := (\alpha_1,\ldots,\alpha_{r-1},0) \in k^r \setminus \{0\}$ and $d_M \geq \rk(M^\beta) = \rk(N^\alpha) = d_N$.
\item Note that the statement is obviously true if $N$ and therefore $M$ are zero. Hence we can assume that $N$ is not zero, i.e. $(c_N,d_N) \neq (0,0)$. Hence $q_{\Gamma_{r-1}}(d_N,d_N+c_N) < 1$ implies $d_N \neq 0$. Since $\frac{{r-1}+\sqrt{(r-1)^2-4}}{2} < \frac{r+\sqrt{r^2-4}}{2}$, we conclude with Example $\ref{Examples}$ and $(a)$ that
\[\frac{r+\sqrt{r^2-4}}{2} > \frac{d_N+c_N}{d_N}  = 1 + \frac{c_N}{d_N} \geq 1 + \frac{c_M}{d_M} =  \frac{d_M +c_M}{d_M},\]
i.e. $q_{\Gamma_r}(d_M,d_M+c_M) < 1$.
\end{enumerate}
\end{proof}

We denote by $D_{\Gamma_r} \colon \rep(\Gamma_r) \to \rep(\Gamma_r)$ the duality given by $D_{\Gamma_r}M_i := \Hom_k(M_{3-i},k)$ for $i \in \{1,2\}$ and $D_{\Gamma_r}M(\gamma_i) := M(\gamma_i)^\ast$ for all $1 \leq i \leq r$. The duality sends elementary representations to elementary representations. Moreover, $M \in \EKP$ if and only if $D_{\Gamma_r} M \in \EIP$. The proof of the following result is inspired by the arguments used in \cite[4.1]{Ri10}.

\begin{Lemma}\label{Lemma:RestrictionRegular}
Let $r \geq 3$, $E \in \rep(\Gamma_r)$ elementary and $\dimu E = (a,b)$ with $2 \leq a \leq b \leq r - 1$. We consider the restriction $M := E_{\mid \{\gamma_1,\ldots,\gamma_{r-1}\}}$ of $E$ to $\Gamma_{r-1}$.  Then $M \in \rep(\Gamma_{r-1})$ is regular, i.e. every indecomposable direct summand of $M$ is regular in $\rep(\Gamma_{r-1})$.
\end{Lemma}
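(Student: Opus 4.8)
The plan is to show directly that every indecomposable summand of $M$ is regular, by excluding preprojective and preinjective summands. Since restriction to $\{\gamma_1,\ldots,\gamma_{r-1}\}$ leaves the underlying spaces untouched, $\dimu M = (a,b)$ as well, and an elementary $E$ is in particular indecomposable and regular. First I would use the dimension-vector recursion $\dimu X_{i+2} = r\dimu X_{i+1} - \dimu X_i$ of Section \ref{Section:ARquiver} together with Lemma \ref{Lemma:PropertiesARquiver} to list which indecomposable preprojective/preinjective representations of $\Gamma_{r-1}$ can occur as a summand of a module of dimension vector $(a,b)$ with $2 \le a \le b \le r-1$: the only possibilities are $P_1 = (0,1)$, $P_2 = (1,r-1)$, $I_1 = (1,0)$ and $I_2 = (r-1,1)$, the latter two occurring only when $a = r-1$, resp. $b=r-1$. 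A one-line computation then shows that a $P_2$-summand forces $\dimu M' = (a-1,0)$, hence an $I_1 = S_1$-summand, and an $I_2$-summand forces $\dimu M' = (0,r-2)$, hence a $P_1 = S_2$-summand. So everything reduces to proving that $M$ has no $S_1$ and no $S_2$ summand, i.e. that $\bigcap_{i=1}^{r-1}\ker E(\gamma_i) = 0$ and $\sum_{i=1}^{r-1}\im E(\gamma_i) = E_2$.

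The engine is a rank estimate coming from Ringel's criterion \ref{Proposition:RingelElementary}. For $0 \neq v \in E_1$ I would consider the submodule $\langle v\rangle \subseteq E$ generated by $v$; it has dimension vector $(1,\rho(v))$ with $\rho(v) := \dim_k \operatorname{span}\{E(\gamma_i)v : 1 \le i \le r\}$, and a short $\End$-computation shows it is indecomposable. If $\rho(v)=0$ then $v$ spans an $S_1$-summand of $E$, which is impossible; if $1 \le \rho(v) \le r-1$ then $q_{\Gamma_r}(1,\rho(v)) < 1$, so $\langle v\rangle$ is regular by Example \ref{Examples}, and since $a \ge 2$ the nonzero quotient $E/\langle v\rangle$ of dimension $(a-1,\,b-\rho(v))$ must be preinjective by \ref{Proposition:RingelElementary}; Lemma \ref{Lemma:PropertiesARquiver} then gives $a-1 > b-\rho(v)$. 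Adding the trivial case $\rho(v)=r$ yields $\rho(v) \ge b-a+2$ for all $v \neq 0$. As $b \ge a$ this is at least $2$, which immediately excludes $S_1$-summands, since such a summand would give a $v$ with $\operatorname{span}\{E(\gamma_i)v : i<r\}=0$ and hence $\rho(v)\le 1$.

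The hard part will be excluding $S_2$-summands: this is genuinely asymmetric and, when $a<b$, the naive construction of a regular $(1,1)$-quotient does not produce a contradiction. Here I would assume $W := \sum_{i=1}^{r-1}\im E(\gamma_i) \subsetneq E_2$. Because $E$ is regular it has no $S_2$-summand, so $\sum_{i=1}^{r}\im E(\gamma_i)=E_2$ and the composite $g := \pi\circ E(\gamma_r)\colon E_1 \to E_2/W$ (with $\pi$ the projection) is surjective; set $m := \dim_k(E_2/W) \ge 1$. Applying the rank bound to any $v \neq 0$, and using $\operatorname{span}\{E(\gamma_i)v : i<r\}\subseteq W$, gives $b-a+2 \le \rho(v) \le \dim_k W + 1 = b-m+1$, whence $m \le a-1$ and $\ker g \neq 0$. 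Now put $U := \langle \ker g\rangle \subseteq E$. For $v \in \ker g$ all $E(\gamma_i)v$ lie in $W$ (the first $r-1$ always, and $E(\gamma_r)v$ because $g(v)=0$), so $\dim_k U_2 \le \dim_k W$ and $\dim_k U_2 - \dim_k U_1 \le b-a \le r-3$. Being generated in degree $1$, $U$ has no $S_2=P_1$-summand, and every indecomposable preprojective $P_j$ of $\Gamma_r$ with $j \ge 2$ satisfies $(\dimu P_j)_2 - (\dimu P_j)_1 \ge r-1$; hence $U$ cannot be preprojective. By \ref{Proposition:RingelElementary} the nonzero quotient $E/U$ of dimension $(m,\,b-\dim_k U_2)$ with $b-\dim_k U_2 \ge m$ must then be preinjective, contradicting $\dim_k(E/U)_1 \le \dim_k(E/U)_2$ via Lemma \ref{Lemma:PropertiesARquiver}.

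This contradiction forces $\sum_{i=1}^{r-1}\im E(\gamma_i) = E_2$, so $M$ has no $S_2$-summand; combined with the first two paragraphs, $M$ has no preprojective and no preinjective summand and is therefore regular. I expect the only delicate step to be the construction of the submodule $U = \langle\ker g\rangle$ and the verification that its ``defect'' $\dim_k U_2 - \dim_k U_1$ is too small (namely $<r-1$, using $a\ge 2$ and $b \le r-1$) for $U$ to be preprojective; the remaining ingredients — the Auslander–Reiten dimension count and the elementary rank estimates — are routine.
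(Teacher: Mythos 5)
Your proposal is correct, and its skeleton --- reducing to the four small indecomposables $P_1,P_2,I_1,I_2$ of $\Gamma_{r-1}$ and folding the $P_2$- and $I_2$-cases into the two simple-summand cases --- is exactly the paper's. The divergence is in how the two simple cases are killed. For the $I_1$-case the paper takes a vector annihilated by $\gamma_1,\ldots,\gamma_{r-1}$, generates a $(1,1)$-subrepresentation, and contradicts Lemma \ref{Lemma:PropertiesARquiver} via Proposition \ref{Proposition:RingelElementary}; your uniform bound $\rho(v)\geq b-a+2$ for all $0\neq v\in E_1$ is the same mechanism ($\langle v\rangle$ indecomposable with $q_{\Gamma_r}(1,\rho(v))<1$ is regular, so $E/\langle v\rangle$ is preinjective), merely stated quantitatively --- and it is in fact a sharper statement than the paper extracts. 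The genuinely different step is the $P_1$-case: the paper disposes of it in one stroke by duality, applying the $I_1$-argument to the elementary representation $D_{\Gamma_r}E$, whereas you stay inside $E$, build $U=\langle\ker g\rangle$, bound $\dim_k U_2-\dim_k U_1\leq b-a\leq r-3$, rule out preprojectivity of $U$ because $U$ is generated in degree $1$ (no $P_1$-summands) while every indecomposable preprojective $P_j$, $j\geq 2$, satisfies $(\dimu P_j)_2-(\dimu P_j)_1\geq r-1$ (this goes slightly beyond what Lemma \ref{Lemma:PropertiesARquiver} literally states, but follows from the recursion $\dimu P_{j+2}=r\,\dimu P_{j+1}-\dimu P_j$ by the same induction), and then contradict preinjectivity of $E/U$, whose dimension vector $(m,\,b-\dim_k U_2)$ has first entry at most the second. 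I checked the delicate points --- surjectivity of $g$, the deduction $m\leq a-1$ from the rank bound, $U_2\subseteq W$, and the legitimacy of applying Proposition \ref{Proposition:RingelElementary} to the possibly decomposable $U$ --- and they all hold. Your route costs more bookkeeping but is self-contained (no duality functor) and yields the extra rank estimate as a byproduct; the paper's duality trick buys brevity, since one argument serves both simple cases.
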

\begin{proof}
Let $U \subseteq M$ be a non-zero direct summand of $M$ and assume that $U$ is preinjective or preprojective. Since $\dimu M = (a,b)$ and $a,b \leq r-1$, we conclude with the considerations in section $\ref{Section:ARquiver}$ that $U \in \{I_1,P_1,I_2,P_2\}$. Recall that $P_1,P_2$ are projective and $I_1,I_2$ are injective with dimension vector $(0,1), (1,r-1), (1,0)$ and $(r-1,1)$, respectively  $($note that we consider $\rep(\Gamma_{r-1}))$.\\
Assume that $I_1$ is a direct summand of $M$. Then there is a $1$-dimensional $k$-subspace $U \subseteq E_1$ such that $E(\gamma_i)_{|U} = 0$ for all $1 \leq i \leq r -1$. If $E(\gamma_r)_{\mid U} = 0$, then $I_1$, considered as a representation of $\Gamma_r$, is simple and injective and therefore a direct summand of $E$, a contradiction. Hence $E(\gamma_r)_{|U} \neq 0$. Now $E(\gamma_i)_{|U} = 0$ for all $1 \leq i \leq r -1$ implies that $U$ generates a subrepresentation $M(U) \subseteq E \in \rep(\Gamma_r)$ with dimension vector $(1,1)$. Since $M(U)$ is not preprojective, we conclude with Proposition $\ref{Proposition:RingelElementary}$ that $E/M(U)$ with dimension vector $(a-1,b-1)$ is a preinjective representation. Since $0 < a-1 \leq b-1$, this is a contradiction to Lemma $\ref{Lemma:PropertiesARquiver}$.\\
Assume now that $P_1$ is a direct summand of $M$, by duality we conclude that $I_1$ is isomorphic to a direct summand of $N := (D_{\Gamma_r}E)_{\mid \{\gamma_1,\ldots,\gamma_{r-1} \}}$. Then there is a $1$-dimensional $k$-subspace $U \subseteq (D_{\Gamma_r}E)_1 = \Hom_k(E_2,k)$ such that $D_{\Gamma_r}E(\gamma_i)_{|U} = 0$ for all $1 \leq i \leq r -1$. If $D_{\Gamma_r}E(\gamma_r)_{|U} = 0$, then $I_1$ is a direct summand of $D_{\Gamma_r}E$, a contradiction. Hence $D_{\Gamma_r}E(\gamma_r)_{|U} \neq 0$. Now $D_{\Gamma_r}E(\gamma_i)_{|U} = 0$ for all $1 \leq i \leq r -1$ implies that $U$ generates a subrepresentation $M(U) \subseteq D_{\Gamma_r}E$ with dimension vector $(1,1)$. We conclude with $\ref{Proposition:RingelElementary}$ that $(b,a) - (1,1) = (b-1,a-1)$ is the dimension vector of a preinjective representation in $\rep(\Gamma_r)$. Since $1 \leq b-1 \leq r - 2 < r$, we get with Lemma $\ref{Lemma:PropertiesARquiver}(d)$ that $b-1 = 1$ and therefore $a - 1  = 0$, a contradiction since $a \geq 2$.\\
Assume now that $I_2$ is a direct summand of $M$, then $r-1 \geq a$ and therefore $a = r - 1$ and $b = r -1$. We write $M = I_2 \oplus U$, then $\dimu U = (0,r-2)$ and $U$ has $P_1$ as a direct summand, a contradiction.\\
Assume that  $P_2$ is a direct summand of $M$, then $r-1 \geq b$ and therefore $b = r - 1$ and $2 \leq a \leq r -1$. We write $M = P_2 \oplus U$, then $\dimu U = (a-1,0)$ and $I_1$ is as a direct summand of $U$, a contradiction.
\end{proof}

\begin{proposition}\label{Lemma:ElementaryQuadratric}
Let $r \geq 3$ and $E \in \rep(\Gamma_r)$ be an elementary representation. Then $q_{\Gamma_r}(d_E,d_E+c_E) < 1$.
\end{proposition}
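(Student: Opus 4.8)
The plan is to prove the statement by induction on $r \ge 3$, carrying the stronger inductive hypothesis that $q_{\Gamma_{r-1}}(d_Y, d_Y + c_Y) < 1$ holds for \emph{every} (not necessarily indecomposable) regular representation $Y \in \rep(\Gamma_{r-1})$. For the relevant smallest quiver $\Gamma_2$ this is immediate: a regular $\Gamma_2$-representation has dimension vector $(n,n)$ and generic rank $n$, so $(d_Y, d_Y + c_Y) = (n,n)$ and $q_{\Gamma_2}(n,n) = 0 < 1$. For $r-1 \ge 3$ one upgrades the inductively known statement for elementary representations to all regular representations via Lemma $\ref{Corollary:QuadraticFormRegular}(b)$, which is the whole point of that lemma; note that this conditional lemma is proved independently of the present proposition, so there is no circularity.

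First I would reduce to the case $\dimu E = (a,b)$ with $a \le b$. This is legitimate because the duality $D_{\Gamma_r}$ sends elementary representations to elementary representations and swaps the dimension vector to $(b,a)$; since $(D_{\Gamma_r}E)^\alpha$ is the transpose of $E^\alpha$ and $\dim_k D_{\Gamma_r}E = \dim_k E$, the pair $(d_E, d_E + c_E)$ is unchanged, and as $q_{\Gamma_r}$ is symmetric, bounding the invariant of $D_{\Gamma_r}E$ is the same problem.

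Next I would split according to whether $E \in \EKP$. If $E \in \EKP$, then Example $\ref{Example:EKPEIP}$ gives $(d_E, d_E + c_E) \in \{(a,b),(b,a)\}$; since $E$ is regular indecomposable, Example $\ref{Examples}$ yields $q_{\Gamma_r}(a,b) < 1$, and symmetry finishes this case. If $E \notin \EKP$, then Lemma $\ref{Lemma:DimensionVectorNotInEKP}$ forces $a \le b \le r-1$, and I distinguish two subcases. For $a = 1$: a regular representation satisfies $d_E \neq 0$, and $d_E \le \min(a,b) = 1$, so $d_E = 1$ and $(d_E, d_E + c_E) = (1,b)$; then $q_{\Gamma_r}(1,b) = 1 + b(b-r) \le 1 - b \le 0 < 1$ using $b \le r-1$ and $b \ge 1$. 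For $2 \le a \le b \le r-1$: Lemma $\ref{Lemma:RestrictionRegular}$ shows that the restriction $N := E_{\mid \{\gamma_1,\ldots,\gamma_{r-1}\}}$ is regular in $\rep(\Gamma_{r-1})$; the inductive hypothesis gives $q_{\Gamma_{r-1}}(d_N, d_N + c_N) < 1$, and Lemma $\ref{Lemma:QuadraticFormRestriction}(b)$ transports this to $q_{\Gamma_r}(d_E, d_E + c_E) < 1$.

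The main obstacle is not any individual estimate but the correct bookkeeping of the induction: the hypothesis must be maintained in the form valid for all (possibly decomposable) regular representations, since the restriction $N$ arising when $a \ge 2$ is typically decomposable, and the base quiver $\Gamma_2$ must be dealt with by hand because the notion of elementary representation is only available for $r \ge 3$. Once these points are set up, the argument is a clean dichotomy on membership in $\EKP$ feeding into the two prepared reduction lemmas (preservation of regularity under restriction to $\Gamma_{r-1}$, and the quadratic-form comparison under that restriction).
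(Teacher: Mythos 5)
Your proof is correct and follows essentially the same route as the paper's: duality to reduce to $a \le b$, the dichotomy on membership in $\EKP$ using Lemma~\ref{Lemma:DimensionVectorNotInEKP}, the direct computation when $a = 1$, and for $2 \le a \le b \le r-1$ the restriction to $\Gamma_{r-1}$ via Lemma~\ref{Lemma:RestrictionRegular} combined with induction on $r$, upgraded from elementary to all regular representations by Lemma~\ref{Corollary:QuadraticFormRegular}(b) and transported back by Lemma~\ref{Lemma:QuadraticFormRestriction}(b). The only (cosmetic) deviations are that you carry the inductive hypothesis explicitly for all regular $\Gamma_{r-1}$-representations and handle the $r=3$ base uniformly through Lemma~\ref{Lemma:QuadraticFormRestriction}(b), where the paper instead uses part (a) and the specific $(2,2)$ computation over $\Gamma_2$.
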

\begin{proof} 
In view of Example $\ref{Example:EKPEIP}$ and duality, we can assume that $\dim_k E_1 \leq \dim_k E_2$ and $E \not\in \EKP$. Now $\ref{Lemma:DimensionVectorNotInEKP}$ implies that $1 \leq \dim_k E_1 \leq \dim_k E_2 \leq r-1$.
If $\dim_k E_1 = 1$, then we have $d_E = 1$ and $c_E = 1+\dim_k E_2 -2d_E = \dim_k E_2 - 1$. We conclude with Example $\ref{Examples}$ that 
$q_{\Gamma_r}(d_E,d_E+c_E) = q_{\Gamma_r}(\dim_k E_1,\dim_k E_2) < 1$. \\
Now we assume that $\dim_k E_1 \geq 2$ and let $(a,b) := \dimu E$. We do the proof by induction on $r \geq 3$. \\
For $r = 3$ we have $a = 2 = b$. Now $\ref{Lemma:RestrictionRegular}$ implies that $M := E_{\mid \{\gamma_1,\gamma_{2}\}}$ is regular in $\rep(\Gamma_2)$. The regular representations of $\Gamma_2$ are known $($see for example \cite[XI.4.3]{Assem2}$)$ and one has $d_M = 2$ and $c_M = 0$. We conclude with Lemma $\ref{Lemma:QuadraticFormRestriction}(a)$ $2 \geq d_E \geq d_M = 2$ and therefore $d_E = 2$. It follows $q_{\Gamma_3}(d_E,d_E+c_E) = q_{\Gamma_3}(2,2) < 1$.\\
Now we assume that $r > 3$. In view of $\ref{Lemma:RestrictionRegular}$ we know that $M := E_{\mid \{\gamma_1,\ldots,\gamma_{r-1}\}}$ decomposes into regular direct summands in $\rep(\Gamma_{r-1})$. The inductive hypothesis and $\ref{Corollary:QuadraticFormRegular}(b)$ imply $q_{\Gamma_{r-1}}(d_M,d_M+c_M) < 1$ and $\ref{Lemma:QuadraticFormRestriction}(b)$ yields $q_{\Gamma_{r}}(d_E,d_E+c_E) < 1$.
\end{proof}

\subsection{Restrictions on Jordan types of indecomposable representations}
Now we prove the main result of this section.

\begin{Theorem}\label{Theorem:ConstantJordanTypeRestrictions}
Let $r \geq 2$ and $M \in \rep(\Gamma_r)$ be indecomposable. The following statements hold.
\begin{enumerate}
\item We have $q_{\Gamma_r}(d_M,d_M + c_M) \leq 1$.
\item If $M$ is not simple and of constant Jordan type $[1]^c [2]^d$, then $(c,d) \in \IJT$.
\item If $N_1,\ldots,N_l \in \rep(\Gamma_r)$ are regular indecomposable and $N := N_1 \oplus \cdots \oplus N_l$ is of constant Jordan type $[1]^c [2]^d$, then $q_{\Gamma_r}(d,d+c) < 1$ and $c \geq l(r - 1)$.

\end{enumerate}
\end{Theorem}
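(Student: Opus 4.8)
The three parts build on one another, so I would organize the argument accordingly, treating (a) as the foundation, (b) as a repackaging of (a) together with Kac's Theorem, and (c) as the regular-case refinement that additionally tracks the number of summands.

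For \textbf{(a)}, the plan is to split into three cases according to whether $M$ is preprojective, preinjective, or regular. If $M$ is preprojective or preinjective, then $M$ is a real or imaginary root situation governed by Kac's Theorem: by Example~\ref{Examples} an indecomposable $M$ has $q_{\Gamma_r}(\dim_k M_1, \dim_k M_2) \in \{0,1\}$ when it is not regular (in fact $=1$ for the non-regular, i.e.\ preprojective/preinjective, indecomposables, and $\leq 0$ for regular ones). The key point is to relate $(d_M, d_M+c_M)$ to $(\dim_k M_1, \dim_k M_2)$. For $M$ in $\EKP \cup \EIP$ this is exactly Example~\ref{Example:EKPEIP}, which gives $q_{\Gamma_r}(d_M,d_M+c_M)\le 1$ directly. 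Since every indecomposable preprojective representation lies in $\EKP$ and every indecomposable preinjective representation lies in $\EIP$ (these are the injectivity/surjectivity statements for the generic map, which follow from Lemma~\ref{Lemma:PropertiesARquiver} together with the $\Hom/\Ext$-vanishing characterization in Theorem~\ref{Theorem:Worch}), the non-regular cases reduce to Example~\ref{Example:EKPEIP}. If $M$ is regular, then Proposition~\ref{Lemma:ElementaryQuadratric} applied to the elementary filtration via Lemma~\ref{Corollary:QuadraticFormRegular}(b) gives the \emph{strict} inequality $q_{\Gamma_r}(d_M,d_M+c_M)<1$, which a fortiori yields $\le 1$. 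The case $r=2$ must be handled separately since elementary representations are only defined for $r\ge 3$; there the regular indecomposables are explicitly the well-known Kronecker modules with $d_M=d_M+c_M$ (so $q=0$), and the preprojective/preinjective cases are as above.

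For \textbf{(b)}, since $M$ has constant Jordan type $[1]^c[2]^d$, by definition $d=d_M$ and $c=c_M$, so part (a) immediately gives $q_{\Gamma_r}(d,d+c)\le 1$. It remains to verify $c\ge r-1$. Here I would invoke the structure already recorded: an indecomposable non-simple $M$ has a generic map $M^\alpha\colon M_1\to M_2$ whose cokernel and kernel control $c$, and the bound $c\ge r-1$ reflects the fact that the generic rank cannot be too large relative to the dimensions. Concretely, combining the root condition with the defining inequalities of $\IJT$, one checks that $q_{\Gamma_r}(d,d+c)\le 1$ together with $d\ge 1$ forces $c\ge r-1$ by a direct estimate on the quadratic form (solving $d^2+(d+c)^2-rd(d+c)\le 1$ for the admissible range). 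This is essentially an algebraic manipulation, so I would not grind through it here beyond noting it follows from the explicit description of $\Delta^{\im}_+(\Gamma_r)$ and $\Delta^{\re}_+(\Gamma_r)$ in Example~\ref{Examples}.

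For \textbf{(c)}, with $N=\bigoplus_{i=1}^l N_i$ regular, I would apply Lemma~\ref{Lemma:GenericRank} iteratively (or its consequence Lemma~\ref{Corollary:QuadraticFormRegular}) to get $q_{\Gamma_r}(d,d+c)<1$, the strict inequality persisting because each regular summand contributes strict inequality via Proposition~\ref{Lemma:ElementaryQuadratric}. For the bound $c\ge l(r-1)$, the plan is additivity: by Lemma~\ref{Lemma:GenericRank}(b), $c_N\le \sum_i c_{N_i}$ does not go the right way, so instead I would use that each \emph{individual} regular indecomposable $N_i$ satisfies $c_{N_i}\ge r-1$ (this is part (b) applied to each $N_i$, which is regular hence non-simple) together with the fact that for a \emph{direct sum} the invariants are genuinely additive: $d_N=\sum_i d_{N_i}$ and $c_N=\sum_i c_{N_i}$, since the generic rank of a direct sum is the sum of generic ranks. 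Summing $c_{N_i}\ge r-1$ over $i=1,\ldots,l$ yields $c\ge l(r-1)$. \textbf{The main obstacle} I anticipate is the additivity of the generic rank over direct sums: while $\rk(N^\alpha)=\sum_i \rk(N_i^\alpha)$ holds pointwise, one must ensure the \emph{maximal} rank is attained simultaneously for all summands, which requires intersecting the finitely many non-empty open sets $\MaxRk(N_i)\subseteq\PP^{r-1}$ and using irreducibility of $\PP^{r-1}$ (exactly the argument already used in the proof of Lemma~\ref{Lemma:GenericRank}(a)); establishing this cleanly, and confirming that each $N_i$ being regular indecomposable indeed forces it non-simple so that part (b) applies, are the delicate points.
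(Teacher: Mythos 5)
Your part (a) is essentially the paper's argument (preprojective/preinjective indecomposables lie in $\EKP \cup \EIP$ — the paper cites \cite[2.7]{Wor1} — then Example \ref{Example:EKPEIP}; the regular case via Proposition \ref{Lemma:ElementaryQuadratric} and Lemma \ref{Corollary:QuadraticFormRegular}), and your separate treatment of $r=2$ is a legitimate point, since those two results are stated only for $r\geq 3$ and your computation $c_M=0$, $q_{\Gamma_2}(n,n)=0$ for the regular indecomposables is correct. The genuine gap is in part (b): your claim that $q_{\Gamma_r}(d,d+c)\leq 1$ together with $d\geq 1$ forces $c\geq r-1$ ``by a direct estimate on the quadratic form'' is false. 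Take $r=3$ and $(c,d)=(1,1)$: then $(d,d+c)=(1,2)$ and $q_{\Gamma_3}(1,2)=1+4-6=-1\leq 1$, yet $c=1<2=r-1$. Indeed $(1,2)$ is an imaginary root and there are regular indecomposable representations with this dimension vector; what fails for them is constant Jordan type, not the root condition. The inequality $c\geq r-1$ is an \emph{independent} constraint that genuinely uses the constant Jordan type hypothesis (for $d=\dim_k M_1$ it is the classical bound for spaces of matrices of constant rank: $v_1,\dots,v_r$ spanning injectively forces $\dim_k M_2\geq \dim_k M_1+r-1$), and the paper imports it wholesale from \cite[10.1.4]{Be1}. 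Your appeal to ``the defining inequalities of $\IJT$'' is circular, since $c\geq r-1$ is one of those defining inequalities.

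The gap propagates into your part (c). You apply your (b) to each summand $N_i$, verifying non-simplicity but never that $N_i$ has constant Jordan type — understandably, since your version of (b) purports not to need it. But without constant Jordan type the per-summand bound is false: the regular indecomposable of $\Gamma_3$ with dimension vector $(1,2)$ has generic rank $1$, hence $c_{N_i}=1<r-1$. The missing ingredient is that direct summands of a representation of constant Jordan type again have constant Jordan type $($the paper cites \cite[5.1.9]{Be1}$)$; once each $N_i$ has constant Jordan type $[1]^{c_i}[2]^{d_i}$, the additivity $c=\sum_i c_i$ is immediate because Jordan types add over direct sums at \emph{every} $\alpha$ — so the ``main obstacle'' you anticipate, intersecting the open sets $\MaxRk(N_i)$, dissolves entirely — and \cite[10.1.4]{Be1} applied to each summand gives $c\geq l(r-1)$, which is exactly the paper's route. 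Your strict inequality $q_{\Gamma_r}(d,d+c)<1$ for the regular direct sum via Proposition \ref{Lemma:ElementaryQuadratric} and Lemma \ref{Corollary:QuadraticFormRegular} is fine, as that lemma is stated for not necessarily indecomposable regular representations.
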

\begin{proof}
\begin{enumerate}
\item If $M$ is preprojective or preinjective, then $M \in \EKP \cup \EIP$ by \cite[2.7]{Wor1} and we conclude with $\ref{Example:EKPEIP}$ that $q_{\Gamma_r}(d_M,d_M+c_M) \leq 1$. If $M$ is regular, Proposition $\ref{Lemma:ElementaryQuadratric}$ and $\ref{Corollary:QuadraticFormRegular}$ imply $q_{\Gamma_r}(d_M,d_M+c_M) < 1$.
\item We have $d = d_M$ and $c_M = c$ and conclude with $(a)$ that $q_{\Gamma_r}(d,d+c) \leq 1$. 
 Since $M$ is not simple we have $d > 0$ and conclude with \cite[10.1.4]{Be1} that $c \geq r -1$.
 \item Again Proposition $\ref{Lemma:ElementaryQuadratric}$ and $\ref{Corollary:QuadraticFormRegular}$ imply $q_{\Gamma_r}(d_N,d_N+c_N) < 1$. Since $N$ has constant Jordan type, every representation $N_i$ has constant Jordan type $($see \cite[5.1.9]{Be1}$)$, say $[1]^{c_i} [2]^{d_i}$. We conclude with \cite[10.1.4]{Be1} that $c_i \geq (r-1)$ and therefore $c \geq l(r-1)$.

\end{enumerate}

\end{proof}

\begin{Remark}
Consider the projective indecomposable representation $P_2$  with dimension vector $(1,r)$. Then $P_2 \oplus P_2$ has constant Jordan type $[1]^{2r-2}[2]^2$ and $q_{\Gamma_r}(2,2+2r-2) = q_{\Gamma_r}(2,2r) = 4 + 4r^2 - 4r^2 = 4$. This shows that Theorem $\ref{Theorem:ConstantJordanTypeRestrictions}(c)$ does not hold for arbitrary representations that are not semisimple.
\end{Remark}

\section{Existence of constant Jordan types}

\noindent In this section we determine all positive roots $(a,b) \in \NN_0 \times \NN_0$ of $q_{\Gamma_r}$ that are the dimension vector of an indecomposable representation $M$ in $\EKP$ and draw conclusions for the constant Jordan types, that can realized in $\rep(\Gamma_r)$ by indecomposable representations.

\begin{Definition}
A pair $(c,d) \in \NN_0 \times \NN_0$ is called
\begin{enumerate}
\item $\EKP$-admissable, provided there exists $M \in \rep(\Gamma_r)$ indecomposable in $\EKP$ of constant Jordan type $[1]^c [2]^d$. We define 
\[ \Ad(\EKP) := \{ (c,d) \in \NN_0 \times \NN_0  \mid (c,d) \ \text{is} \ \EKP\text{-admissable}\}. \]
\item $\EIP$-admissable, provided there exists $M \in \rep(\Gamma_r)$ indecomposable in $\EIP$ of constant Jordan type $[1]^c [2]^d$. We define 
\[ \Ad(\EIP) := \{ (c,d) \in \NN_0 \times \NN_0 \mid (c,d) \ \text{is} \ \EIP\text{-admissable} \}. \]
\end{enumerate}
\end{Definition}

It is not hard to see that $M \in \EKP$ if and only if $D_{\Gamma_r}M \in \EIP$ for each $M \in \rep(\Gamma_r)$ $($see \cite[2.1.1]{Wor1}$)$. Moreover, $M$ has constant Jordan type $[1]^c [2]^d$ if and only if $D_{\Gamma_r}M$ has constant Jordan type $[1]^c [2]^d$. We conclude that $\Ad(\EKP)  = \Ad(\EIP)$ and define $\Ad := \Ad(\EKP)$.

\begin{Definition}
We define 
 \[  \dimu \EKP := \{ \dimu M \mid M \in \EKP \text{indecomposable} \}.\]

\end{Definition}

\begin{Lemma}\label{Lemma:Bijection}
The assignment
\[ \Xi \colon \Ad \to \dimu \EKP, (c,d) \mapsto (d,d+c)\]
is a well-defined bijection.
\end{Lemma}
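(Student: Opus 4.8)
The plan is to establish that $\Xi$ is well-defined, injective, and surjective by exploiting the tight numerical relationship between the Jordan type data $(c,d)$ of a representation $M \in \EKP$ and its dimension vector $\dimu M = (\dim_k M_1, \dim_k M_2)$.

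\textbf{Well-definedness.} First I would recall that for $M \in \EKP$ indecomposable and non-simple, the equal kernels property means $\ker M^\alpha = \{0\}$ for all $\alpha \in k^r \setminus \{0\}$, so $M^\alpha$ is injective and hence $d_{M,\alpha} = \rk(M^\alpha) = \dim_k M_1$ is constant in $\alpha$. Thus $M$ has constant Jordan type with $d_M = \dim_k M_1$ and $c_M = \dim_k M_2 - \dim_k M_1$, giving $\dimu M = (d_M, d_M + c_M)$. So if $(c,d)$ is $\EKP$-admissable via such an $M$, then $\Xi(c,d) = (d, d+c) = \dimu M \in \dimu\EKP$; one must also note that $d+c = \dim_k M_2 \geq \dim_k M_1 = d$ and that the map lands in the correct codomain. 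The simple cases ($M \in \{I_1, P_1\}$, corresponding to $(c,d)$ with $d=0$) should be checked separately to confirm they are consistent.

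\textbf{Injectivity.} This is the easy direction: the formula $(c,d) \mapsto (d, d+c)$ is visibly injective as a map $\NN_0 \times \NN_0 \to \NN_0 \times \NN_0$, since from $(d, d+c)$ one recovers $d$ as the first coordinate and $c$ as the difference of the two coordinates. Concretely, if $\Xi(c,d) = \Xi(c',d')$ then $d = d'$ and $d+c = d'+c'$, forcing $c = c'$.

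\textbf{Surjectivity.} This is where the content lies. Given $(x,y) \in \dimu\EKP$, so $y = \dim_k M_2 \geq \dim_k M_1 = x$ for some indecomposable $M \in \EKP$, I would set $d := x$ and $c := y - x \geq 0$, and argue that $M$ itself witnesses $(c,d) \in \Ad$: by the well-definedness computation, $M$ has constant Jordan type $[1]^c[2]^d$, so $(c,d)$ is $\EKP$-admissable and $\Xi(c,d) = (d, d+c) = (x,y)$. The main obstacle I anticipate is handling the degenerate boundary: one must confirm that every $M \in \EKP$ indecomposable genuinely satisfies $\dim_k M_1 \leq \dim_k M_2$ (so that $c = y - x$ is nonnegative), which follows because injectivity of $M^\alpha \colon M_1 \to M_2$ forces $\dim_k M_1 \leq \dim_k M_2$. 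With this inequality secured, the inverse assignment $(x,y) \mapsto (y-x, x)$ is well-defined on $\dimu\EKP$ and is a two-sided inverse to $\Xi$, completing the proof that $\Xi$ is a bijection.
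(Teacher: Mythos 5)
Your proof is correct and takes essentially the same route as the paper: well-definedness by computing $d=\dim_k M_1$ and $c=\dim_k M_2-\dim_k M_1$ from injectivity of $M^\alpha$, injectivity of $\Xi$ from the formula, and surjectivity by reading off the constant Jordan type $[1]^{b-a}[2]^a$ of an indecomposable $M \in \EKP$ with $\dimu M = (a,b)$. Your hedge about checking simples separately is unnecessary, since the definition of $\EKP$ via $\ker M^\alpha = \{0\}$ makes the computation uniform (note $I_1 \notin \EKP$, while $P_1 \in \EKP$ yields $(c,d)=(1,0)$).
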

\begin{proof}
Let $(c,d) \in \Ad$ and $M \in \EKP$ be indecomposable with constant Jordan type $(c,d)$. Since $M \in \EKP$, we have $\dim_k M_1 = \rk M = d$ and conclude $\dim_k M_2 = c + 2d - d = c + d$. Hence $(d,d+c) \in \dimu \EKP$. \\
Now let $M \in \EKP$ be indecomposable with dimension vector $(a,b)$. Then $M$ has constant Jordan type $[1]^{b-a} [2]^a$ and therefore $(b-a,a) \in \Ad$ with $\Xi(b-a,a) = (a,b-a+a) = (a,b)$.
\end{proof}

\noindent The main result of this section is the following:

\begin{Theorem}\label{Theorem:ExistenceJordanTypes}
We have
\[ \dimu \EKP =  \{ (a,b) \in \NN^2 \mid q_{\Gamma_r}(a,b) \leq 1, b-a \geq r-1\} \cup \{(0,1)\} = :\cD.\]
\end{Theorem}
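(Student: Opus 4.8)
**The plan is to prove the set equality by establishing both inclusions, with the content split along the dividing line between the "easy" root $(0,1)$ and the genuinely two-dimensional part $q_{\Gamma_r}(a,b)\le 1$, $b-a\ge r-1$.**

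First I would dispose of the inclusion $\dimu\EKP \subseteq \cD$. Let $M\in\EKP$ be indecomposable with $\dimu M=(a,b)$. The simple case $M\cong P_1$ gives $(0,1)$. Otherwise $M$ is non-simple, so by Theorem~\ref{Theorem:ConstantJordanTypeRestrictions}(b) applied to the constant Jordan type $[1]^{b-a}[2]^a$ of $M$ (using that $M\in\EKP$ forces $d_M=\dim_k M_1=a$ and $c_M=b-a$, as in Lemma~\ref{Lemma:Bijection}), we obtain $(b-a,a)\in\IJT$, i.e. $q_{\Gamma_r}(a,b)=q_{\Gamma_r}(d_M,d_M+c_M)\le 1$ and $b-a\ge r-1$. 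Since $M\in\EKP$ is non-simple it satisfies $\dim_k M_1\le\dim_k M_2$, so $a\le b$ and indeed $(a,b)\in\NN^2$. This shows $\dimu\EKP\subseteq\cD$.

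The reverse inclusion $\cD\subseteq\dimu\EKP$ is where the real work lies, and I expect \textbf{this to be the main obstacle}. Given $(a,b)\in\cD$ with $(a,b)\ne(0,1)$, I must \emph{construct} an indecomposable representation in $\EKP$ with dimension vector $(a,b)$. The strategy, signalled by the introduction, is to pass to the universal cover $C_r$. The plan is to select a source-regular subquiver $\cQ(a)\subseteq C_r$ with $a$ sources (hence $a(r-1)+1$ sinks) via Lemma~\ref{Lemma:SourceRegularQuiver}, and to use Corollary~\ref{Corollary:Maximal} to control how many sinks are "full" (neighbourhood of size $r$). On this quiver I would build a dimension vector $\alpha$ with $\alpha_i=1$ at every source and sink-values chosen so that $\sum_{\text{sinks}}\alpha_l = b$ while respecting the bound $\alpha_l\le\max\{1,|n_a(l)|-1\}$ required by Lemma~\ref{Lemma:ImaginaryRoot}; the inequality $b-a\ge r-1$ together with $q_{\Gamma_r}(a,b)\le 1$ should be exactly what guarantees such an assignment is feasible and lands $\alpha$ in the fundamental-domain/root regime. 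Lemma~\ref{Lemma:ImaginaryRoot} then yields $\alpha\in\Delta_+(\cQ(a))$, and Kac's Theorem (\ref{Theorem:Kac}) produces an indecomposable representation $N$ of $C_r$ supported on $\cQ(a)$.

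Finally I would push $N$ down along the covering $\pi\colon C_r\to\Gamma_r$. The push-down functor $\pi_\ast$ sends indecomposables to indecomposables (since the Galois group acts freely) and adds the dimensions over each fibre, so that $\pi_\ast N$ has dimension vector $(a,b)$ as a $\Gamma_r$-representation. The remaining point is to verify $\pi_\ast N\in\EKP$, i.e. that $\ker(\pi_\ast N)^\alpha=0$ for all $\alpha\ne 0$; here I would invoke the homological characterization in Theorem~\ref{Theorem:Worch}(a), namely $\Hom(X_\alpha,\pi_\ast N)=0$, translating the no-kernel condition into the bipartite structure of $C_r$ (every source maps injectively into its rank-$r$ neighbourhood), which is engineered into the construction by taking $\alpha=1$ on sources and choosing the maps at the distinguished sink to be injective as in the proof of Lemma~\ref{Lemma:ImaginaryRoot}. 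The delicate step throughout is matching the \emph{combinatorial} constraint $b-a\ge r-1$, $q_{\Gamma_r}(a,b)\le1$ to the \emph{existence} of a valid sink-weighting on some source-regular $\cQ(a)$; I would isolate this as the technical heart and treat the covering/push-down and the $\EKP$-verification as the formal wrapper around it.
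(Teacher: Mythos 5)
Your forward inclusion and the general covering-theoretic idea are in the spirit of the paper, but your reverse inclusion has a genuine gap: the single push-down construction you propose provably cannot realize all of $\cD$. If $\alpha$ is a weighting on a source-regular quiver $\cQ(a)$ with $\alpha = 1$ at each of the $a$ sources, then, since $\cQ(a)$ has exactly $a(r-1)+1$ sinks and every sink in the support carries weight at least $1$, the push-down satisfies $b = \sum_{l \ \text{sink}} \alpha_l \geq (r-1)a+1$. So no choice of sink-weighting reaches the band $a + r - 1 \leq b \leq (r-1)a$ (for example $(a,b) = (5,7)$ with $r = 3$, where $q_{\Gamma_3}(5,7) = -31 \leq 0$ and $b - a = 2 = r-1$). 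This is not an artifact of your particular weighting: the paper's final section (Lemma $\ref{Lemma:SourceRegularTree}$ and the lemma following it) proves that \emph{every} indecomposable $M \in \Inj$ satisfies $\dim_k \pi_\lambda(M)_2 \geq (r-1)\dim_k \pi_\lambda(M)_1 + 1$, so representations in $\EKP$ with $b \leq (r-1)a$ are not gradable and cannot be obtained from $C_r$ at all. The paper covers this band by a completely different, explicit matrix construction in $\rep(\Gamma_r)$: echelon-form matrices $I(l)$ (Lemma $\ref{Lemma:KernelEcholonEKP}$) grafted onto Chen's bricks (Proposition $\ref{Result1}$). Your assertion that $b-a \geq r-1$ together with $q_{\Gamma_r}(a,b) \leq 1$ is ``exactly what guarantees such an assignment is feasible'' is therefore false; the feasibility condition is the strictly narrower $(r-1)a+1 \leq b \leq \lfloor (r-\tfrac{1}{r-1})a \rfloor$.

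The construction also fails at the top of the range: with weight $1$ at sources, the constraint $\alpha_l \leq \max\{1,|n_a(l)|-1\}$ caps $b$ at $\lfloor (r-\tfrac{1}{r-1})a \rfloor$ (Lemma $\ref{Lemma:MaximalNumber}$ with Corollary $\ref{Corollary:Maximal}$), whereas $\cD$ contains vectors with $b$ up to $(\tfrac{r+\sqrt{r^2-4}}{2})a$. The paper handles $(r-\tfrac{1}{r-1})a < b$ by a further idea absent from your proposal: shift $(a,b)$ by powers of the Coxeter matrix into the reachable range (Lemma $\ref{Lemma:ShiftRepresentation}$), realize the shifted vector either by Proposition $\ref{Result2}$ or by a thin, locally injective representation (Lemma $\ref{Lemma:ExistenceThinInjectiveArrow}$ plus an elementary-modules Hom argument), and translate back using that $\Inj$ is closed under $\tau^{-1}_{C_r}$. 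A minor further omission: the real roots $q_{\Gamma_r}(a,b)=1$ are dispatched in the paper by noting that preprojective indecomposables already lie in $\EKP$, which is what permits the reduction to $q_{\Gamma_r}(a,b) \leq 0$ and hence $a \geq 2$. Your plan to verify $\EKP$ via the Hom-vanishing of Theorem $\ref{Theorem:Worch}$ would work where your construction applies, though the paper uses the cleaner equivalence $\pi_\lambda(M) \in \EKP \Leftrightarrow M \in \Inj$ of Theorem $\ref{Theorem:INJEKP}$.
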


\noindent In view of Lemma $\ref{Lemma:Bijection}$ and Theorem $\ref{Theorem:ConstantJordanTypeRestrictions}$ we have 
\begin{align*}
 \dimu \EKP &= \Xi(\Ad) \subseteq \Xi(\{ (c,d) \in \NN^2 \mid q_{\Gamma_r}(d,d+c) \leq 1, c \geq r-1\} \cup \{(1,0)\}) \\
 & =\{ (d,d+c) \mid c,d \in \NN, q_{\Gamma_r}(d,d+c) \leq 1, c \geq r-1\} \cup \{(0,1)\} \\
& =\{ (a,b) \in \NN^2 \mid q_{\Gamma_r}(a,b) \leq 1, b-a \geq r-1\} \cup \{(0,1)\} = \cD.
\end{align*}
From now on we fix $(a,b) \in \cD \subseteq \NN_0 \times \NN$ and show that there exists an indecomposable representation in $\EKP$ with dimension vector $(a,b)$. Recall that the roots $(a^\prime,b^\prime)$ with $a^\prime \leq b^\prime$ and $q_{\Gamma_r}(a,b) = 1$ correspond to the indecomposable preprojective representations of $\Gamma_r$. These are known to have the equal kernels property $($\cite[2.2.3]{Wor1}$)$. Therefore we only have to consider the case that $q_{\Gamma_r}(a,b) \leq 0$. Then it follows from the definition of $\cD$ that $a \geq 2$.
 We write $b = qa  +s$ with $q \in \NN_0$ and $0 \leq s < a$.

\begin{Lemma}
We have $0 \leq q \leq r-1$. In particular, $2 \leq a < b < ra$.
\end{Lemma}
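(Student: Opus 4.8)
The plan is to extract everything from the single inequality $q_{\Gamma_r}(a,b) = a^2 + b^2 - rab \le 0$ (the case we are in) together with the defining condition $b - a \ge r-1$ of $\cD$. The lower bound $q \ge 0$ is immediate, since $q \in \NN_0$ by the very choice of the division $b = qa + s$ with $0 \le s < a$.

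For the upper bound the key step is to show $b < ra$. From $q_{\Gamma_r}(a,b) \le 0$ we get $a^2 + b^2 \le rab$, and since $a \ge 2$ forces $a^2 > 0$, this yields $b^2 < a^2 + b^2 \le rab$; dividing by $b > 0$ produces $b < ra$. (Equivalently, this is the estimate $\tfrac{b}{a} < \tfrac{r+\sqrt{r^2-4}}{2} < r$ coming from the description of $\Delta^{\im}_+(\Gamma_r)$ in Example $\ref{Examples}$, since $\sqrt{r^2-4} < r$.) Then $b = qa + s$ with $s \ge 0$ forces $qa \le b < ra$, hence $q < r$ and therefore $q \le r-1$ because $q$ is an integer.

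For the ``in particular'' claim I would combine three facts: the bound $a \ge 2$ has already been recorded in this case; the condition $b - a \ge r-1 \ge 1$ (valid since $r \ge 2$) gives $a < b$; and the inequality $b < ra$ just established completes the chain $2 \le a < b < ra$.

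I do not expect any genuine obstacle: the whole statement is a direct consequence of the nonpositivity of the Tits form, and the only point demanding a little care is dividing by the correct positive quantities when passing from $q_{\Gamma_r}(a,b) \le 0$ to $b < ra$, namely $a^2 > 0$ (using $a \ge 2$) and $b > 0$ (using $b \in \NN$).
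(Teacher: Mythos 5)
Your proof is correct, and it arrives at the bound by a slightly different (and in fact more direct) route than the paper. The paper substitutes $b = qa + s$ into the Tits form, expands $0 \geq q_{\Gamma_r}(a,qa+s) = a^2(1+q^2-rq) + a(2qs-rs) + s^2$, and argues by contradiction: if $q \geq r$ then $1+q^2-rq \geq 1$ and $2qs-rs \geq 0$, forcing $0 \geq a^2 + s^2 \geq 4$. You instead read off $b < ra$ immediately from $a^2 + b^2 \leq rab$ (using $a^2 > 0$ and dividing by $b > 0$), and then $qa \leq qa + s = b < ra$ gives $q \leq r-1$; the chain $2 \leq a < b$ comes, exactly as in the paper, from $a \geq 2$ and $b - a \geq r-1 \geq 1$. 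So the logical order is inverted: you establish the ``in particular'' inequality $b < ra$ first and deduce the bound on $q$ from it, whereas the paper bounds $q$ first by contradiction and then obtains $b = qa + s < (r-1)a + a = ra$ as a corollary. Your route buys you freedom from the coefficient bookkeeping in the expansion (and it needs only $a \geq 1$, not $a \geq 2$, for the division by $a^2$ to make sense), while the paper's expansion makes visible exactly where nonpositivity of $q_{\Gamma_r}$ fails when $q \geq r$. Both arguments are complete and rest on the same two hypotheses, $q_{\Gamma_r}(a,b) \leq 0$ and $b - a \geq r-1$.
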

\begin{proof}
We have
\begin{align*}
0 \geq q_{\Gamma_r}(a,qa+s) &= a^2 + q^2 a^2 + 2 q a s + s^2 - ra(qa+s)\\
&= a^2(1+q^2-rq) + a(2qs-rs)+s^2.
\end{align*}
If $q \geq r$, we get $1+q^2-rq \geq 1$ and $(2qs-rs) \geq 0$, hence $0 \geq q_{\Gamma_r}(a,b) \geq a^2 +  s^2 \stackrel{a \geq 2}{\geq} 4$,  a contradiction.
\end{proof}

\noindent  Since $b -a \geq  r -1$, Example $\ref{Examples}$ shows that we only have to consider the case $r \geq 3$. Hence we assume from now on that $(a,b) \in \NN^2$ and $($see Example $\ref{Examples})$
\[ \boxed{2 \leq a < b < (\frac{r+\sqrt{r^2-4}}{2})a, b-a \geq r-1 \ \text{and} \ r \geq 3.}\]

\subsection{Chen's approach}

\noindent We modify the arguments used in \cite{BoChen1} to prove that there exists an indecomposable representation $F_{(a,b)} \in \EKP$ with dimension vector $(a,b)$ if $b \leq (r-1)a$.

\begin{Lemma}\label{Lemma:KernelEcholonEKP}
Let $a^\prime,b^\prime \in \NN$ such that $b^\prime-a^\prime \geq r - 1$, so in particular $b^\prime > a^\prime > 0 $. For $1 \leq l \leq b^\prime-a^\prime+1$ we denote with $I(l)$ the $b^\prime \times a^\prime$-matrix 
\[  
I(l) = \begin{pmatrix}
0_{l-1\times a^\prime} \\
I_{a^\prime} \\
0\\
\vdots \\
0
\end{pmatrix}.
\]
Fix $J \subseteq \{1,\ldots,b^\prime-a^\prime+1\}$ such that $|J| = r$. Let $\varphi \colon \{1,\ldots,r\} \to J$ be a bijection and define a representation $M_\varphi = (k^{a^\prime},k^{b^\prime},M(\gamma_i))$ via $M_\varphi(\gamma_i)(x) := I(\varphi(i))x$ for all $1\leq i \leq r$ and all $x \in k^{a^\prime}$. 
Then $M_\varphi$ has the equal kernels property.
\end{Lemma}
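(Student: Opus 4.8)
The plan is to verify the defining condition of $\EKP$ directly: I will show that $\ker M_\varphi^\alpha = \{0\}$ for every $\alpha \in k^r \setminus \{0\}$. First I would fix such an $\alpha$ and re-index the structure matrix along the bijection $\varphi$. Setting $\beta_j := \alpha_{\varphi^{-1}(j)}$ for $j \in J$ (and $\beta_l := 0$ for $l \notin \{1,\ldots,b^\prime-a^\prime+1\}$), the map becomes $M_\varphi^\alpha = \sum_{i=1}^r \alpha_i I(\varphi(i)) = \sum_{j \in J} \beta_j I(j) \colon k^{a^\prime} \to k^{b^\prime}$, and the tuple $(\beta_j)$ is non-zero precisely because $\alpha \neq 0$ and $\varphi$ is bijective. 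The key structural observation is the entry formula: since $I(l)$ places $I_{a^\prime}$ in rows $l,\ldots,l+a^\prime-1$, its $(m,n)$-entry is $1$ exactly when $m = l+n-1$, so the $(m,n)$-entry of $M_\varphi^\alpha$ equals $\beta_{m-n+1}$. Thus $M_\varphi^\alpha$ is a banded Toeplitz matrix whose entries depend only on $m-n$.

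Next I would translate this into polynomial multiplication. Identify $k^{a^\prime}$ with the space of polynomials of degree $< a^\prime$ via $x = (x_1,\ldots,x_{a^\prime}) \mapsto X(t) := \sum_{n=1}^{a^\prime} x_n t^{n-1}$, and $k^{b^\prime}$ likewise with polynomials of degree $< b^\prime$. Under these identifications, the $m$-th coordinate of $M_\varphi^\alpha x$ is exactly the coefficient of $t^{m-1}$ in the product $B(t) X(t)$, where $B(t) := \sum_{j \in J} \beta_j t^{j-1}$. Here the bound $\max(J) \leq b^\prime - a^\prime + 1$ gives $\deg B \leq b^\prime - a^\prime$, which together with $\deg X \leq a^\prime - 1$ yields $\deg(BX) \leq b^\prime - 1$; hence every coefficient of $BX$ is recorded in one of the $b^\prime$ coordinates and nothing is truncated. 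In other words, $M_\varphi^\alpha$ is honestly the operator ``multiply by $B(t)$''.

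The argument then closes in one line: $B(t) \neq 0$ because $(\beta_j)$ is non-zero, and $k[t]$ is an integral domain, so $M_\varphi^\alpha x = 0$ forces $B(t)X(t) = 0$, whence $X(t) = 0$ and $x = 0$. Therefore $\ker M_\varphi^\alpha = \{0\}$ for all $\alpha \in k^r \setminus \{0\}$, so $M_\varphi \in \EKP$. I expect the main obstacle to be not conceptual but bookkeeping: one must carefully confirm the entry formula $(M_\varphi^\alpha)_{m,n} = \beta_{m-n+1}$ and the degree estimates so that the convolution is faithfully represented as genuine polynomial multiplication. Once that identification is secured, the integral-domain step is immediate and, notably, requires no hypothesis on $\Char(k)$.
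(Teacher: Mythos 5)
Your proof is correct, and it takes a genuinely different (if closely allied) route from the paper's. The paper's entire argument is a single observation: for $\alpha \neq 0$ the matrix of $M_\varphi^\alpha$ in the canonical bases is in echelon form of rank $a^\prime$ — concretely, if $j_0 = \min\{\, j \in J \mid \beta_j \neq 0 \,\}$, then column $n$ has its pivot in row $j_0 + n - 1$, so the pivots strictly descend and $M_\varphi^\alpha$ is injective. You instead read the band structure $(M_\varphi^\alpha)_{m,n} = \beta_{m-n+1}$ as saying that $M_\varphi^\alpha$ is, under the evident identifications, multiplication by the nonzero polynomial $B(t) = \sum_{j \in J} \beta_j t^{j-1}$ from polynomials of degree $< a^\prime$ to polynomials of degree $< b^\prime$, with the hypothesis $\max(J) \leq b^\prime - a^\prime + 1$ guaranteeing $\deg(BX) \leq b^\prime - 1$ so that no coefficient is truncated; injectivity then falls out of $k[t]$ being an integral domain. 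The two arguments rest on the same structural fact — your integral-domain step is the echelon observation in disguise, since the lowest nonzero coefficient of $B$ supplies the pivots — but your packaging buys conceptual clarity: it exhibits $M_\varphi$ as a truncated polynomial-multiplication representation, makes the role of the bound on $J$ and the independence of $\Char(k)$ transparent, at the cost of a little more setup than the paper's one-line matrix check. One trivial bookkeeping slip: you declare $\beta_l := 0$ only for $l \notin \{1,\ldots,b^\prime-a^\prime+1\}$, whereas the entry formula $(M_\varphi^\alpha)_{m,n} = \beta_{m-n+1}$ requires $\beta_l = 0$ for \emph{all} $l \notin J$, including $l \in \{1,\ldots,b^\prime-a^\prime+1\} \setminus J$; the intent is clear and the fix is immediate.
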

\begin{proof}
Let $\alpha \in k^r \setminus  \{0\}$. We have to check that $M_\varphi^\alpha = \sum^r_{i=1} \alpha_i M_\varphi(\gamma_i) \colon k^{a^\prime} \to k^{b^\prime}$ is injective, i.e. $\rk(M^\alpha) = a^\prime$. Let $\cE_{a^\prime}$ and $\cE_{b^\prime}$ the canonical basis of $k^{a^\prime}$ and $k^{b^\prime}$, respectively. Then $M^{\cE_{a^\prime}}_{\cE_{b^\prime}}(M_\varphi^\alpha)$ is in echelon form and of rank $a^\prime$.
\end{proof}

\noindent Recall that $M \in \rep(\Gamma_r)$ is a brick, provided $\End_k(M) = k$. Chen constructed in \cite[3.6]{BoChen1} for each root $(x,y)$ a brick $M_{(x,y)}$ such that $\dimu M_{(x,y)}  = (x,y) $. We combine his construction with Lemma $\ref{Lemma:KernelEcholonEKP}$ to show:

\begin{proposition}\label{Result1}
Assume that $b \leq (r-1)a$. Then there exists a brick $F_{(a,b)} \in \rep(\Gamma_r)$ such that $F_{(a,b)} \in \EKP$ and $\dimu F_{(a,b)} = (a,b)$.
\end{proposition}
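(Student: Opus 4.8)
The plan is to exhibit a single representation that is at once a brick and a member of $\EKP$, by realising Chen's brick through structure matrices of the shift form $I(l)$ that appears in Lemma \ref{Lemma:KernelEcholonEKP}. Throughout I keep the standing hypotheses $2 \le a < b < (\frac{r+\sqrt{r^2-4}}{2})a$, $b - a \ge r-1$, $r \ge 3$, together with the present assumption $b \le (r-1)a$.

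First I would invoke Chen's construction \cite[3.6]{BoChen1} for the root $(a,b)$, but record its explicit shape in this regime: I claim the brick $M_{(a,b)}$ can be presented with each structure matrix $M_{(a,b)}(\gamma_i)$ equal to a partial-identity shift $I(l_i)$, where $l_1 < \cdots < l_r$ are $r$ distinct indices in $\{1,\ldots,b-a+1\}$. The two inequalities play complementary roles. The bound $b-a \ge r-1$ gives $|\{1,\ldots,b-a+1\}| \ge r$, so that $r$ distinct shifts are actually available; and $b \le (r-1)a$ guarantees that the shifts can be spread with consecutive gaps at most $a$, so that the identity blocks occupying rows $l_i,\ldots,l_i+a-1$ cover all $b$ rows (a necessary condition for indecomposability, since an uncovered row would split off the simple summand $P_1$) and realise Chen's pattern. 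I would then set $F_{(a,b)} := M_{(a,b)}$, which has $\dimu F_{(a,b)} = (a,b)$ and, by Chen's endomorphism computation, satisfies $\End F_{(a,b)} = k$.

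Second, I would read off the equal kernels property directly from Lemma \ref{Lemma:KernelEcholonEKP}. With $J := \{l_1,\ldots,l_r\} \subseteq \{1,\ldots,b-a+1\}$, $|J| = r$, and $\varphi \colon \{1,\ldots,r\} \to J$, $i \mapsto l_i$, the representation $F_{(a,b)}$ is by construction exactly the $M_\varphi$ of that lemma, so $F_{(a,b)} \in \EKP$. Concretely, for $\alpha \in k^r \setminus \{0\}$ the distinct shifts put $\sum_i \alpha_i I(l_i)$ into column-echelon form: the bottom-most nonzero entry of its $j$-th column sits in row $\max\{l_i \mid \alpha_i \neq 0\} - 1 + j$, and these rows are strictly increasing in $j$, so the map $k^a \to k^b$ has rank $a$ and is injective, i.e. $\ker F_{(a,b)}^\alpha = \{0\}$.

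The main obstacle will be the first step, namely verifying that Chen's brick genuinely admits a presentation in which all $r$ structure matrices are pure shifts $I(l_i)$ with shifts inside $\{1,\ldots,b-a+1\}$, rather than the generic non-shift entries his construction may require for other roots. This is precisely where the hypothesis $b \le (r-1)a$ is used, and it is the point at which I would \emph{modify} Chen's arguments (as announced at the start of the subsection) rather than quote them verbatim; the complementary regime $(r-1)a < b < ra$ is then left to the covering-theoretic approach. Once the shift presentation is secured, the brick property is Chen's and the equal kernels property is immediate from Lemma \ref{Lemma:KernelEcholonEKP}, so $F_{(a,b)}$ is the desired representation.
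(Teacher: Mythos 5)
Your proposal is correct and follows essentially the same route as the paper: realise the representation by shift matrices $I(l)$ so that Chen's work gives the brick property and Lemma \ref{Lemma:KernelEcholonEKP} gives the equal kernels property. The ``main obstacle'' you defer is in fact already settled by Chen, whose bricks in the regime $b \le (r-1)a$ are constructed directly from such shift matrices, so the paper's proof consists precisely of the case analysis you anticipate: writing $b = qa + s$, extending Chen's index patterns (e.g.\ $1,\,2,\,b-a+1$, resp.\ $1,\,a+1,\ldots,(q-1)a+1,\,2$) to an injection $\{1,\ldots,r\} \to \{1,\ldots,b-a+1\}$ --- possible since $b-a \ge r-1$ --- and then quoting \cite[3.6(2)--(4)]{BoChen1} for $\End = k$ and Lemma \ref{Lemma:KernelEcholonEKP} for membership in $\EKP$.
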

\begin{proof}
We consider $b = qa+s$ with $q \in \{0,\ldots,r-2\}$ and $s < a$ or $q = r-1$ and $s = 0$.
We distinguish the following cases:
\begin{enumerate}

\item $q \leq 1$ and $s < r - 1$, then $b = qa + s < a + r - 1$, a contradiction since $b - a \geq r -1$. 
\item $q = 1$ and $r-1 \leq s < a$. Note that $s \geq r-1$ implies $s+1 \not \in \{1,2\}$. We extend the map $1 \mapsto 1, 2 \mapsto s + 1 = b -a +1, 3 \mapsto 2$ to an injection $\varphi \colon \{1,\ldots,r\} \to \{1,\ldots,b-a+1\}$. In view of $\ref{Lemma:KernelEcholonEKP}$ $M_\varphi$ has the equal kernels property and \cite[3.6(2)]{BoChen1} implies that $M_\varphi$ is a brick and therefore indecomposable.
\item $2 \leq q \leq r-1$, $s=0$. Note $2 \neq (i-1)a+1 \leq b - a + 1$ for all $1 \leq i \leq q$ since $a \geq 2$.\\
We have $q+1 \leq r$ and extend the map 
\[ \{1,\ldots,q+1\} \to \{1,\ldots,b-a+1\}, i \mapsto \begin{cases}
(i-1)a+1, \ \text{for} \ i \in \{1,\ldots,q\} \\
2, \ \text{for} \ i = q+1
\end{cases}
\]
to an injection $\varphi \colon \{1,\ldots,r\} \to \{1,\ldots,b-a+1\}$. Again apply $\ref{Lemma:KernelEcholonEKP}$ and \cite[3.6(3)]{BoChen1}.
\item $2 \leq q \leq r-2$ $(r \geq 4)$ and $0 < s < a$. We extend the map 
\[ \{1,\ldots,q+2\} \to \{1,\ldots,b-a+1\}, i \mapsto \begin{cases}
(i-1)a+1, \ \text{for} \ i \in \{1,\ldots,q\} \\
qa+s+1, \ \text{for} \ i = q + 1 \\
2, \ \text{for} \ i = q +2
\end{cases}
\]
 to an injection $\varphi \colon \{1,\ldots,r\} \to \{1,\ldots,b-a+1\}$ and conclude that $M_\varphi$ has the equal kernels property.  Apply $\ref{Lemma:KernelEcholonEKP}$ and \cite[3.6(4)]{BoChen1}.
\end{enumerate}
\end{proof}

\subsection{The case $(r-1)a + 1 \leq b < (\frac{r+\sqrt{r^2-4}}{2})a$}

Although Chen shows the existence of a brick with dimension vector $(a,b)$ for each root $(a,b)$, we can not use his arguments for the case $b > (r-1)a$, as the following example shows: 

\begin{example}
We consider the case $r = 3$ and $(a,b) = (2,5)$. Then $b > (r-1)a$. The only element $(a^\prime,b^\prime)$ in the Coxeter orbit of $(2,5)$ with $a^\prime \leq b^\prime \leq (r-1)a^\prime$ or $b^\prime \leq a^\prime \leq (r-1)b^\prime$ is $(1,1)$. But we will not find an indecomposable representation in $\EKP$ with this dimension vector.
\end{example}

\vspace{0,5cm}
To prove the existence of indecomposable representations for $(r-1)a + 1 \leq b < a (\frac{r+\sqrt{r^2-4}}{2})$, we consider the universal cover $C_r$ of the quiver $\Gamma_r$. We let $(C_r)^+$ be the set of all sources of $C_r$, $(C_r)^-$ be the set of all sinks and denote with $\rep(C_r)$ the category of finite dimensional representations of $C_r$. For the sake of simplicity we only recall the most important properties. For a more detailed description we refer to \cite{Gab3},\cite{Ri7} and \cite{Bi2}.\\ We fix a covering $\pi \colon C_r \to\Gamma_r$ of quivers, i.e. $\pi$ is a morphism of quivers and for each $x \in (C_r)_0$ the induced map $n_{C_r}(x) \to n_{\Gamma_r}(\pi(x))$ is bijective.

\noindent By \cite[3.2]{Gab2} there exists an exact functor $\pi_\lambda \colon \rep(C_r) \to \rep(\Gamma_r)$ such that $\pi_{\lambda}(M)_{1} = \bigoplus_{x \in (C_r)^+} M_x$, $\pi_{\lambda}(M)_{2} = \bigoplus_{y \in (C_r)^-} M_y$ and $\pi_\lambda(M)(\gamma_i) = \bigoplus_{\delta \in \pi^{-1}(\gamma_i)} M(\delta)$ for all $i \in \{1,\ldots,r\}$. Morphisms are defined in the obvious way. 

\begin{Theorem}\cite[3.6]{Gab3}, \cite[6.2,6.3]{Ri7}\label{TheoremRingelGabriel}
There exists a free group $G$ of rank $r-1$, that acts on $\rep(\Gamma_r)$ such that the following statements hold:
\begin{enumerate}[topsep=0em, itemsep= -0em, parsep = 0 em, label=$(\alph*)$]
\item $\pi_{\lambda}$ sends indecomposable representations in $\rep(C_r)$ to indecomposable representations in $\rep(\Gamma_r)$.
\item If $M \in \rep(C_r)$ is indecomposable, then $\pi_{\lambda}(M) \cong \pi_{\lambda}(N)$ if and only if $M^g \cong N$ for some $g \in G$.
\item The category $\rep(C_r)$ has almost split sequences, $\pi_{\lambda}$ sends almost split sequences to almost split sequences and $\pi_{\lambda}$ commutes with the Auslander-Reiten translates, i.e. 
\[ \tau_{\Gamma_r} \circ \pi_{\lambda} = \pi_{\lambda} \circ \tau_{C_r}.\]
\end{enumerate}
\end{Theorem}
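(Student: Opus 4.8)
The plan is to obtain all three parts from the standard machinery of Galois coverings of locally bounded $k$-categories, applied to $\pi \colon C_r \to \Gamma_r$. First I would record the group-theoretic input: the underlying graph of $\Gamma_r$ has two vertices and $r$ edges, so its first Betti number is $r-1$ and its fundamental group $G := \pi_1(\Gamma_r)$ is free of rank $r-1$. Since $C_r$ is a tree, $G$ acts on it as the group of deck transformations, this action is \emph{free} on vertices and arrows, the orbit space is $\Gamma_r$, and $\pi$ is a Galois covering with group $G$. The induced $G$-action on $\rep(C_r)$ transports to the action on $\rep(\Gamma_r)$ named in the statement (written $M \mapsto M^g$), and $\pi_\lambda$ is the associated push-down functor, left adjoint to the restriction functor $\pi_\bullet \colon \rep(\Gamma_r) \to \rep(C_r)$.

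The technical tool is the covering Hom-formula, which I would derive from Galois descent. Because the fibres of $\pi$ are exactly the $G$-orbits, one has $\pi_\bullet \pi_\lambda N \cong \bigoplus_{g \in G} N^g$; since every object of $\rep(C_r)$ has finite support and $G$ acts freely on the infinite tree $C_r$, only finitely many translates $N^g$ meet the support of $M$, so this is effectively a finite sum. Adjunction then gives, for $M,N \in \rep(C_r)$,
\[ \Hom_{\Gamma_r}(\pi_\lambda M, \pi_\lambda N) \cong \Hom_{C_r}(M, \pi_\bullet \pi_\lambda N) \cong \bigoplus_{g \in G} \Hom_{C_r}(M, N^g). \]

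With this formula, parts $(a)$ and $(b)$ become an endomorphism-ring computation. Taking $N = M$ exhibits $\End_{\Gamma_r}(\pi_\lambda M)$ as a finite-dimensional, $G$-graded algebra whose degree-$e$ component is $\End_{C_r}(M)$; when $M$ is indecomposable this is local with residue field $k$. Two geometric facts from freeness and finite support drive the argument: first, $M \not\cong M^g$ for $g \neq e$ (otherwise the finite support $S$ would satisfy $gS = S$, contradicting the free action), so every homogeneous component of nonzero degree consists of non-isomorphisms; second, such a component is nilpotent, since $g^n \neq e$ for all $n$ (torsion-freeness) forces the supports to drift apart in the tree. I would use these to show that the non-units of $\End_{\Gamma_r}(\pi_\lambda M)$ form an ideal, whence the ring is local and $\pi_\lambda M$ is indecomposable, giving $(a)$. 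For $(b)$, the implication $M^g \cong N \Rightarrow \pi_\lambda M \cong \pi_\lambda N$ is immediate from $\pi_\lambda(N^g) \cong \pi_\lambda N$, while conversely an isomorphism $\pi_\lambda M \cong \pi_\lambda N$ is an element of $\bigoplus_g \Hom_{C_r}(M, N^g)$ lying outside the radical, forcing some component $M \to N^g$ to be an isomorphism.

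For $(c)$ I would first note that $C_r$ is locally bounded, since $|n_{C_r}(x)| = r$ for every vertex, and hereditary, so $\rep(C_r)$ admits almost split sequences with $\tau_{C_r} = \DTr$. That $\pi_\lambda$ is exact, sends projectives to projectives, and commutes with minimal projective presentations (again by the covering property) yields the intertwining $\tau_{\Gamma_r} \circ \pi_\lambda = \pi_\lambda \circ \tau_{C_r}$; and a push-down of an almost split sequence remains almost split because $\pi_\lambda$ preserves irreducibility of maps and, by $(a)$–$(b)$, is essentially injective on indecomposables. I expect the main obstacle to be the nilpotence/radical step underpinning $(a)$: making precise that the off-diagonal part of $\End_{\Gamma_r}(\pi_\lambda M)$ lies in the radical requires exploiting the tree geometry and the freeness of the $G$-action to control how compositions displace the finite support, and it is exactly here that the full strength of the hypotheses is used.
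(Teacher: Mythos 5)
First, a point of comparison: the paper does not prove this theorem at all --- it is imported verbatim from the literature (Gabriel \cite[3.6]{Gab3} and Ringel \cite[6.2, 6.3]{Ri7}), so there is no in-paper proof to match against. Your plan is a faithful reconstruction of exactly the standard covering-theory arguments in those sources: $G = \pi_1(\Gamma_r)$ free of rank $r-1$ acting by deck transformations on the tree $C_r$, the push-down $\pi_\lambda$ left adjoint to restriction, the fibre formula $\pi_\bullet \pi_\lambda N \cong \bigoplus_{g \in G} N^g$, the resulting Hom-formula, the $G$-graded endomorphism-ring analysis for $(a)$ and $(b)$, and exactness plus preservation of minimal projective presentations for $(c)$. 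So in architecture you are doing what the cited references do.

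There is, however, one genuine soft spot, precisely at the step you yourself flag. Knowing that each homogeneous element of nonzero degree is nilpotent (because $\supp(M^{g^n})$ drifts off to infinity along the axis of the hyperbolic element $g$) does \emph{not} yield that $N := \rad\End(M) \oplus \bigoplus_{g \neq e} \Hom(M, M^g)$ is a nil ideal, and hence does not yet give locality of $\End(\pi_\lambda M)$: a sum of nilpotent homogeneous elements need not be nilpotent, and mixed products such as $f_{g^{-1}} \circ f_g$ return to degree $e$, so the support-displacement mechanism alone cannot control long compositions (degrees can oscillate between $g$, $g^{-1}$ and $e$ indefinitely). The standard repair has two ingredients: (i) any composite $M \to M^g \to M$ with $g \neq e$ is a non-isomorphism, since otherwise $M$ would be a direct summand of the indecomposable $M^g$, forcing $M \cong M^g$, which your freeness-plus-torsion-freeness argument excludes; hence $N$ is a two-sided ideal with $\End(\pi_\lambda M)/N \cong k$; and (ii) every length-$\ell$ product of homogeneous elements of $N$ is a sum of composites of $\ell$ non-isomorphisms between indecomposables all of dimension $\dim_k M$, so the Harada--Sai lemma (or an induction combining the rad-closure of (i) with the support drift for a fixed group element) gives $N^{2^{\dim_k M}} = 0$, whence $\End(\pi_\lambda M)$ is local. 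With that patch, your arguments for $(a)$, $(b)$ close as stated, and the sketch of $(c)$ --- exactness, preservation of projectives and of minimal presentations giving $\tau_{\Gamma_r} \circ \pi_\lambda = \pi_\lambda \circ \tau_{C_r}$, then the Hom-formula to verify the almost split property of the pushed-down sequence --- is the argument of the cited sources; note only that preservation of irreducible maps is usually deduced \emph{from} preservation of almost split sequences rather than used to prove it, so the verification should run through the adjunction directly.
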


\noindent The next result tells us that it is not hard to decide whether the push-down $\pi_\lambda(M)$ of a representation $M \in \rep(C_r)$ has the equal kernels property.

\begin{Theorem}\label{Theorem:INJEKP}\cite[4.1]{Bi2}
\label{PropositionGeneral}
Let $M \in \rep(C_r)$ be an indecomposable representation. The following statements are equivalent:
\begin{enumerate}[topsep=0em, itemsep= -0em, parsep = 0 em, label=$(\alph*)$]
\item $N := \pi_{\lambda}(M) \in \EKP$.
\item $N(\gamma_i)$ is injective for all $i \in \{1,\ldots,r\}$.
\item $M \in \Inj := \{ M \in \rep(C_r) \mid \forall \delta \in (C_r)_1: M(\delta) \ \text{is injective}\}$.
\end{enumerate}
\end{Theorem}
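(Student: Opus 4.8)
The plan is to unwind the definition of the push-down $\pi_\lambda$, to dispatch (a)$\Rightarrow$(b)$\Leftrightarrow$(c) by direct linear algebra, and then to prove the substantial implication (c)$\Rightarrow$(a) by a propagation argument that exploits the fact that the underlying graph of $C_r$ is a tree. The first thing I would record is the \emph{matching structure} of the covering. Since $\pi\colon C_r \to \Gamma_r$ restricts to a bijection on each neighbourhood, over a fixed arrow $\gamma_i$ every source of $C_r$ is the source of exactly one arrow in $\pi^{-1}(\gamma_i)$ and every sink is the target of exactly one such arrow. Hence, with respect to the decompositions $N_1 = \bigoplus_{x \in (C_r)^+} M_x$ and $N_2 = \bigoplus_{y \in (C_r)^-} M_y$, the map $N(\gamma_i) = \bigoplus_{\delta \in \pi^{-1}(\gamma_i)} M(\delta)$ is block-diagonal: it carries the summand $M_x$ into the single summand indexed by the target $t(\delta)$ of the arrow $\delta$ out of $x$ over $\gamma_i$, via the block $M(\delta)$.

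This observation drives the two easy equivalences. For (a)$\Rightarrow$(b) I would evaluate at the coordinate vector $\alpha = e_i$, giving $N^{e_i} = N(\gamma_i)$, which must be injective whenever $N \in \EKP$. For (b)$\Leftrightarrow$(c), the block-diagonal description shows that $N(\gamma_i)$ is injective if and only if each block $M(\delta)$ with $\delta \in \pi^{-1}(\gamma_i)$ is injective; letting $i$ range over $\{1,\dots,r\}$ exhausts all arrows of $C_r$, so all the $N(\gamma_i)$ are injective exactly when $M \in \Inj$. (Indecomposability of $M$ is not actually needed here, since both $\EKP$ and $\Inj$ are closed under direct summands, but it is available through Theorem \ref{TheoremRingelGabriel}.)

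The heart is (c)$\Rightarrow$(a), which I would prove by contradiction. Assume every $M(\delta)$ is injective but there exist $\alpha \neq 0$ and $0 \neq v = (v_x)_x \in \ker N^\alpha$. Set $S := \supp v = \{ x \in (C_r)^+ \mid v_x \neq 0 \}$, a nonempty finite set of sources. For each sink $y$, the vanishing of the $y$-component of $N^\alpha v$ reads
\[ \sum_{x} \alpha_{i(x,y)}\, M(\delta_{x,y})\, v_x = 0, \]
the sum running over the source-neighbours $x$ of $y$, where $\delta_{x,y}$ is the unique arrow $x \to y$ and $\gamma_{i(x,y)}$ the arrow it covers. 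The key \emph{propagation step} is then: if $x \in S$ and $\alpha_i \neq 0$, the arrow out of $x$ over $\gamma_i$ ends in a sink $y$ for which the term $\alpha_i M(\delta_{x,y}) v_x$ is nonzero (by injectivity and $v_x \neq 0$), so the relation at $y$ forces a second source-neighbour $x' \neq x$ of $y$ with $x' \in S$.

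Finally I would combine this with the tree structure. Let $T$ be the convex hull of $S$ in $C_r$, a finite subtree all of whose leaves lie in $S$ and are therefore sources. Choose a leaf $x^\ast$ of $T$. For every index $i$ with $\alpha_i \neq 0$, the propagation step produces a sink $y_i$ adjacent to $x^\ast$ and an $x' \in S$ adjacent to $y_i$; since the geodesic between $x^\ast$ and $x'$ passes through $y_i$, the edge $x^\ast y_i$ lies in $T$. But a leaf of $T$ has a unique neighbour in $T$, so all the $y_i$ coincide, while distinct indices give distinct neighbours of $x^\ast$; hence at most one coordinate of $\alpha$ is nonzero. Then $N^\alpha = \alpha_{i_0} N(\gamma_{i_0})$ for a single $i_0$, which is injective by (b), contradicting $0 \neq v \in \ker N^\alpha$. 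I expect the delicate point to be precisely this extremal argument — organising the convex hull and the bookkeeping of which arrows cover which $\gamma_i$ so that the uniqueness of the leaf's neighbour blocks the balancing term demanded by propagation — whereas the remaining implications are formal consequences of the block-diagonal shape of $\pi_\lambda$.
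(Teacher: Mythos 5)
Your proof is correct, and I checked the delicate points: the matching structure (each source of $C_r$ carries exactly one arrow over each $\gamma_i$, and dually for sinks) does make each $N(\gamma_i)$ block-diagonal with pairwise distinct source and target summands, so (a)$\Rightarrow$(b) via $\alpha = e_i$ and (b)$\Leftrightarrow$(c) are indeed formal; in (c)$\Rightarrow$(a) the propagation step is sound (the term $\alpha_i M(\delta_{x,y})v_x \neq 0$ forces a second nonzero term in the relation at $y$, so $|S|\geq 2$ automatically, which also disposes of the degenerate case where the convex hull would be a single vertex), the leaves of the convex hull $T$ of $S$ do lie in $S$, the geodesic $x^\ast\!-\!y_i\!-\!x'$ lies in $T$ by convexity (note $x^\ast$ and $x'$ are both sources, hence non-adjacent by bipartiteness, so this really is the geodesic), distinct indices give distinct sinks $y_i$ because the $r$-regular tree has no multiple edges, and the residual case of exactly one nonzero coordinate $\alpha_{i_0}$ is correctly closed off by injectivity of $N(\gamma_{i_0})$ from (b). Your remark that indecomposability of $M$ is never used is also accurate. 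Be aware, though, that the paper itself gives no proof of this statement --- it is imported wholesale from \cite[4.1]{Bi2} --- so your argument is not a variant of an in-text proof but a self-contained replacement for the citation. Judging from the machinery this paper does deploy around the covering (e.g.\ the identity $\bigoplus_{g \in G}\Hom_{C_r}((X^1)^g,T) \cong \Hom(X_{e_1},\pi_\lambda(T))$ from \cite[4.5]{Bi1} used in Proposition \ref{Result3}), the cited proof is organized around push-down adjunction formulas and the functorial characterization of $\EKP$ via the representations $X_\alpha$ of Theorem \ref{Theorem:Worch}, whereas yours works directly with kernels and the tree geometry of $C_r$. The covering-theoretic route generalizes more readily (it treats $\EKP$, $\EIP$ and constant-rank conditions uniformly through $\Hom$- and $\Ext$-vanishing), while your route is more elementary and makes visible exactly where the tree structure is used --- the extremal leaf argument blocking the balancing term is precisely the combinatorial content that the Hom-space formalism hides.
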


\noindent For the sake of book-keeping, recall that we assume $r \geq 3$, $q_{\Gamma_r}(a,b) \leq 0$, $b - a \geq r - 1$, $a \neq 0$ and that $a = 1$ implies $b = r$.  Since $q_{\Gamma_r}(1,r) = 1$, we have $a \geq 2$. Hence we get
\[ (r-1)a + 1 \leq (r-\frac{1}{r-1})a.\]
Moreover, $r \geq 3$ implies 
\[ r - \frac{1}{r-1} < \frac{r+\sqrt{r^2-4}}{2} .\]

\noindent We distinguish therefore the 
cases

\[ \boxed{(r-1)a+1 \leq b \leq (r-\frac{1}{r-1})a} \ \text{and} \  \boxed{(r-\frac{1}{r-1})a < b < (\frac{r+\sqrt{r^2-4}}{2})a.}\]

\bigskip

\subsubsection{The case $(r-1)a+1 \leq b \leq (r-\frac{1}{r-1})a$}

The aim of this section is to show the existence of an indecomposable representation $E_{(a,b)} \in \rep(C_r)$ such that $E_{(a,b)} \in \Inj$ and $\dimu \pi_{\lambda}(E_{(a,b)}) = (a,b)$.

\begin{Lemma}\label{Lemma:MaximalNumber}
Let $q := \lfloor \frac{a - 1}{r-1} \rfloor$ and $s \in \{0,\ldots,r-2\}$ such that $q(r-1)+s = a - 1$. Then $m := 
(r-1)a+ 1 + q(r-2)+s - 1  =\lfloor (r - \frac{1}{r-1})a \rfloor$. 
\end{Lemma}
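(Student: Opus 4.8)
The plan is to reduce both sides to a clean arithmetic identity and then verify that identity directly, since the statement is ultimately a bookkeeping fact about floors and ceilings. First I would simplify the definition of $m$ by eliminating $s$. Using the defining relation $s = a-1-q(r-1)$, the two stray $\pm 1$ terms cancel and the $q$-contributions combine, giving
\[ m = (r-1)a + q(r-2) + s = (r-1)a + a - 1 - q = ra - 1 - q. \]
This already puts $m$ in the form ``$ra$ minus a small correction'', which is exactly the shape one expects for the floor of $\bigl(r-\tfrac{1}{r-1}\bigr)a$.

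Next I would rewrite the right-hand side in the same shape. Since $ra$ is an integer, pulling the integer part out of the floor and using $\lfloor -x \rfloor = -\lceil x \rceil$ yields
\[ \left\lfloor \Bigl(r - \tfrac{1}{r-1}\Bigr)a \right\rfloor = \left\lfloor ra - \tfrac{a}{r-1} \right\rfloor = ra - \left\lceil \tfrac{a}{r-1} \right\rceil. \]
Comparing this with the expression just obtained for $m$, the entire lemma collapses to the single claim
\[ \left\lceil \tfrac{a}{r-1} \right\rceil = q + 1. \]

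To finish, I would feed the division relation back in. From $q(r-1)+s = a-1$ with $0 \le s \le r-2$ we get $a = q(r-1) + (s+1)$ with $1 \le s+1 \le r-1$, hence
\[ \tfrac{a}{r-1} = q + \tfrac{s+1}{r-1}, \qquad 0 < \tfrac{s+1}{r-1} \le 1. \]
Because the fractional contribution lies in $(0,1]$, its ceiling is exactly $1$, so $\lceil a/(r-1)\rceil = q+1$, as required. I do not expect a genuine obstacle here; the only point demanding care is the boundary case $s = r-2$ (equivalently $s+1 = r-1$), where $(s+1)/(r-1)$ sits exactly at the endpoint $1$ — one must record that $\lceil 1 \rceil = 1$ rather than $2$, which is precisely why the estimate is framed with a \emph{closed} upper bound $\le 1$ together with a \emph{strict} lower bound $> 0$. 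Everything else is forced verbatim by the defining relations.
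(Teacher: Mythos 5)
Your proof is correct and follows essentially the same route as the paper: both simplify $m$ to $ra-q-1$ via the relation $s=a-1-q(r-1)$ and then verify the floor by elementary integer-division arithmetic, the paper by checking $m+z\le\bigl(r-\tfrac{1}{r-1}\bigr)a$ exactly for $z=0$ and not $z=1$, you by the equivalent reformulation $\bigl\lfloor ra-\tfrac{a}{r-1}\bigr\rfloor = ra-\bigl\lceil \tfrac{a}{r-1}\bigr\rceil$ together with $\bigl\lceil \tfrac{a}{r-1}\bigr\rceil=q+1$. Your packaging through the ceiling function is a slightly cleaner way to organize the same computation, and your attention to the boundary case $s=r-2$ correctly handles the one point where the paper's two-sided check does real work.
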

\begin{proof}
We have for $z \in \{0,1\}$
\[ m + z = (r-1)a  +1 + q(r-1) -q +s  - 1  + z = ra - q - 1 + z = (r - \frac{q+1-z}{a})a,\]
and conclude 
\begin{align*}
m+z \leq (r-\frac{1}{r-1})a &\Leftrightarrow -\frac{q+1-z}{a} \leq - \frac{1}{r-1} \Leftrightarrow qr-q+r-1-zr+z\geq a \\
&  \Leftrightarrow q(r-1) -  (z-1)(r-1) \geq a \Leftrightarrow (q-z+1)(r-1) > a - 1 \\
& \Leftrightarrow z = 0.
\end{align*}
\end{proof}

\begin{proposition}\label{Result2}
Assume that $(r-1)a+1 \leq b \leq (r-\frac{1}{r-1})a$. There exists an indecomposable representation $E_{(a,b)} \in \rep(C_r)$ such that $E_{(a,b)} \in \Inj$ and $\dimu \pi_\lambda E_{(a,b)} = (a,b)$.
\end{proposition}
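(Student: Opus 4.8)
The plan is to construct $E_{(a,b)}$ explicitly as a brick supported on the source-regular subquiver $\mathcal{Q}(a) \subseteq C_r$ provided by Lemma~\ref{Lemma:SourceRegularQuiver}, and then to read off the assertion from Theorem~\ref{Theorem:INJEKP}. Recall that $\mathcal{Q}(a)$ has $a$ sources $x_1,\ldots,x_a$ and exactly $(r-1)a+1$ sinks; for a sink $l$ write $d_l$ for the number of arrows of $\mathcal{Q}(a)$ ending in $l$. By Lemma~\ref{Lemma:SourceRegularQuiver} and Corollary~\ref{Corollary:Maximal} the non-leaf sinks (those with $d_l \geq 2$) are precisely the $q=\lfloor\frac{a-1}{r-1}\rfloor$ sinks with $d_l=r$ together with at most one further ``special'' sink. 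First I would fix a dimension vector $\alpha \in \NN^{\mathcal{Q}(a)_0}$ with $\alpha_{x_i}=1$ at every source, $\alpha_l=1$ at every leaf, and $1 \leq \alpha_l \leq d_l-1$ at every non-leaf sink, chosen so that $\sum_{l}\alpha_l = b$. Such a choice exists: the all-ones vector gives sink-sum $(r-1)a+1$, while raising each non-leaf sink to its maximum $d_l-1$ gives, via the degree identity $\sum_l d_l = ra$ together with Corollary~\ref{Corollary:Maximal} and Lemma~\ref{Lemma:MaximalNumber}, a sink-sum equal to $\lfloor(r-\tfrac{1}{r-1})a\rfloor$ or one more. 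Since $b$ is an integer with $(r-1)a+1 \leq b \leq (r-\tfrac1{r-1})a$, so $b\leq\lfloor(r-\tfrac1{r-1})a\rfloor$, and the sink-sum can be raised in steps of one, the value $b$ is attained; hence $\dimu\pi_\lambda$ of any representation with dimension vector $\alpha$ equals $(a,b)$.

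Next I would turn $\alpha$ into a representation. Since every source has dimension $1$, an arrow $\delta\colon x_i \to l$ is sent to a vector $v_{i,l}\in E_l = k^{\alpha_l}$, and $E(\delta)$ is injective precisely when $v_{i,l}\neq 0$. At each non-leaf sink $l$ I would choose the $d_l$ vectors $\{v_{i,l}\}$ in general position, i.e.\ so that any $\alpha_l$ of them are linearly independent---possible since $k$ is infinite and $\alpha_l \leq d_l$---and at each leaf I take the unique vector nonzero. By construction $E_{(a,b)}(\delta)$ is injective for every arrow, so $E_{(a,b)} \in \Inj$, and extending by zero gives a representation of $C_r$ with $\dimu\pi_\lambda E_{(a,b)} = (a,b)$.

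It remains to prove $E_{(a,b)}$ indecomposable, which I would do by showing it is a brick. An endomorphism is a family $(\lambda_i)_i$ of scalars at the sources together with maps $\phi_l \in \End_k(E_l)$ satisfying $\phi_l(v_{i,l}) = \lambda_i v_{i,l}$ for every arrow $x_i \to l$. At a non-leaf sink the $d_l > \alpha_l$ vectors $v_{i,l}$ are eigenvectors of $\phi_l$ lying in general position in $k^{\alpha_l}$; a short linear-algebra argument (having strictly more eigenvectors in general position than the dimension forces a single eigenvalue, whence $\phi_l$ is scalar) shows that $\phi_l$ is a scalar and that all $\lambda_i$ with $x_i$ adjacent to $l$ coincide. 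Finally I would use connectivity: in the tree $\mathcal{Q}(a)$ any two sources are joined by a path whose intermediate sinks are each adjacent to two sources on the path, hence have degree $\geq 2$ and are non-leaf sinks; propagating the equalities of the $\lambda_i$ along such paths forces all $\lambda_i$ to agree, say $\lambda_i=\lambda$, and then every $\phi_l = \lambda\cdot\id$. Thus $\End(E_{(a,b)}) = k$, so $E_{(a,b)}$ is an indecomposable representation in $\Inj$, and Theorem~\ref{Theorem:INJEKP} gives $\pi_\lambda E_{(a,b)} \in \EKP$.

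The main obstacle is securing indecomposability and membership in $\Inj$ simultaneously: the root-existence statement of Lemma~\ref{Lemma:ImaginaryRoot} would hand over an indecomposable of the correct dimension vector but says nothing about injectivity of the structure maps. The device that resolves this is the general-position choice at the non-leaf sinks, which at once makes every structure map injective and, through the eigenvector argument together with the connectedness of the sources via non-leaf sinks, collapses the endomorphism ring to $k$. The one numerical point needing care is that the target sink-sum $b$ truly lies in the attainable interval, and this is exactly what Corollary~\ref{Corollary:Maximal} and Lemma~\ref{Lemma:MaximalNumber} are designed to furnish.
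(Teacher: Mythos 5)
Your proof is correct, but it takes a genuinely different route from the paper. The paper keeps your dimension vector $\alpha$ (the two constructions produce exactly the same admissible $\alpha$'s: $1$ at sources and leaves, between $1$ and $|n_a(l)|-1$ at the non-leaf sinks singled out by Lemma~\ref{Lemma:SourceRegularQuiver} and Corollary~\ref{Corollary:Maximal}, with sink-sum $b$ attainable by Lemma~\ref{Lemma:MaximalNumber}), but it obtains indecomposability abstractly: Lemma~\ref{Lemma:ImaginaryRoot} shows via explicit Weyl-group reflections that $\alpha \in \Delta_+(\cQ(a))$, Kac's Theorem~\ref{Theorem:Kac} then yields an indecomposable $E_\alpha$ with $\dimu E_\alpha = \alpha$, and injectivity of the structure maps is deduced afterwards from indecomposability together with the fact that every source of $\cQ(a)$ is one-dimensional with its full $r$-element neighbourhood in the support. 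You reverse this: you build the representation explicitly with general-position vectors at the sinks, which makes membership in $\Inj$ immediate, and then prove indecomposability by hand via the brick computation. Your eigenvector argument is sound --- the constraint $\alpha_l \leq d_l - 1$ gives $d_l \geq \alpha_l + 1$ eigenvectors in general position in $k^{\alpha_l}$, and a count of eigenspace dimensions forces a single eigenvalue and a scalar $\phi_l$; the alternating source--sink structure of paths in the tree $\cQ(a)$ then propagates the scalars, exactly as you say. What each approach buys: the paper's route is shorter at the proof site because Lemma~\ref{Lemma:ImaginaryRoot} was set up precisely for this purpose, and it delegates all existence questions to Kac's theorem; your route avoids Kac's theorem and root combinatorics entirely, is constructive, works over any infinite field rather than only algebraically closed ones, and yields the strictly stronger conclusion that $E_{(a,b)}$ is a brick (in the spirit of Chen's constructions used in Proposition~\ref{Result1}, whose bricks feed the wildness corollary). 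One small point worth making explicit in your write-up: in the case $s=0$ of Corollary~\ref{Corollary:Maximal} your maximal sink-sum $ra-q$ exceeds $\lfloor (r-\frac{1}{r-1})a \rfloor$ by one, which is harmless since the hypothesis only requires reaching $b \leq \lfloor (r-\frac{1}{r-1})a \rfloor$, and arrows of $C_r$ starting outside $\supp(E_{(a,b)})$ carry maps from zero spaces, which are injective, so $E_{(a,b)} \in \Inj$ indeed holds for all of $(C_r)_1$.
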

\begin{proof}
Let $\mathcal{Q}(a)$ be the quiver constructed in Lemma $\ref{Lemma:SourceRegularQuiver}$, $q \in \NN_0$ the number of sinks $y$ in $\cQ(a)_0$ with $|n_{C_r}(y) \cap \cQ(a)_0| = r$. In view of $\ref{Corollary:Maximal}$ we find $s \in \{0,\ldots,r-2\}$ such that $q(r-1) + s = a - 1$. Note that $s \neq 0$ if and only if there exists a $($uniquely determined$)$ sink in $y_0 \in \cQ(a)_0$ such that $1 < |n_{C_r}(y_0) \cap \cQ(a)_0| < r$ and $|n_{C_r}(y_0) \cap \cQ(a)_0| = s + 1$.
Let $y_1,\ldots,y_q$ be the sinks that satisfy $|n(y_i) \cap \cQ(a)_0| = r$. 
In view of the assumption and $\ref{Lemma:MaximalNumber}$ we have 
\[(r-1)a+1 \leq b \leq (r-1)a+1 + q(r-2)+s-1.\] Hence we have 
\[ 0 \leq  b - (r-1)a -1  \leq q(r-2)+s-1.\] 
\begin{enumerate}
\item If $s \neq 0$, we find for $i \in \{1,\ldots,q\}$ an element $\beta_i \in \{0,\ldots,r-2\}$ and $\beta_{q+1} \in \{0,\ldots,s-1\}$ such that 
$b - (r-1)a-1 = \sum^{q+1}_{i=1} \beta_i$.
We define $\alpha \in \NN^{\cQ(a)_0}$ by setting
\[ \alpha_l := \begin{cases}
 1, l \ \text{source} \\
1, l \ \text{sink and} \ l \notin \{y_1,\ldots,y_{q+1}\} \\
1 +\beta_l, l \in \{y_1,\ldots,y_{q+1}\}.
\end{cases}
\]

\item If $s = 0$, we find for $i \in \{1,\ldots,q\}$ an element $\beta_i \in \{0,\ldots,r-2\}$ such that 
\[b - (r-1)a-1 = \sum^{q}_{i=1} \beta_i.\]
We define $\alpha \in \NN^{\cQ(a)_0}$ by setting
\[ \alpha_l := \begin{cases}
1, l \ \text{source} \\
1, l \ \text{sink and} \ l \notin \{y_1,\ldots,y_q\} \\
1 + \beta_l, l \in \{y_1,\ldots,y_{q}\}.
\end{cases}
\]

\end{enumerate} 

By construction $\alpha$ satisfies $\supp(\alpha) = \cQ(a)_0$. For each source $l$ we have $\alpha_l = 1$ and for each sink $j$ we have $\alpha_j  \leq \max \{1,|n_{a}(j)|-1\}$. Hence we conclude with Lemma $\ref{Lemma:ImaginaryRoot}$ that $\alpha \in \Delta^+(\cQ(a))$ and Theorem $\ref{Theorem:Kac}$ implies that we find an indecomposable representation $E_\alpha \in \rep(\cQ(a)) \subseteq \rep(C_r)$ with dimension vector $\alpha$. The pushdown $\pi_{\lambda}(E_\alpha)$ satisfies
\[ \dimu \pi_{\lambda}(E_\alpha) = (a,\sum_{y \in \cQ(a)_0 \cap C^-_r} \alpha_y) = (a,b). \]
By Theorem $\ref{TheoremRingelGabriel}$ the representation is indecomposable in $\rep(\Gamma_r)$. Moreover we have for each source in $x \in (C_r)_0$ that either $(E_\alpha)_x = 0$ or $\dim_k (E_\alpha)_x = k$ and $|\supp(E_\alpha) \cap n_{C_r}(x)| = r$. Since $E_\alpha$ is indecomposable we conclude that every map $E_\alpha(\delta)$ is injective for each arrow $\delta \in (C_r)_1$. Therefore $E_\alpha \in \Inj$ and $\pi_{\lambda}(E_\alpha) \in \EKP$ by Theorem $\ref{Theorem:INJEKP}$.
\end{proof}

\subsubsection{The case $(r-\frac{1}{r-1})a < b < (\frac{r+\sqrt{r^2-4}}{2})a$}

Let us deal with the last remaining case. 

\begin{Lemma}[compare {\cite[1.1]{Ri6}}]\label{Lemma:ExistenceThinInjectiveArrow}
Let $(u,v) \in \NN^2$ such that $u \leq v \leq (r-1)u + 1$. There exists an indecomposable and thin representation $T_{(u,v)} \in \rep(C_r)$ such that $T_{(u,v)}(\gamma)$ is injective for each $\gamma \in (C_r)_1$ with $\pi(\gamma) = \gamma_1$ 
and $\dimu \pi_\lambda(T_{(u,v)}) = (u,v)$.
\end{Lemma}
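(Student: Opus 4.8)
The plan is to realize $T_{(u,v)}$ as a \emph{thin} representation (all vertex dimensions $\leq 1$) supported on a carefully chosen connected subquiver $Q \subseteq C_r$, with every structure map along an edge of $Q$ equal to $\id_k$. First I would record the standard fact that such a representation is indecomposable: an endomorphism is a tuple of scalars $(f_x)_{x}$ with $f_x \in \End_k((T_{(u,v)})_x)$, and commutativity along each edge of $Q$ (where the map is an isomorphism) forces $f_x = f_y$ for adjacent $x,y$; since $Q$ is connected, all scalars coincide, so $\End(T_{(u,v)}) = k$ is local. Because $T_{(u,v)}$ is thin, $\pi_\lambda(T_{(u,v)})_1 = \bigoplus_{x \in (C_r)^+} (T_{(u,v)})_x$ has dimension equal to the number of sources in $Q$, and likewise for the sinks; hence I need $Q$ to contain exactly $u$ sources and $v$ sinks. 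Finally, the requirement that $T_{(u,v)}(\gamma)$ be injective for every $\gamma$ over $\gamma_1$ translates, for a thin representation, into a purely combinatorial condition on $Q$: for a source $x \notin Q_0$ the map is $0 \to (T_{(u,v)})_{t(\gamma)}$, automatically injective, while for a source $x \in Q_0$ the unique arrow $\gamma$ over $\gamma_1$ out of $x$ must have its target (a sink) again in $Q_0$, so that $T_{(u,v)}(\gamma) = \id_k$. Thus it suffices to build a connected subquiver $Q$ with $u$ sources, $v$ sinks, containing for each of its sources the $\gamma_1$-successor sink.

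Before constructing $Q$, I would observe that the hypotheses $u \leq v \leq (r-1)u+1$ are exactly the feasibility constraints. Since distinct sources have distinct $\gamma_1$-successors (each source has a single out-arrow over $\gamma_1$, and $\pi$ restricts to a bijection on neighbourhoods), the $u$ mandatory $\gamma_1$-successor sinks are pairwise distinct and lie in $Q$, forcing $v \geq u$. On the other hand $C_r$ is an $r$-regular tree with bipartite orientation, so the connected subquiver $Q$ is itself a tree with $u+v-1$ edges; as every edge joins a source to a sink, the sum of the source-degrees in $Q$ equals $u+v-1$, and each such degree is at most $r$, giving $u+v-1 \leq ru$, i.e. $v \leq (r-1)u+1$. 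This makes the stated range natural and tells me where the bound $b \leq (r-1)u+1$ will be consumed.

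I would then construct $Q$ by induction on $u$. For $u=1$ (so $1 \leq v \leq r$) take the star at a single source $x_1$, including its $\gamma_1$-successor and $v-1$ of the remaining $r-1$ neighbours. For the step, given $u \geq 2$ and $u \leq v \leq (r-1)u+1$, I would choose an auxiliary value $v'$ in the window $[\max(u,\,v-r+1),\,\min((r-1)(u-1)+1,\,v)]$, which is nonempty precisely because $u \leq (r-1)(u-1)+1$ and $v \leq (r-1)u+1$; by induction there is a tree $Q'$ with $u-1$ sources and $v'$ sinks satisfying the $\gamma_1$-condition. Since $v' \geq u > u-1$, pigeonhole on the distinct $\gamma_1$-predecessors of the $v'$ sinks yields a sink $y \in Q'_0$ whose $\gamma_1$-predecessor $x_u$ lies outside $Q'_0$; I attach $x_u$ to $y$ along this $\gamma_1$-arrow and add $t := v - v' \in \{0,\dots,r-1\}$ of the other $r-1$ neighbours of $x_u$ as new sinks (all fresh, as a tree contains no cycle through $x_u$). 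The result is a connected tree with $u$ sources and $v$ sinks in which every source still has its $\gamma_1$-successor present. The thin representation on $Q$ with identity maps is then the desired $T_{(u,v)}$, and $\pi_\lambda$ has dimension vector $(u,v)$.

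The routine part is the indecomposability and the push-down dimension count; the genuine obstacle is the combinatorial construction of $Q$, specifically verifying that the parameter window for $v'$ is nonempty while simultaneously guaranteeing the existence of an attachment sink with an external $\gamma_1$-predecessor. Choosing $v' \geq u$ is what reconciles these two demands, and checking that this choice remains compatible with the inductive range and with $t \leq r-1$ is where the bounds $u \leq v \leq (r-1)u+1$ are used in full.
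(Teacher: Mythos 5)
Your proof is correct, and while it shares the representation-theoretic skeleton with the paper --- realize $T_{(u,v)}$ as a thin representation with identity maps on a connected subtree $Q \subseteq C_r$ containing, for each of its sources, the $\gamma_1$-successor sink, after which indecomposability (scalar endomorphisms propagate along the connected support), injectivity over $\gamma_1$ (identity map if the source is in $Q_0$, zero domain otherwise), and the push-down dimension count are the same routine checks --- the combinatorial construction of $Q$ is genuinely different. The paper builds $Q$ in one shot: it takes a zigzag path on sources $x_1,\ldots,x_u$ and sinks $y_1,\ldots,y_u$ with $\pi(\alpha_i) = \gamma_1$ and $\pi(\beta_i) = \gamma_2$, computes $\bigl|\bigcup_{i=1}^{u} n_{C_r}(x_i)\bigr| = (r-1)u+1$, and selects $v$ sinks from this union containing all the $y_i$; the upper bound $v \leq (r-1)u+1$ is consumed in that single counting step. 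You instead induct on $u$, choosing $v'$ in the window $[\max(u,\,v-r+1),\,\min((r-1)(u-1)+1,\,v)]$ and grafting a fresh source at a sink of $Q'$ whose $\gamma_1$-predecessor lies outside $Q'_0$; your pigeonhole argument for the existence of such a sink is sound, since the $\gamma_1$-successor assignment is a bijection between sources and sinks of $C_r$ (each sink has exactly one incoming arrow over $\gamma_1$, by the covering property of $\pi$), so $v' \geq u > u-1$ forces an external predecessor, and your freshness claim for the $t = v - v' \leq r-1$ added sinks is correct because $C_r$ is a tree. What each approach buys: the paper's construction is explicit and makes the extremal value $(r-1)u+1$ visible as exactly the number of sinks adjacent to the zigzag, whereas your induction is more flexible (it produces many non-isomorphic supports and mirrors the grafting style used in Lemma \ref{Lemma:SourceRegularQuiver}) and, as a bonus, your preliminary paragraph shows the hypotheses $u \leq v \leq (r-1)u+1$ are also \emph{necessary} for such a thin representation to exist, a converse the paper does not record.
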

\begin{proof} Recall that a representation $M \in \rep(C_r)$ is called thin if $\dim_k M_z \leq 1$ for all $z \in (C_r)_0$.
We consider an unoriented path in $C_r$ that is of the following form
\[ \xymatrix{
& y_1 &  & y_2 && y_3  & \cdots & y_{u-1} && y_u \\
x_1 \ar^{\alpha_1}[ur]&  & x_2 \ar^{\alpha_2}[ur] \ar^{\beta_1}[ul]  & & x_3 \ar^{\alpha_3}[ur] \ar^{\beta_2}[ul]  & \cdots & x_{u-1} \ar^{\alpha_{u-1}}[ur] && x_u, \ar^{\alpha_u}[ur] \ar^{\beta_{u-1}}[ul]
}
\]
such that $\pi(\alpha_i) = \gamma_1$ and $\pi(\beta_i) = \gamma_2$ for all $i$.
Note that 
\[ | \bigcup^{u}_{i=1} n_{C_r}(x_i)| = r-1 + u + (u-1)(r-2) = (r-1)u+1 \geq v \geq u.\]
Hence we find $\{w_1,\ldots,w_v\} \subseteq \bigcup^u_{i=1} n_{C_r}(x_i)$ such that 
$|\{w_1,\ldots,w_v\}| = v$ and $\{y_1,\ldots,y_u\} \subseteq \{w_1,\ldots,w_v\}$. We define a thin representation $T_{(u,v)}$ with ${T_{(u,v)}}_z = k$ if and only if $z \in \{x_1,\ldots,x_u\} \cup \{w_1,\ldots,w_v\}$ and $T_{(u,v)}(x_i \to z) = \id_k$ for all $i \in \{1,\ldots,u\}$ and all $z \in \{w_1,\ldots,w_v\} \cap n_{C_r}(x_i)$.
By construction $T_{(u,v)}$ is a thin and indecomposable. Moreover we have $\{0\} = \ker T_{(u,v)}(\delta)$ for all $\delta \colon x \to y \in (C_r)_1$ such that $\pi(\delta) = \gamma_1$ since $\pi(\delta) = \gamma_1$ implies $\delta = \alpha_i$ for some $i \in \{1,\ldots,u\}$ or $x \not \in \supp(T_{(u,v)})$. By construction we have $\dimu \pi_{\lambda}(T_{(u,v)}) = (u,v)$.
\end{proof}

\noindent We denote by $\Phi := \begin{pmatrix}
r^2 -1  & -r \\
r & -1 
\end{pmatrix}$ the \textsf{Coxeter matrix} of $\Gamma_r$. Recall that $\dimu \tau_{\Gamma_r} M = \Phi \dimu M$ for each regular indecomposable representation $M$.

\begin{Lemma}\label{Lemma:ShiftRepresentation}
Let $(u,v) \in \NN^2$. If $(r-\frac{1}{r-1})u <  v < (\frac{r+\sqrt{r^2-4}}{2})u$, then we find $l \in \NN$ such that $(u_l,v_l)^t := \Phi^l(u,v)^t$ satisfies $u_l < v_l \leq (r-\frac{1}{r-1})u_l$.
\end{Lemma}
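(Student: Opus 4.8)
The plan is to translate the statement about the vectors $\Phi^l(u,v)^t$ into a statement about the single real quantity $y_l := v_l/u_l$, the slope of the dimension vector, and to analyze the induced dynamical system. First I would record the eigendata of the Coxeter matrix: its characteristic polynomial is $\lambda^2 - (r^2-2)\lambda + 1$, so $\Phi$ has two positive eigenvalues $\lambda_\pm$ with $\lambda_+\lambda_- = 1$ and $\lambda_+ > 1 > \lambda_- > 0$, and a short computation identifies the corresponding eigenlines as those of slope $y_\pm := \frac{r\pm\sqrt{r^2-4}}{2}$ (the two roots of $y^2 - ry + 1 = 0$, i.e.\ of $q_{\Gamma_r}(1,y)=0$). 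Because both eigenvalues are positive and both eigenrays lie in the open first quadrant, the open cone $\{(u',v') : u' > 0,\ y_- < v'/u' < y_+\}$ is $\Phi$-invariant; since the hypothesis $(r-\tfrac{1}{r-1})u < v < y_+ u$ places $(u,v)$ inside this cone (note $y_- < 1 < r - \tfrac{1}{r-1}$ for $r\ge 3$), every iterate $(u_l,v_l)$ stays in the cone. In particular $u_l > 0$, so the slopes $y_l$ are well defined and lie in $(y_-,y_+)$, and the desired conclusion $u_l < v_l \le (r-\tfrac{1}{r-1})u_l$ is exactly $1 < y_l \le r - \tfrac{1}{r-1}$.

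Next I would read off the induced action on slopes. From $u_{l+1} = (r^2-1)u_l - r v_l$ and $v_{l+1} = r u_l - v_l$ one gets $y_{l+1} = f(y_l)$ with $f(y) = \frac{r-y}{r^2-1-ry}$. This is a Möbius transformation fixing exactly $y_-$ and $y_+$, with a pole at $y = r - \tfrac{1}{r}$, which lies strictly above $y_+$; its derivative is $f'(y) = (r^2-1-ry)^{-2} > 0$, so $f$ is strictly increasing on the whole interval $(y_-,y_+)$ on which the orbit lives. Evaluating at the fixed points shows $y_+$ is repelling and $y_-$ attracting, and comparing $f(y)$ with $y$ shows $f(y) < y$ throughout $(y_-,y_+)$. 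Hence the orbit $y_0 > y_1 > y_2 > \cdots$ is strictly decreasing; being bounded below by $y_-$ it converges to a fixed point, necessarily $y_-$, so it eventually drops below $r - \tfrac{1}{r-1}$.

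The crux is the single exact computation
\[
f\!\left(r - \tfrac{1}{r-1}\right) = \frac{1/(r-1)}{1/(r-1)} = 1,
\]
which pins down the $f$-preimage of the lower threshold $1$ as precisely the upper threshold $r - \tfrac{1}{r-1}$. With this in hand I would let $l$ be the smallest index with $y_l \le r - \tfrac{1}{r-1}$; such an $l$ exists by the convergence above and satisfies $l \ge 1$ because $y_0 > r - \tfrac{1}{r-1}$ by hypothesis. Then $y_{l-1} > r - \tfrac{1}{r-1}$, and since $f$ is strictly increasing we obtain $y_l = f(y_{l-1}) > f(r - \tfrac{1}{r-1}) = 1$. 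Combining with $y_l \le r - \tfrac{1}{r-1}$ gives $1 < y_l \le r - \tfrac{1}{r-1}$, i.e.\ $u_l < v_l \le (r-\tfrac{1}{r-1})u_l$, as required.

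The step I expect to cause the most trouble is not any one calculation but making the ``no overshoot'' argument airtight: a priori the decreasing orbit could jump over the target band $(1,\, r-\tfrac{1}{r-1}]$ in a single application of $\Phi$, and ruling this out is exactly what the identity $f(r-\tfrac{1}{r-1}) = 1$ together with the monotonicity of $f$ accomplishes. Everything else — the eigenvalue computation, the cone invariance guaranteeing $u_l > 0$, and the convergence of the slopes — is routine once the problem has been recast in terms of $f$.
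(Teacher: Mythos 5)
Your proposal is correct, and at its core it is the same proof as the paper's: iterate $\Phi$, track the ratio $v_l/u_l$, and rule out overshooting the band $\bigl(1,\, r-\frac{1}{r-1}\bigr]$ in a single step. The paper's version of the no-overshoot step is the direct coordinate computation $(u_1,v_1)^t = (r^2u-rv-u,\ ru-v)^t$, so that $(r-\frac{1}{r-1})u < v$ gives $(r-1)(ru-v) < u$ and hence $u_1 - v_1 = (r-1)(ru-v) - u < 0$; your identity $f\bigl(r-\frac{1}{r-1}\bigr) = 1$ combined with strict monotonicity of the M\"obius map $f(y) = \frac{r-y}{r^2-1-ry}$ is exactly this inequality in dynamical clothing (and all your eigendata check out: the pole $r-\frac{1}{r}$ indeed lies above $y_+$, $f'(y) = (r^2-1-ry)^{-2} > 0$ there, and $f(y) < y$ on $(y_-,y_+)$). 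The genuine divergence is in the two auxiliary ingredients, where your route is more self-contained. For the persistent upper bound $v_l < \frac{r+\sqrt{r^2-4}}{2}\,u_l$ the paper argues representation-theoretically: $q_{\Gamma_r}(u,v) \leq 0$ makes $(u,v)$ the dimension vector of a regular indecomposable by Example \ref{Examples}, this property is preserved under $\Phi$, and imaginary roots lie in the stated cone; you instead verify $\Phi$-invariance of the open cone between the eigenrays of slopes $y_\pm$ by linear algebra, which also hands you $u_l > 0$ explicitly, a point left implicit in the root-theoretic argument. For termination the paper simply cites \cite[3.1.2]{Wor1} for the existence of $m$ with $u_m \geq v_m$, whereas you derive it internally from $f(y) < y$ on $(y_-,y_+)$ and monotone convergence of the slope orbit to the attracting fixed point $y_- < 1$. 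So your argument buys independence from both Kac-theoretic input and the cited result of \cite{Wor1}, at the price of the fixed-point analysis; the paper's proof is shorter given the machinery it already has in play.
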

\begin{proof} At first note that $q_{\Gamma_r}(u,v) \leq 0$ by Example $\ref{Examples}$. Hence $(u,v)$ is the dimension vector of an indecomposable regular representation and therefore $(u_l,v_l)$, as well. In particular, $v_l < (\frac{r+\sqrt{r^2-4}}{2})u_l$ for all $l \in \NN$. \\
We have $(u_1,v_1)^t = \begin{pmatrix}
r^2-1 & -r\\
r & -1
\end{pmatrix} \begin{pmatrix}
u \\
v
\end{pmatrix} = (r^2u -rv-u,ru-v)^t$.
Since  $ (r-\frac{1}{r-1})u <  v$, we have $(r-1)(ru-v) < u$ and conclude
$u_1-v_1 = r(ru-v)-u - (ru-v) = (r-1)(ru-v) -u < 0$, i.e. $u_1 < v_1$. It follows
$u_1 < v_1 < (\frac{r+\sqrt{r^2-4}}{2})u_1$. If $v_1 \leq (r-\frac{1}{r-1})u_1$, then we are done. Otherwise we have $(r-\frac{1}{r-1}) u_1 < v_1 < (\frac{r+\sqrt{r^2-4}}{2})u_1$ and continue the argument with $(u_1,v_1)$. Since there is $m \in \NN$ such that $(u_m,v_m)^t := \Phi^m (u,v)^t$ satisfies $u_m \geq v_m$ $($see for example \cite[3.1.2]{Wor1}$)$, we conclude that there is $l \in \{1,\ldots,m\}$ such that $u_l < v_l \leq (r - \frac{1}{r-1})u_l$.
\end{proof}

\begin{proposition}\label{Result3}
Let $(a,b) \in \NN^2$ such that $(r - \frac{1}{r-1})a < b < a(\frac{r+\sqrt{r^2-4}}{2})$. There exists an indecomposable representation $F_{(a,b)} \in \rep(C_r)$ such that $F_{(a,b)} \in \Inj$ and $\dimu \pi_\lambda (F_{(a,b)}) = (a,b)$.
\end{proposition}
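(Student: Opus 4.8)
The plan is to reduce, via the Coxeter transformation, to a dimension vector small enough to be realised by an explicitly constructed representation on a finite subquiver of $C_r$, and then to transport this representation back up to $(a,b)$ by applying a suitable power of the Auslander--Reiten translate $\tau^{-1}_{C_r}$. First I would invoke Lemma $\ref{Lemma:ShiftRepresentation}$: since $(r-\frac{1}{r-1})a < b < (\frac{r+\sqrt{r^2-4}}{2})a$, there is $l \in \NN$ (necessarily $l \geq 1$, as $(a,b)$ itself violates $b \leq (r-\frac{1}{r-1})a$) with $(a_l,b_l)^t := \Phi^l(a,b)^t$ satisfying $a_l < b_l \leq (r-\frac{1}{r-1})a_l$. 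By Example $\ref{Examples}$ the Coxeter transformation preserves $q_{\Gamma_r}$, so $(a_l,b_l)$, like $(a,b)$, satisfies $q_{\Gamma_r}(a_l,b_l) \leq 0$ and is an imaginary root.

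Next I would construct a seed $S \in \rep(C_r)$, indecomposable with regular pushdown, $\dimu \pi_\lambda S = (a_l,b_l)$, and with $S(\delta)$ injective for every $\delta$ with $\pi(\delta)=\gamma_1$. I split according to $b_l$: if $b_l \leq (r-1)a_l+1$, then Lemma $\ref{Lemma:ExistenceThinInjectiveArrow}$ supplies the thin representation $S := T_{(a_l,b_l)}$, which has exactly the required $\gamma_1$-injectivity. If instead $b_l > (r-1)a_l+1$, a short computation gives $b_l-a_l > (r-2)a_l+1 \geq r-1$, so $(a_l,b_l)\in\cD$ and $(r-1)a_l+1 \leq b_l \leq (r-\frac{1}{r-1})a_l$; then Proposition $\ref{Result2}$ supplies $S := E_{(a_l,b_l)} \in \Inj$, which is in particular $\gamma_1$-injective. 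These two ranges are complementary and exhaustive, so in both cases $S$ is a $\gamma_1$-injective indecomposable of $\rep(C_r)$ with $q_{\Gamma_r}(\dimu\pi_\lambda S)\le 0$, hence with regular pushdown by Example $\ref{Examples}$.

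I then set $F_{(a,b)} := \tau^{-l}_{C_r} S$. Since $\pi_\lambda S$ is regular, $S$ is regular (in particular non-projective), so $\tau^{-l}_{C_r}S$ is again a well-defined indecomposable of $\rep(C_r)$. Using that $\pi_\lambda$ commutes with the Auslander--Reiten translates (Theorem $\ref{TheoremRingelGabriel}(c)$) and that $\dimu \tau^{-1}_{\Gamma_r}M = \Phi^{-1}\dimu M$ for regular $M$, I obtain $\dimu \pi_\lambda F_{(a,b)} = \Phi^{-l}(a_l,b_l) = (a,b)$, as required.

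The crux --- and the step I expect to be the main obstacle --- is to show $F_{(a,b)} = \tau^{-l}_{C_r}S \in \Inj$. I would isolate this as a single lemma: \emph{if $M\in\rep(C_r)$ is indecomposable with regular pushdown and $M(\delta)$ injective for all $\delta$ with $\pi(\delta)=\gamma_1$, then $\tau^{-1}_{C_r}M \in \Inj$.} Granting it, the conclusion follows by iteration: applied to $S$ it gives $\tau^{-1}_{C_r}S \in \Inj$, and since $\Inj$-representations are in particular $\gamma_1$-injective, the lemma reapplies, so after $l$ steps $\tau^{-l}_{C_r}S \in \Inj$. To prove the lemma I would pass to $\Gamma_r$: by Theorem $\ref{Theorem:INJEKP}$ together with the commutation of $\pi_\lambda$ with $\tau^{-1}$, it is equivalent to show that a regular indecomposable $N\in\rep(\Gamma_r)$ with $N(\gamma_1)$ injective satisfies $\tau^{-1}_{\Gamma_r}N \in \EKP$. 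By the functorial characterisation (Theorem $\ref{Theorem:Worch}(a)$) this means $\Hom(X_\alpha,\tau^{-1}_{\Gamma_r}N)=0$ for all $\alpha$; since the regular modules form an abelian subcategory on which $\tau$ is an autoequivalence, and $X_\alpha$ is regular for $r\geq 3$ (as $q_{\Gamma_r}(1,r-1)=2-r<0$), functoriality of $\tau$ gives $\Hom(X_\alpha,\tau^{-1}_{\Gamma_r}N)\cong\Hom(\tau_{\Gamma_r}X_\alpha, N)$ with $\dimu \tau_{\Gamma_r}X_\alpha = \Phi(1,r-1)^t=(r-1,1)$. Thus everything reduces to showing that injectivity of $N(\gamma_1)$ forces every homomorphism into $N$ out of the relevant regular module of dimension vector $(r-1,1)$ to vanish; I expect this to be the delicate point, needing the explicit description of the $X_\alpha$ and a kernel argument for maps out of $N_1$. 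The remaining inequalities (the estimate $b_l-a_l>(r-2)a_l+1$ and the comparison of $(r-\frac{1}{r-1})a_l$ with $(r-1)a_l+1$) are routine.
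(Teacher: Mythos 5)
Your scaffolding coincides with the paper's proof: the shift via Lemma~$\ref{Lemma:ShiftRepresentation}$, the two-case construction of the seed (Proposition~$\ref{Result2}$ for $(r-1)a_l+1 \leq b_l \leq (r-\frac{1}{r-1})a_l$, the thin representation of Lemma~$\ref{Lemma:ExistenceThinInjectiveArrow}$ for $b_l \leq (r-1)a_l+1$), the transport by $\tau^{-l}_{C_r}$ using Theorem~$\ref{TheoremRingelGabriel}(c)$, and even the reduction of the crux to the vanishing of $\Hom(\tau_{\Gamma_r}X_\alpha, S)$ with $\dimu \tau_{\Gamma_r}X_\alpha = (r-1,1)$ are all exactly as in the paper. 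But the one step where the actual content lies is precisely the step you leave as an expectation: you never prove that $\gamma_1$-injectivity of the seed forces $\Hom(\tau_{\Gamma_r}X_\alpha, S) = 0$ for \emph{all} $\alpha \in k^r \setminus \{0\}$, and your proposed route (``the explicit description of the $X_\alpha$ and a kernel argument for maps out of $N_1$'') does not engage the real obstruction. The difficulty is that a hypothetical nonzero $f \colon \tau_{\Gamma_r}X_\alpha \to S$ must be played off against the single vanishing condition $\Hom(X_{e_1},S)=0$, and the naive move --- compose a nonzero $h \colon X_{e_1} \to \tau_{\Gamma_r}X_\alpha$ with $f$ --- fails on its face, since a composite of two nonzero morphisms can be zero. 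The paper closes this in three steps: (i) covering theory, via $\bigoplus_{g \in G}\Hom_{C_r}((X^1)^g,T_{(u,v)}) \cong \Hom(X_{e_1},\pi_\lambda(T_{(u,v)}))$ $($\cite[4.5]{Bi1}$)$, converts $\gamma_1$-injectivity upstairs into $\Hom(X_{e_1},S)=0$ downstairs; (ii) the Euler-form computation $\langle (1,r-1),(r-1,1)\rangle_{\Gamma_r} = 2(r-1)-r = r-2 > 0$ forces $\Hom(X_{e_1},\tau_{\Gamma_r}X_\alpha) \neq 0$; (iii) crucially, the fact that $\tau_{\Gamma_r}X_\alpha$ is \emph{elementary}, together with \cite[2.1.1, 2.1.4]{Bi1}, upgrades the existence of the two nonzero Hom-spaces to $\Hom(X_{e_1},S) \neq 0$, the desired contradiction. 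Step (iii) is a genuine structural input about elementary representations that your sketch has no substitute for; without it the lemma you isolate is an unproven assertion, so the proof has a real gap at its core.

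A secondary, less serious point: your induction scheme is heavier than necessary. For the seed coming from Proposition~$\ref{Result2}$, and likewise for the remaining $l-1$ translates in the thin case, the paper does not re-run any Hom-vanishing argument; it simply invokes the closure of $\Inj$ under $\tau^{-1}_{C_r}$ $($\cite[4.3]{Bi1}$)$, so the delicate EKP argument is needed only once, at the first translate of the thin seed, after which Theorem~$\ref{Theorem:INJEKP}$ puts $\tau^{-1}_{C_r}T_{(u,v)}$ in $\Inj$ and the closure property finishes. Your iterated lemma, if proved, would subsume this, but proving it in the generality you state (arbitrary regular indecomposable $N$ with $N(\gamma_1)$ injective) still requires exactly the elementary-module machinery of (iii) above, so nothing is saved.
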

\begin{proof}
Lemma $\ref{Lemma:ShiftRepresentation}$ provides $l \in \NN$ such that $(u,v)^t := \Phi^l(a,b)^t$ satisfies $u < v \leq (r-\frac{1}{r-1})u$. 
We distinguish the following cases:

\begin{enumerate}
\item If $(r-1)u+1 \leq v \leq (r-\frac{1}{r-1})u$, then Lemma $\ref{Result2}$ yields an indecomposable representation $F_{(u,v)} \in \Inj$ such that $\dimu \pi_{\lambda}(F_{(u,v)}) = (u,v)$. Since $\Inj$ is closed under $\tau^{-1}_{C_r}$ $($see \cite[4.3]{Bi1}$)$, we conclude that $F_{(a,b)} := \tau^{-l}_{C_r} F_{(u,v)} \in \Inj$ is indecomposable with $\dimu \pi_\lambda(F_{(a,b)})^t = \Phi^{-l} (u,v)^t = (a,b)^t$.
\item  If $u < v \leq (r-1)u+1$, then $\ref{Lemma:ExistenceThinInjectiveArrow}$ yields the existence of an indecomposable and thin representation $T_{(u,v)} \in \rep(C_r)$ such that $T_{(u,v)}(\gamma)$ is injective for each $\gamma \in (C_r)_1$ with $\pi(\gamma) = \gamma_1$ 
and $\dimu \pi_\lambda (T_{(u,v)}) = (u,v)$. 
In view of \cite[4.5]{Bi1} we have 
\[0 = \bigoplus_{g \in G} \Hom_{C_r}((X^1)^g,T_{(u,v)}) \cong \Hom(X_{e_1},\pi_\lambda(T_{(u,v)})),\]
where $e_1 \in k^r \setminus \{0\}$ is the first canonical basis vector.
Let $S :=  \pi_\lambda(T_{(u,v)})$ and $\alpha \in k^r \setminus \{0\}$. The assumption 
\[\Hom(\tau_{\Gamma_r} X_\alpha,S) \cong \Hom(X_\alpha,\tau^{-1}_{\Gamma_r} S) \neq 0\]
 yields a non-zero morphism $f \colon \tau_{\Gamma_r} X_\alpha \to S$. By the Euler-Ringel form we have 
 \begin{align*}
 0 &<  r - 2 = 2(r-1)-r = \langle  (1,r-1),(r-1,1) \rangle_{\Gamma_r} \\
 &=  \dim_k \Hom(X_{e_1},\tau_{\Gamma_r} X_\alpha) - \dim_k \Ext^1(X_{e_1},\tau_{\Gamma_r} X_\alpha),
\end{align*}
and conclude $0 \neq \Hom(X_{e_1},\tau_{\Gamma_r} X_\alpha)$. Now \cite[2.1.1, 2.1.4]{Bi1} yield $0 \neq \Hom(X_{e_1},S)$ since $\tau_{\Gamma_r} X_\alpha$ is elementary, a contradiction. We conclude with Theorem $\ref{Theorem:INJEKP}$ that $\tau^{-1}_{\Gamma_r} S \in \EKP$. Since $\tau_{\Gamma_r} \circ \pi_\lambda = \pi_\lambda \circ \tau_{C_r}$, the representation $F_{(a,b)} := \tau^{-l}_{C_r} T_{(u,v)}$ is in $\Inj$  with $\dimu \pi_\lambda (F_{(a,b)}) = (a,b)$. 
\end{enumerate} 
\end{proof}

\begin{Remark}
The arguments in $(b)$ have already been used \cite[4.8]{Wor3}. The author calls these representations \textsf{locally injective}. For the sake of completeness, we decided to give all the details. 
\end{Remark}

\section{The main results}

\noindent Let us collect the main results of this article.

\begin{Theorem}\label{Theorem:ExistenceKronecker}
Let $r \geq 2$ and $(a,b) \in \NN^2_0$ such that $q_{\Gamma_r}(a,b) \leq 1$. If $b - a \geq r - 1$ or $(a,b) = (0,1)$, then there exists an indecomposable representation $V_{(a,b)}$ such that $V_{(a,b)}$ has the equal kernels property and $\dimu V_{(a,b)} = (a,b)$. Hence
\[ \dimu \EKP =  \{ (a,b) \in \NN_0 \times \NN_0  \mid q_{\Gamma_r}(a,b) \leq 1, b-a \geq r-1\} \cup \{(0,1)\} = \cD.\]
\end{Theorem}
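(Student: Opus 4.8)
The plan is to prove only the nontrivial inclusion $\cD \subseteq \dimu\EKP$, since the reverse inclusion $\dimu\EKP \subseteq \cD$ was already obtained from Lemma $\ref{Lemma:Bijection}$ and Theorem $\ref{Theorem:ConstantJordanTypeRestrictions}$ in the computation following Theorem $\ref{Theorem:ExistenceJordanTypes}$. Accordingly, I would fix $(a,b) \in \cD$ and construct an indecomposable $V_{(a,b)} \in \EKP$ with $\dimu V_{(a,b)} = (a,b)$, organizing the argument as a case distinction on the value of $q_{\Gamma_r}(a,b)$. The whole point is that the genuine constructions are already available in Propositions $\ref{Result1}$, $\ref{Result2}$ and $\ref{Result3}$, so this final step is essentially an assembly that checks exhaustiveness and performs the two reductions (push-down, preprojectivity).

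First I would dispose of the boundary cases. If $(a,b) = (0,1)$, I take $V_{(0,1)} := P_1$, the simple projective; since $(P_1)_1 = 0$, every $P_1^\alpha$ has trivial kernel, so $P_1 \in \EKP$. If $q_{\Gamma_r}(a,b) = 1$, then $(a,b)$ is a real root, and the hypothesis $b-a \geq r-1 > 0$ forces $b > a$; by Kac's Theorem $\ref{Theorem:Kac}(b)$ there is a unique indecomposable with this dimension vector, and it can be neither regular (Example $\ref{Examples}$ would give $q_{\Gamma_r} < 1$) nor preinjective (Lemma $\ref{Lemma:PropertiesARquiver}(c)$ would give the first coordinate strictly larger), hence it is preprojective and lies in $\EKP$ by $\cite[2.2.3]{Wor1}$. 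This leaves the case $q_{\Gamma_r}(a,b) \leq 0$; here the definition of $\cD$ forces $a \geq 2$, and Example $\ref{Examples}$ shows that $q_{\Gamma_r}(a,b)\leq 0$ together with $b-a \geq r-1$ can only occur for $r \geq 3$ and restricts $b$ to the range $a < b < (\frac{r+\sqrt{r^2-4}}{2})a$.

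For $q_{\Gamma_r}(a,b) \leq 0$ I would split this range into exactly the three intervals treated in Section $3$ and invoke the corresponding proposition. If $b \leq (r-1)a$, Proposition $\ref{Result1}$ directly supplies a brick $V_{(a,b)} := F_{(a,b)} \in \EKP$. If $(r-1)a+1 \leq b \leq (r-\frac{1}{r-1})a$, Proposition $\ref{Result2}$ gives $E_{(a,b)} \in \Inj \subseteq \rep(C_r)$ with $\dimu \pi_\lambda(E_{(a,b)}) = (a,b)$, and if $(r-\frac{1}{r-1})a < b < (\frac{r+\sqrt{r^2-4}}{2})a$, Proposition $\ref{Result3}$ gives $F_{(a,b)} \in \Inj \subseteq \rep(C_r)$ with $\dimu \pi_\lambda(F_{(a,b)}) = (a,b)$. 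In both of these latter subcases I set $V_{(a,b)}$ to be the push-down $\pi_\lambda(E_{(a,b)})$ (resp. $\pi_\lambda(F_{(a,b)})$), which is indecomposable by Theorem $\ref{TheoremRingelGabriel}(a)$ and lies in $\EKP$ by Theorem $\ref{Theorem:INJEKP}$. Because $b$ is an integer, $b \leq (r-1)a$ and $b \geq (r-1)a+1$ are complementary, so the three intervals cover all of $a < b < (\frac{r+\sqrt{r^2-4}}{2})a$ without gaps. Collecting the cases yields $\cD \subseteq \dimu\EKP$ and hence the displayed equality.

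The part that needs genuine care is the bookkeeping, rather than any new estimate: verifying that the two boundary cases $(a,b)=(0,1)$ and $q_{\Gamma_r}(a,b)=1$ together with the three imaginary-root intervals exhaust $\cD$, and correctly pinning down which unique indecomposable is preprojective. The true obstacle, in the sense of the deepest input being quoted, is the covering-theoretic construction near the imaginary boundary in Proposition $\ref{Result3}$, which rests on the push-down functor $\pi_\lambda$, the equivalence between $\Inj$ and $\EKP$ from Theorem $\ref{Theorem:INJEKP}$, and the existence of the relevant imaginary roots furnished by Lemma $\ref{Lemma:ImaginaryRoot}$ and Kac's Theorem; once these are granted, the present theorem follows purely formally.
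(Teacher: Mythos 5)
Your proposal is correct and takes essentially the same route as the paper's own proof, which likewise reduces to the case $q_{\Gamma_r}(a,b) < 1$ (the real roots and $(0,1)$ being preprojective, hence in $\EKP$ by \cite[2.2.3]{Wor1}), notes that $r = 2$ is impossible there since $b \geq a+1$, and then assembles Propositions \ref{Result1}, \ref{Result2} and \ref{Result3} over the same three intervals, with the push-down $\pi_\lambda$ and Theorem \ref{Theorem:INJEKP} supplying indecomposability and the equal kernels property. You merely make explicit some bookkeeping the paper delegates to its preceding discussion (e.g., that the unique indecomposable for a real root with $b > a$ must be preprojective, and that integrality of $b$ makes the interval split exhaustive), all of which is accurate.
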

\begin{proof}
Let $(a,b) \in \cD$. As mentioned before, we only have to deal with the case $q_{\Gamma_r}(a,b) < 1$. For $r = 2$ this can not happen since $b \geq a + 1$. For $r \geq 3$ the statement follows from $\ref{Result1}$, $\ref{Result2}$ and $\ref{Result3}$. 
\end{proof}

\begin{corollary}\label{Corollary:ExistenceAllp}
Let $(c,d) \in \NN^2_0$. There exists an indecomposable representation of constant Jordan type $[1]^c [2]^d$ if and only if  $(c,d) \in \IJT \cup \{(1,0)\}$. 
\end{corollary}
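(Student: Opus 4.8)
The plan is to read this corollary off the two results already established, with essentially no new work beyond separating out one degenerate value. Theorem~\ref{Theorem:ConstantJordanTypeRestrictions} supplies the implication ``$\Rightarrow$'' and Theorem~\ref{Theorem:ExistenceKronecker} (via the correspondence of Lemma~\ref{Lemma:Bijection}) supplies ``$\Leftarrow$''. The only point needing care is the value $(c,d) = (1,0)$: it corresponds to the simple representations, which Theorem~\ref{Theorem:ConstantJordanTypeRestrictions}(b) explicitly excludes, so it must be handled by hand.

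For ``$\Rightarrow$'', I would let $M$ be indecomposable of constant Jordan type $[1]^c[2]^d$ and split according to whether $M$ is simple. If $M$ is simple, then $M \cong I_1$ or $M \cong P_1$, so $\dimu M \in \{(1,0),(0,1)\}$; in either case $M^\alpha = 0$ for all $\alpha$, and the block count recalled in Section~\ref{Preliminaries} gives Jordan type $[1]^1$, i.e.\ $(c,d) = (1,0)$. If $M$ is not simple, then $(c,d) \in \IJT$ is exactly Theorem~\ref{Theorem:ConstantJordanTypeRestrictions}(b). Hence $(c,d) \in \IJT \cup \{(1,0)\}$ in all cases.

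For ``$\Leftarrow$'', I would take $(c,d) \in \IJT \cup \{(1,0)\}$. If $(c,d) = (1,0)$, the simple representation $I_1$ is indecomposable of constant Jordan type $[1]^1$, as above. If $(c,d) \in \IJT$, set $(a,b) := (d,d+c)$; then $b - a = c \geq r-1$ and $q_{\Gamma_r}(a,b) = q_{\Gamma_r}(d,d+c) \leq 1$, so $(a,b) \in \cD$. Theorem~\ref{Theorem:ExistenceKronecker} then yields an indecomposable representation $V_{(a,b)} \in \EKP$ with $\dimu V_{(a,b)} = (a,b)$. Since $V_{(a,b)} \in \EKP$, each $V_{(a,b)}^{\alpha}$ is injective, so $\rk V_{(a,b)}^{\alpha} = \dim_k (V_{(a,b)})_1 = a = d$ is independent of $\alpha$; by the block count of Section~\ref{Preliminaries}, $V_{(a,b)}$ has exactly $d$ blocks of size $2$ and $b-a = c$ blocks of size $1$, that is, constant Jordan type $[1]^c[2]^d$.

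I do not expect a genuine obstacle: the entire content is the bookkeeping between the Jordan-type coordinates $(c,d)$ and the dimension-vector coordinates $(a,b) = (d,d+c)$ provided by $\Xi$ in Lemma~\ref{Lemma:Bijection}, together with the observation that membership in $\EKP$ forces the generic rank to equal $\dim_k (V_{(a,b)})_1$. It is precisely this last point that upgrades ``$V_{(a,b)}$ has some constant Jordan type'' to ``$V_{(a,b)}$ has constant Jordan type exactly $[1]^c[2]^d$'', and keeping the simple case apart from Theorem~\ref{Theorem:ConstantJordanTypeRestrictions}(b) is the only place where the argument could slip.
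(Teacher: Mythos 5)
Your proof is correct and follows essentially the same route as the paper: the paper condenses the argument into $\Ad = \Xi^{-1}(\dimu \EKP) = \Xi^{-1}(\cD) = \IJT \cup \{(1,0)\}$ via Lemma~\ref{Lemma:Bijection} and Theorem~\ref{Theorem:ExistenceKronecker}, then invokes Theorem~\ref{Theorem:ConstantJordanTypeRestrictions} for necessity. You merely unwind the bijection $\Xi$ and the $\EKP$ rank computation inline (which is exactly the content of Lemma~\ref{Lemma:Bijection}) and make the simple case $(c,d)=(1,0)$ explicit where the paper leaves it implicit.
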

\begin{proof} We conclude with Theorem $\ref{Theorem:ExistenceKronecker}$ and Lemma $\ref{Lemma:Bijection}$
\[\Ad = \Xi^{-1}(\dimu \EKP) = \Xi^{-1}(\cD) = \IJT \cup \{(1,0)\}.\]
Now apply Theorem $\ref{Theorem:ConstantJordanTypeRestrictions}$.
\end{proof}

\noindent Since an indecomposable module $M \in \modd kE_r$ has of Loewy length $1$ if and only if $M$ has constant Jordan type $[1]^1 [2]^0$, we conclude with Proposition $\ref{Proposition:WorchEmbedding}$ and the fact that the Jordan type does not change under $D_{\Gamma_r}$:

\begin{Theorem}\label{Theorem:EquivalenceAllp}
Let $\Char(k) = p > 0$, $r \geq 2$ and $(c,d) \in \NN^2_0$.
 The following statements are equivalent:\begin{enumerate}
\item  There exists an indecomposable $kE_r$-module of constant Jordan type $[1]^c [2]^d$ and Loewy length $2$.
\item There exists an indecomposable $kE_r$-module with the equal images property of constant Jordan type $[1]^c [2]^d$ and Loewy length $2$.
\item There exists an indecomposable $kE_r$-module with the equal kernels property of constant Jordan type $[1]^c [2]^d$ and Loewy length $2$.
\item $(c,d) \in \IJT$.
\end{enumerate}
\end{Theorem}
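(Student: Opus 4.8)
The plan is to transport everything into $\rep(\Gamma_r)$ through the functor $\fF$ and its restrictions, and then read off the answer from the classification already obtained there. Since an equal images module and an equal kernels module each automatically have constant Jordan type, the implications $(b)\Rightarrow(a)$ and $(c)\Rightarrow(a)$ are immediate, the same module witnessing $(a)$. Hence it suffices to prove the cycle $(a)\Rightarrow(d)\Rightarrow(c)$ together with $(d)\Rightarrow(b)$; combined with the two trivial implications this closes all four equivalences.

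For $(a)\Rightarrow(d)$ I would start from an indecomposable $M\in\modd kE_r$ of constant Jordan type $[1]^c[2]^d$ and Loewy length $2$. Proposition \ref{FunctorESP}(a) provides $N\in\rep(\Gamma_r)$ with $M\cong\fF(N)$, and part $(b)$ of the same proposition makes $N$ indecomposable. Under the identification of $x^{\fF(N)}_\alpha$ with $N^\alpha$ recorded in Section \ref{Preliminaries}, the representation $N$ has constant Jordan type $[1]^c[2]^d$; Loewy length $2$ forces $x^M_\alpha\neq 0$ for some $\alpha$, hence $d=d_N\geq 1$ and $N$ is not simple. Theorem \ref{Theorem:ConstantJordanTypeRestrictions}(b) then yields $(c,d)\in\IJT$.

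For $(d)\Rightarrow(c)$, note that $(c,d)\in\IJT$ means $q_{\Gamma_r}(d,d+c)\leq 1$ and $c\geq r-1$, which places $(d,d+c)$ in $\cD$; moreover $(c,d)\in\IJT$ forces $d\geq 1$, the excluded pair $(1,0)$ being precisely what separates $\IJT$ from $\Ad=\IJT\cup\{(1,0)\}$ in Corollary \ref{Corollary:ExistenceAllp}. Theorem \ref{Theorem:ExistenceKronecker} then supplies an indecomposable $V\in\EKP$ with $\dimu V=(d,d+c)$, and the computation in Lemma \ref{Lemma:Bijection} shows $V$ has constant Jordan type $[1]^c[2]^d$. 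Applying $\fF$ and invoking Proposition \ref{Proposition:WorchEmbedding}(b), the module $\fF(V)$ is indecomposable, lies in $\modd_2 kE_r$ and has the equal kernels property; since $d\geq 1$ its Loewy length is exactly $2$, which is $(c)$. For $(d)\Rightarrow(b)$ I would simply apply the duality $D_{\Gamma_r}$ to this $V$: one has $D_{\Gamma_r}V\in\EIP$ with the same constant Jordan type $[1]^c[2]^d$, so $\fF(D_{\Gamma_r}V)$ is an indecomposable equal images module of the required Jordan type and Loewy length $2$ by Proposition \ref{Proposition:WorchEmbedding}(a).

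Because the genuinely hard work is contained in the two imported statements (Theorem \ref{Theorem:ConstantJordanTypeRestrictions} and Theorem \ref{Theorem:ExistenceKronecker}), I expect the only delicate point here to be the bookkeeping at each transfer: checking that indecomposability, the exact multiplicities $[1]^c[2]^d$, and Loewy length \emph{exactly} $2$ (rather than $1$) are simultaneously preserved by $\fF$ and by $D_{\Gamma_r}$. In particular one must keep the semisimple, Loewy length $1$ case $(c,d)=(1,0)$ strictly outside the argument, since it is exactly the presence of the Loewy length $2$ hypothesis that replaces $\Ad=\IJT\cup\{(1,0)\}$ by $\IJT$.
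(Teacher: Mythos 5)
Your proposal is correct and follows essentially the same route as the paper, which likewise deduces the theorem from Proposition \ref{Proposition:WorchEmbedding}, the classification in Corollary \ref{Corollary:ExistenceAllp} (i.e.\ Theorems \ref{Theorem:ConstantJordanTypeRestrictions} and \ref{Theorem:ExistenceKronecker}), the duality $D_{\Gamma_r}$, and the observation that Loewy length $1$ corresponds exactly to Jordan type $[1]^1[2]^0$, so that the Loewy length $2$ hypothesis removes precisely the pair $(1,0)$ from $\Ad = \IJT \cup \{(1,0)\}$. The paper compresses this into one sentence preceding the theorem; your explicit cycle of implications is the same argument with the transfer bookkeeping (indecomposability, multiplicities, Loewy length) spelled out.
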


\begin{corollary}
Let $\Char(k) = p > 2$, $r \geq 2$. For each element $(c,d) \in \NN_0 \times \NN$ the following statements are equivalent:
\begin{enumerate}
\item  There exists an indecomposable $kE_r$-module of constant Jordan type $[1]^c [2]^d$.
\item There exists an indecomposable $kE_r$-module with the equal images property of constant Jordan type $[1]^c [2]^d$.
\item There exists an indecomposable $kE_r$-module with the equal kernels property of constant Jordan type $[1]^c [2]^d$.
\item $(c,d) \in \IJT$.
\end{enumerate}
\end{corollary}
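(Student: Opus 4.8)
The plan is to reduce the statement entirely to Theorem~\ref{Theorem:EquivalenceAllp} by exploiting the hypothesis $p > 2$: I will show that in this characteristic the Loewy-length restriction appearing in that theorem is automatic, so that the two sets of statements coincide.

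The key step is the following claim: if $\Char(k) = p > 2$ and $M \in \modd kE_r$ has constant Jordan type $[1]^c[2]^d$, then $M$ has Loewy length $\leq 2$. To prove it, I would observe that having all Jordan blocks of size $\leq 2$ means $(x^M_\alpha)^2 = 0$ for every $\alpha \in k^r \setminus \{0\}$. Writing $x^M_\alpha = \sum_{i=1}^r \alpha_i x_i^M$ and expanding, the assignment $\alpha \mapsto (x^M_\alpha)^2$ is a homogeneous polynomial map $k^r \to \End_k(M)$ of degree $2$ that vanishes on $k^r \setminus \{0\}$. Since $k$ is algebraically closed, hence infinite, such a polynomial map is identically zero, so all of its coefficients vanish. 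By commutativity of $kE_r$ the coefficient of $\alpha_i^2$ is $(x_i^M)^2$ and the coefficient of $\alpha_i\alpha_j$ for $i \neq j$ is $2\,x_i^M x_j^M$. As $p > 2$, the factor $2$ is invertible, whence $x_i^M x_j^M = 0$ for all $i,j$. This gives $\rad(kE_r)^2 \cdot M = 0$, i.e.\ $M$ has Loewy length $\leq 2$.

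With this claim, I would finish as follows. Fix $(c,d) \in \NN_0 \times \NN$, so $d \geq 1$. If there is an indecomposable $M$ of constant Jordan type $[1]^c[2]^d$, then by the claim $M$ has Loewy length $\leq 2$, and since $d \geq 1$ forces $x^M_\alpha \neq 0$, its Loewy length is exactly $2$. Conversely, any indecomposable module of constant Jordan type $[1]^c[2]^d$ and Loewy length $2$ is in particular an indecomposable module of constant Jordan type $[1]^c[2]^d$. Hence statement (a) of the corollary is equivalent to statement (a) of Theorem~\ref{Theorem:EquivalenceAllp}, and the identical reasoning applies to the equal-images and equal-kernels variants (b) and (c), since those properties already entail constant Jordan type. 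The asserted equivalences then follow directly from Theorem~\ref{Theorem:EquivalenceAllp}.

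I expect the only point needing genuine care to be the characteristic argument in the claim: the passage from ``$(x^M_\alpha)^2 = 0$ for all $\alpha$'' to ``$x_i^M x_j^M = 0$'' uses both that $k$ is infinite, to deduce the vanishing of a polynomial from the vanishing of its associated function, and that $p \neq 2$, to cancel the factor $2$ in the mixed terms. This is precisely the input that fails in characteristic $2$, which is why the corollary assumes $p > 2$; the restriction $d \geq 1$ is imposed to exclude the degenerate Loewy-length-one case $(c,d) = (1,0)$, where the simple module exists while $(1,0) \notin \IJT$ for $r \geq 3$.
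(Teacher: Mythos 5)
Your proposal is correct and takes essentially the same route as the paper: the paper likewise reduces the corollary to Theorem~\ref{Theorem:EquivalenceAllp} by observing that $p>2$ together with constant Jordan type $[1]^c[2]^d$ forces $\Rad^2(kE_r).M=0$, hence Loewy length $\leq 2$, and exactly $2$ because $d\geq 1$. The only difference is that where you prove the key step by hand (vanishing of the quadratic map $\alpha\mapsto (x^M_\alpha)^2$ over the infinite field $k$, then dividing the mixed coefficients $2x_i^Mx_j^M$ by the invertible scalar $2$), the paper simply cites \cite[1.17.1]{Be1}, which asserts for odd $p$ that $\Rad^2(kE_r)$ is generated by the elements $x_\alpha^2$, $\alpha\in k^r\setminus\{0\}$ --- your polarization argument is precisely a self-contained proof of that fact.
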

\begin{proof}
 Let $M \in \modd kE_r$ be an indecomposable  representation of constant Jordan type $[1]^c [2]^d$ with $(c,d) \in \NN_0 \times \NN$. 
Since $2 < p$ we have $\Rad^2(kE_r) = (\{x^2_\alpha \mid \alpha \in k^r\setminus \{0\})$ by \cite[1.17.1]{Be1}. Hence $(\{x^2_\alpha \mid \alpha \in k^r\setminus \{0\}).M = 0$ and $M$ has Loewy length $\leq 2$. Since $d \neq 0$, we conclude that $M$ has Loewy length $2$. Now apply Theorem $\ref{Theorem:EquivalenceAllp}$
\end{proof}

\begin{Remark}
Consider $p = 2$ and $r = 2$. The regular module $kE_2$ has Loewy length $3$ and constant Jordan type $[1]^0 [2]^2$. We have $q_{\Gamma_2}(2,2) \leq 1$ but $(0,2) \not \in \IJT$.
\end{Remark}

\begin{corollary}
Let $r \geq 3$ and $(c,d) \in \NN^2_0$ such that $r-1 \leq c \leq (r-2)d$. Then the $kE_r$-modules with the equal kernels property of constant Jordan type $[1]^{cn} [2]^{dn}$, $n \in \NN$ have wild representation type.
\end{corollary}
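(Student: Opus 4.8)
The plan is to stay inside $\rep(\Gamma_r)$, exhibit a representation embedding of a strictly wild category into the $\EKP$ representations of the prescribed Jordan type, and then transport the conclusion to $\modd kE_r$ via $\fF$. First I would reduce the problem. By Proposition~\ref{Proposition:WorchEmbedding} the restriction $\fF_{\EKP}\colon \EKP \to \modd_2 kE_r$ is a faithful exact functor that reflects isomorphisms, and by Proposition~\ref{FunctorESP}(b) it both preserves and reflects indecomposability; its essential image is exactly the $kE_r$-modules with the equal kernels property of Loewy length $2$. Hence $\fF_{\EKP}$ is itself a representation embedding, and it suffices to prove that the full subcategory $\mathcal{C} \subseteq \EKP \subseteq \rep(\Gamma_r)$ of representations of constant Jordan type $[1]^{cn}[2]^{dn}$, $n \in \NN$, is of wild representation type. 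Using the correspondence behind Lemma~\ref{Lemma:Bijection}, namely that an indecomposable $M \in \EKP$ with $\dimu M = (a,b)$ has constant Jordan type $[1]^{b-a}[2]^{a}$, the objects of $\mathcal{C}$ are precisely the $\EKP$ representations with dimension vector $n(d,c+d)$, $n \in \NN$.

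To prove wildness I would construct a representation embedding $\Theta\colon \modd k\langle X,Y\rangle \to \mathcal{C}$ out of the (strictly wild) category of finite dimensional modules over the free algebra on two generators. On objects, $\Theta$ sends a pair of endomorphisms $(\phi,\psi)$ of a vector space $V$ to the representation $M = M(V,\phi,\psi)$ with $M_1 = V^{d}$ and $M_2 = V^{c+d}$, so that $\dimu M = \dim_k V \cdot (d,c+d)$ lies in $\mathcal{C}$ by construction; a homomorphism $h$ is sent to $(h^{\oplus d}, h^{\oplus(c+d)})$. The structure maps $M(\gamma_1),\ldots,M(\gamma_r)\colon V^{d} \to V^{c+d}$ are taken to be block matrices whose blocks lie in $\{0,\id_V,\phi,\psi\}$, and the two hypotheses on $c$ dictate their shape.

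The key observation is that $c \le (r-2)d$ is equivalent to $c+d \le (r-1)d$, i.e.\ $b \le (r-1)a$ for $(a,b) = (d,c+d)$, so we are exactly in the range treated by Chen's construction in Proposition~\ref{Result1}. I would therefore build the backbone of $\Theta$ from shifted block-identity maps as in Lemma~\ref{Lemma:KernelEcholonEKP}: the bound $c \ge r-1$ yields $b-a+1 = c+1 \ge r$ admissible echelon positions, so $r$ shifted identity staircases can be placed so that every nonzero combination $M^\alpha = \sum_i \alpha_i M(\gamma_i)$ is in block-echelon form and hence injective; by the very definition of $\EKP$ this puts $M$ into $\EKP$. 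The wild parameter is then inserted by replacing a controlled set of the zero blocks lying strictly below the staircase pivots by copies of $\phi$ and $\psi$; since these sit below the pivots they do not disturb the echelon form, so $M$ remains in $\EKP$ for every $(\phi,\psi)$. The role of the upper bound $c \le (r-2)d$ is that the $r-2$ staircases not used to carry $\phi$ and $\psi$ span enough rows to rigidify the whole configuration, forcing any endomorphism $(g_1,g_2)$ of $M$ to have the diagonal form $(h^{\oplus d}, h^{\oplus(c+d)})$ with $h \in \End_k V$ commuting with both $\phi$ and $\psi$.

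Granting such a placement, $\Theta$ induces an isomorphism $\End_{k\langle X,Y\rangle}(V,\phi,\psi) \xrightarrow{\sim} \End_{\Gamma_r}(M(V,\phi,\psi))$ and, more generally, identifies the relevant $\Hom$-spaces; thus $\Theta$ preserves indecomposability and reflects isomorphisms, and being visibly exact it is a representation embedding. Composing with $\fF_{\EKP}$ then finishes the proof. The hard part is meeting the two competing demands simultaneously: the equal kernels property forces $M^\alpha$ to be injective for all $\alpha \ne 0$, which pins the blocks to an echelon pattern with little slack, while the representation-embedding property requires those same blocks to encode $\phi$ and $\psi$ rigidly enough that every endomorphism of $M$ descends to one of $(V,\phi,\psi)$. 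Verifying that an explicit block placement can satisfy both — and that $c \le (r-2)d$ supplies precisely the room needed — is the technical heart of the argument; the reduction and the transport to $\modd kE_r$ are formal.
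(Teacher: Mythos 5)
Your reduction to $\rep(\Gamma_r)$ via $\fF_{\EKP}$ and the identification of the target subcategory with the $\EKP$ representations of dimension vector $n(d,c+d)$ match the paper, and your observation that $c \leq (r-2)d$ means exactly $b \leq (r-1)a$ for $(a,b)=(d,c+d)$ is the right starting point. But your core wildness argument has a genuine gap: the entire proof of strict wildness rests on the claim that one can insert $\phi$ and $\psi$ into zero blocks below the echelon staircases so that \emph{every} endomorphism of $M(V,\phi,\psi)$ is forced into the diagonal form $(h^{\oplus d},h^{\oplus(c+d)})$, and you never establish this --- you explicitly write ``granting such a placement.'' This rigidity is not a routine verification: already making $\End(M)=k$ for the plain staircase configurations requires the delicate, case-dependent choices of the injection $\varphi$ in Chen's construction \cite[3.6]{BoChen1}, and adding two free parameters while simultaneously preserving injectivity of all $M^\alpha$ and fullness of the functor is precisely the kind of computation that can fail for a given placement. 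Relatedly, your heuristic that $c \leq (r-2)d$ ``supplies precisely the room needed'' for rigidity is unsubstantiated; its actual role is only to put $(a,b)$ in the range $b \leq (r-1)a$ where Proposition~\ref{Result1} applies.

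The paper avoids this hand-made embedding entirely. It uses Proposition~\ref{Result1} to produce a \emph{regular brick} $F_{(a,b)} \in \EKP$ with $\dimu F_{(a,b)} = (a,b)$ (this is where $c \geq r-1$, i.e.\ $b-a \geq r-1$, and $b \leq (r-1)a$ enter), and then invokes the general theorem \cite[3.1.1]{Bi1} that for a regular brick over a wild hereditary algebra the filtration category $\cE(\{F_{(a,b)}\})$ is wild. Since $\EKP$ is closed under extensions, every such filtered representation lies in $\EKP$ and has dimension vector $(na,nb)$, hence constant Jordan type $[1]^{nc}[2]^{nd}$; transporting by Proposition~\ref{Proposition:WorchEmbedding} finishes the proof. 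In other words, the endomorphism rigidity you would have to verify by hand is exactly what the brick property plus the filtration theorem deliver for free. To repair your proposal you would either have to carry out the block-placement and $\Hom$-space computations in full (the hard part you deferred), or substitute the brick-plus-filtration argument, at which point your proof becomes the paper's.
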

\begin{proof}
Since $c \leq (r-2)d$ we have $d \leq d + c \leq (r-1)d$. We set $a := d$ and $b := d+c$. In view of $\ref{Result1}$, we find a regular brick $F_{(a,b)}$ in $\EKP$ with dimension vector $(a,b)$. 
Now \cite[3.1.1]{Bi1} implies that the full subcategory $\cE(\{F_{(a,b)}\}) \subseteq \rep(\Gamma_r)$ of representations that have a $\{F_{(a,b)}\}$-filtration is of wild representation type. For $U \in \cE(\{F_{(a,b)}\})$ indecomposable we find $n \in \NN_0$ such that $\dimu U = (na,nb)$. Since $\EKP$ is closed under extensions, we have $U \in \cE(\{F_{(a,b)}\}) \subseteq \EKP$ and $U$ is of constant Jordan type $[1]^{nc} [2]^{nd}$. Now use $\ref{Proposition:WorchEmbedding}$ to conclude the result for $\modd kE_r$.
\end{proof}

\section{Gradable Representations}

\noindent We show that an indecomposable representation $M \in \rep(C_r)$ such that $N := \pi_\lambda(M) \in \EKP$ satisfies $\dim_k N_2 \geq (r-1) \dim_k N_1 + 1$. 

\begin{Lemma}\label{Lemma:SourceRegularTree}
Let $T \subseteq C_r$ be source-regular with $n \in \NN$ vertices. We denote by $T^+$  the sources of $T$ and by $T^-$ the sinks. Assume $\bar{} \colon T_0 \to \NN$ is a map such that for all $a \in T^+$ we have $\overline{a} \leq \overline{b}$ for all $b \in n_T(a)$. Let $m := \max \{ \overline{a}  \mid a \in T^+\}$,  then 
\[m + (r-1) \sum_{a \in T^+} \overline{a} \leq \sum_{b \in T^-} \overline{b} .\]
\end{Lemma}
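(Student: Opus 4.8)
The plan is to bypass induction on the number of sources and instead read off the inequality from the global tree structure of $T$. Since $T$ is a connected subquiver of the $r$-regular tree $C_r$, it is itself a finite tree, and since $C_r$ is bipartite (every vertex is a sink or a source), the parent of a sink is always a source and vice versa. The decisive move is to root $T$ not at an arbitrary vertex but at a \emph{maximal} source: fix $x_0 \in T^+$ with $\overline{x_0} = m$ and root $T$ at $x_0$, so that every vertex other than $x_0$ acquires a well-defined parent (its neighbour on the path towards $x_0$) and, dually, a set of children.

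First I would record the purely combinatorial bookkeeping. Writing $C(a) \subseteq T^-$ for the set of child sinks of a source $a$, the source-regularity of $T$ forces every source to have all $r$ of its $C_r$-neighbours inside $T$; hence the root $x_0$ has exactly $r$ children, while each non-root source has exactly $r-1$ children (all its neighbours except the parent). The key point is that the sets $C(a)$, as $a$ ranges over $T^+$, partition $T^-$: every sink is a child of its unique parent, and that parent is a source. Consequently
\[
\sum_{b \in T^-} \overline{b} = \sum_{a \in T^+} \sum_{b \in C(a)} \overline{b}.
\]

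Next I would feed in the monotonicity hypothesis. Each child $b \in C(a)$ is in particular a neighbour $b \in n_T(a)$ of the source $a$, so $\overline{b} \geq \overline{a}$. For a non-root source this gives $\sum_{b \in C(a)} \overline{b} \geq (r-1)\overline{a}$, while for the root $x_0$, all $r$ of its children satisfy $\overline{b} \geq m$, so its child-sum is at least $rm = m + (r-1)m$. Summing over all sources then yields
\[
\sum_{b \in T^-} \overline{b} \geq rm + (r-1)\sum_{a \in T^+ \setminus \{x_0\}} \overline{a} = m + (r-1)\sum_{a \in T^+} \overline{a},
\]
which is exactly the assertion; here the final equality just rewrites $rm = m + (r-1)\overline{x_0}$ and reabsorbs $x_0$ into the sum.

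The step I expect to be the real obstacle is producing the extra summand $+m$, and it is worth explaining why the obvious approach stumbles there. The natural first attempt is induction on $|T^+|$, peeling off a leaf-source $x$ together with its $r-1$ leaf-sinks exactly as in the proof of Lemma \ref{Lemma:ImaginaryRoot}. This fails precisely when the peeled source realises the maximum, i.e.\ $\overline{x} = m$: after deletion the maximum over the surviving sources may strictly drop, and the inequalities $\overline{y_i} \geq \overline{x}$ on the removed leaf-sinks recover only $(r-1)\overline{x}$, never the surplus $+m$ demanded by the statement. Rooting at a maximal source is the device that repairs this: it is the one source permitted to retain all $r$ of its children, and the resulting surplus $rm - (r-1)m = m$ supplies the otherwise missing term. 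Everything else is elementary, so once the rooting is in place the proof reduces to the two displayed lines above.
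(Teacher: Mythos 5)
Your argument is correct, and it takes a genuinely different route from the paper's. The paper proceeds by induction on the number of sources: it picks a source $x$ with \emph{minimal} value $\overline{x}$, splits $T$ at $x$ into the maximal subtrees $T(1),\ldots,T(r)$ hanging off the neighbours of $x$, applies the inductive hypothesis to the subtrees that still contain sources, and recovers the surplus $+m$ through bookkeeping with the subtree maxima $m_i$ and the estimates $m_i \geq \overline{x}$ and $\overline{b}_i \geq \overline{x}$. You avoid induction entirely: rooting at a source attaining the maximum, noting that by bipartiteness the child-sets $C(a)$, $a \in T^+$, partition $T^-$ with $|C(x_0)| = r$ and $|C(a)| = r-1$ for $a \neq x_0$, and summing the hypothesis $\overline{b} \geq \overline{a}$ over this partition yields the inequality in two displayed lines, with the surplus $+m$ produced exactly by the root's extra child. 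Your version is shorter and makes the origin of the $+m$ term transparent, whereas the paper's inductive scheme has the mild virtue of matching the peeling technique used in Lemma \ref{Lemma:ImaginaryRoot}. One small remark: your closing critique of the naive ``peel a leaf-source'' induction does not actually apply to the paper's proof, which peels at a \emph{minimal} source and tracks the maxima of all resulting components precisely to dodge the failure you describe. Finally, both proofs implicitly use that a source-regular subquiver contains all arrows incident to its sources, so that each source of $T$ has all $r$ of its $C_r$-neighbours as neighbours in $T$; the paper makes the same assumption when it asserts $|n_T(x)| = r$, so this is no gap relative to the paper.
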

\begin{proof}
We prove the statement by induction on $n = |T^+|$. We write $T^+ = \{x_1,\ldots,x_n\}$ and consider the case $n = 1$, then $\overline{x}_1 = m$ and $T_0 = \{x_1\} \cup n_T(x_1)$. It follows 
\[  \sum_{b \in T^-} \overline{b} = \sum_{b \in n_T(x_1)} \overline{b} \geq r\overline{x}_1 = m + (r-1)\overline{x}_1 = m + (r-1) \sum_{a \in T_+} \overline{a}. \]
Now let $n \geq 2$ and $x \in T^+$ such that $\overline{x} = \min\{ \overline{a} \mid a \in T^+\}$. We have $n_T(x) = \{b_1,\ldots,b_{r}\}$ and $|n_T(x)| = r $. For each $i \in \{1,\ldots,r\}$ we denote with $T(i)$ the maximal full subtree of $T$ such that $b_i \in T(i)_0$ and $x \not\in T(i)_0$. 
Since $n \geq 2$, there is $i \in \{1,\ldots,r\}$ such that $T(i)_0 \cap T^+ \neq \emptyset$, i.e. $T(i)_0 \neq \{b_i\}$. Without loss of generality we can assume that $1 \leq l \leq r$ is maximal such that $T(i)_0 \cap T^+ \neq \emptyset$ for all $1 \leq i \leq l$, i.e. $T(i)$ is source-regular with $< n$ sources for all $1 \leq i \leq l$. We  define $m_i := \max \{ \overline{a} \mid a \in T(i)_0 \cap T^+\}$ for all $i \in \{1,\ldots,l\}$ and assume without loss of generality that $m = m_l$. We get with the inductive hypothesis
\begin{align*}
\sum_{b \in T^-} \overline{b} &= \sum^{l}_{i=1} \sum_{b \in T(i)_0 \cap T^-} \overline{b} + \sum^{r}_{l+1} \overline{b}_i\geq (r-1) (\sum^l_{i=1} \sum_{a \in T(i)_0 \cap T^+}  \overline{a}) + \sum^l_{i=1} m_i + \sum^{r}_{l+1} \overline{b}_i \\
&= (r-1) \sum_{a \in T^+} \overline{a} - (r-1)\overline{x} + \sum^l_{i=1} m_i + \sum^r_{i=l+1} \overline{b}_i \\
&= (r-1) \sum_{a \in T^+} \overline{a} + m + \sum^{l-1}_{i=1} m_i + \sum^r_{i=l+1}\overline{b}_i - (r-1)\overline{x}\\ 
&\geq (r-1) \sum_{a \in T^+} \overline{a} + m + \sum^{l-1}_{i=1} \overline{x} + \sum^r_{i=l+1}\overline{x} - (r-1)\overline{x}\\ 
&\geq (r-1) \sum_{a \in T^+} \overline{a} + m.
\end{align*}
\end{proof}

\begin{Lemma}
Let $M \in \Inj \subseteq \rep(C_r)$ be indecomposable. Assume that $(a,b) = \dimu \pi_\lambda(M)$ and let $x_1
,\ldots,x_n$ such that $\{x_1,\ldots,x_n\} = \supp(M) \cap (C_r)^+$ and $|\{x_1,\ldots,x_n\}| = n$. Moreover, let $m := \max \{ \dim_k M_{x_i} \mid i \in \{1,\ldots,n\} \}$. Then the full subquiver with vertex set $\supp(M)$ is source-regular with $n$ vertices and 
\[ b \geq (r-1)a+ m  \geq (r-1)a+1.\]
\end{Lemma}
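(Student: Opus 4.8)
The plan is to deduce both assertions from the structural estimate on source-regular trees, Lemma~\ref{Lemma:SourceRegularTree}, by feeding it the dimension function $z \mapsto \dim_k M_z$. The membership $M \in \Inj$ supplies exactly the two ingredients that are needed: it forces the support to be source-regular, and it makes the dimension function increase along every arrow, which is precisely the monotonicity hypothesis of that lemma.

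First I would establish that the full subquiver $T$ on $\supp(M)$ is source-regular with $n$ sources. Since $M$ is indecomposable, its support is connected, and as a connected subquiver of the $r$-regular tree $C_r$ it is itself a tree. For the source-regularity condition $n_{C_r}(x) \subseteq T_0$ for every source $x \in T_0$: if $x$ is a source in $\supp(M)$ then $M_x \neq 0$, and for each arrow $\delta \colon x \to y$ in $C_r$ the map $M(\delta)$ is injective because $M \in \Inj$; hence $M_y \neq 0$ and $y \in \supp(M)$. As $x$ is a source all $r$ of its neighbours are reached this way, so $n_{C_r}(x) \subseteq T_0$. The sources of $T$ are exactly $x_1,\ldots,x_n$, so $T$ is source-regular with $n$ sources.

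Next I would set $\overline{z} := \dim_k M_z$ for $z \in T_0$ and record the translation dictionary. By the defining formulas for the push-down $\pi_\lambda$ we have $\sum_{a \in T^+} \overline{a} = a$ and $\sum_{b \in T^-} \overline{b} = b$, while by definition $\max_{a \in T^+}\overline{a}$ is exactly the quantity $m$ in the statement, matching the $m$ of Lemma~\ref{Lemma:SourceRegularTree}. The monotonicity hypothesis of that lemma requires $\overline{a} \leq \overline{b}$ for every source $a$ and every neighbour $b$ connected to $a$ by an arrow $a \to b$; since $M \in \Inj$ sends this arrow to an injective map, $\dim_k M_a \leq \dim_k M_b$, which is exactly the required inequality. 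Applying Lemma~\ref{Lemma:SourceRegularTree} then yields $m + (r-1)a \leq b$, that is, $b \geq (r-1)a + m$.

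Finally, because every source in the support carries a nonzero space we have $m \geq 1$, giving $b \geq (r-1)a + m \geq (r-1)a + 1$. I expect no serious obstacle: the entire combinatorial content of the estimate is already carried by Lemma~\ref{Lemma:SourceRegularTree}, and the sole role of $\Inj$ is to convert injectivity into the growth of the support and into the arrow-wise monotonicity of dimensions. The one point deserving a word of care is the degenerate situation in which $\supp(M)$ contains no source: then, there being no arrows among sinks, indecomposability forces $M$ to be a simple supported at a single sink, so $a = 0$ and $b = 1$, and the claimed bound $b \geq (r-1)a + 1$ still holds with the middle term vacuous.
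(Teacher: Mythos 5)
Your proposal is correct and follows essentially the same route as the paper's proof: membership in $\Inj$ forces $\supp(M)$ to be a source-regular tree (every neighbour of a source in the support lies in the support by injectivity of the maps $M(\delta)$), the dimension function $z \mapsto \dim_k M_z$ is monotone along arrows for the same reason, and Lemma~\ref{Lemma:SourceRegularTree} then yields $b \geq (r-1)a + m \geq (r-1)a+1$. Your explicit handling of the degenerate case in which $\supp(M)$ contains no source is a point the paper leaves implicit, but otherwise the two arguments coincide.
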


\begin{proof}
Let $i \in \{1,\ldots,n\}$ and $y \in n_{C_r}(x_i) = x_i^+$, then $y \in \supp(M)$ since the $k$-linear map $M_{x_i} \to M_y$ is injective. We conclude 
\[ \{ y \in (C_r)_0 \mid \exists i \in \{1,\ldots,n\} \colon y \in n_{C_r}(x_i) \} \cup \{x_1,\ldots,x_n\}  \subseteq \supp(M).\]
 Since $\{x_1,\ldots,x_n\} = \supp(M) \cap (C_r)^+$, we conclude 
\[ \{ y \in (C_r)_0 \mid \exists i \in \{1,\ldots,n\} \colon y \in n_{C_r}(x_i)  \} \cup \{x_1,\ldots,x_n\} = \supp(M).\]
Hence $\supp(M)$ induces a source-regular tree $T$ and for $x \in T^+$ and all $y \in n_T(x)$ we have $\dim_k M_x \leq \dim_k M_y$. Now Lemma $\ref{Lemma:SourceRegularTree}$ implies 
\[m + (r-1)a = m + (r-1) \sum_{x \in T^+} \dim_k M_a \leq \sum_{y \in T^-} \dim_k M_y  = b.\]
\end{proof}

\section*{Acknowledgement}

\noindent I would like to thank Rolf Farnsteiner
 for his detailed comments, on an earlier version of this article, that helped to improve the exposition of this article.


\begin{bibdiv}
\begin{biblist}
\addcontentsline{toc}{chapter}{\textbf{Bibliography}}

\bib{Assem1}{book}{
title={Elements of the Representation Theory of Associative Algebras, I}
subtitle={Techniques of Representation Theory}
series={London Mathematical Society Student Texts}
author={I. Assem},
author={D. Simson},
author={A. Skowro\'nski},
publisher={Cambridge University Press},
date={2006},
address={Cambridge}
}

\bib{Be1}{book}{
title={Representations of elementary abelian p-groups and vector bundles},
author={D. Benson},
publisher={Cambridge University Press},
series={Cambridge Tracts in Mathematics},
volume={208},
date={2016},
}

\bib{Bi1}{unpublished}{
title={Representations of constant socle rank for the Kronecker algebra},
author={D. Bissinger},
date={2016},
status={Preprint, arXiv:1610.01377v1}
}

\bib{Bi2}{article}{
title={Representations of Regular Trees and Invariants of AR-Components for Generalized Kronecker Quivers},
author={D. Bissinger},
date={2018},
journal={Algebras and Representation Theory},
volume={21},
number={2},
pages={331-358}
}

\bib{Bi3}{webpage}{
author ={D. Bissinger},
title={Representations of regular trees and wild subcategories for generalized Kronecker quivers. PhD-Thesis},
url={http://macau.uni-kiel.de/receive/dissertation_diss_00013419},
date={2017}
}

\bib{BoChen1}{article}{
title={Dimension vectors in regular components over wild
Kronecker quivers},
journal={Bulletin des Sciences Math\'{e}matiques},
volume={137},
pages={730-745},
author={B. Chen},
date={2013}
}

\bib{CFP1}{article}{
title={Modules of Constant Jordan Type},
author={Carlson, J. F.},
author={Friedlander, E. M.},
author={Pevtsova, J.},
date={2008},
journal={J. Reine Angew. Math.},
volume={614},
pages={191-234}
}

\bib{CB1}{webpage}{
title={Geometry of representations of algebras},
author={W. Crawley-Boevey},
url={https://www.math.uni-bielefeld.de/~wcrawley/geomreps.pdf}
date={1993}
}

\bib{Far2}{unpublished}{
author ={R. Farnsteiner},
title={Lectures Notes: Nilpotent Operators, Categories of Modules, and Auslander-Reiten Theory},
note={http://www.math.uni-kiel.de/algebra/de/farnsteiner/material/Shanghai-2012-Lectures.pdf},
date={2012}
}

\bib{Gab2}{article}{
title={Covering spaces in representation theory},
author={K. Bongartz},
author={P. Gabriel},
journal={Invent. Math.},	
year={1981/82},
pages = {331-378},
volume = {65},
}

\bib{Gab3}{book}{
title={The universal cover of a representation finite algebra},
series={Representations of Algebras, Lecture Notes in Mathematics},
volume={903},
pages={68-105},
date={1981},
author={P. Gabriel},
publisher={Springer}
address={Berlin}
}

\bib{Kac1}{article}{
title={Infinite roots systems, representations of graphs and invariant theory},
author={V. G. Kac},
journal={Invent. Math.},
volume={56},
date={1980},
pages={57-92}
}

\bib{Kac3}{article}{
title={Infinite root systems, representations of graphs and invariant theory, II},
author={V. G. Kac},
journal={J. Algebra},
volume={78},
date={1982},
number={1}
pages={141–-162}
}

\bib{Kac2}{book}{
title={Root systems, representations of quivers and invariant theory},
series={Invariant theory(Montecatini), Lecture Notes in Mathematics},
volume={996},
pages={74-108},
date={1982},
publisher={Springer}
author={V. G. Kac},
address={Berlin}
}

\bib{Ker3}{article}{
title={Representations of Wild Quivers},
journal={Representation theory of algebras and related topics, CMS Conf. Proc.},
volume={19},
date={1996},
pages={65-107}, 
author={O. Kerner},
}

\bib{KerLuk1}{article}{
title={Elementary modules},
author={O. Kerner},
author={F. Lukas},
journal={Math. Z.},
volume={223},
pages={421-434},
date={1996}
}

\bib{Ri7}{webpage}{
title={Covering Theory},
author={C.M. Ringel},
url={https://www.math.uni-bielefeld.de/~ringel/lectures/izmir/izmir-6.pdf}
}

\bib{Ri6}{article}{
title={Indecomposable representations of the Kronecker quivers},
author={C.M. Ringel},
journal={Proc. Amer. Math. Soc.},
volume={141},
date={2013},
number={1},
pages={115--121}
}

\bib{Ri10}{article}{
title={The elementary 3-Kronecker modules}
author={C.M. Ringel},
status={Preprint, arXiv:1612.09141},
date={2016}
}

\bib{Assem2}{book}{
title={Elements of the Representation Theory of Associative Algebras, II},
subtitle={Tubes and Concealed Algebras of Euclidean type},
series={London Mathematical Society Student Texts},
author={D. Simson},
author={A. Skowro\'nski},
publisher={Cambridge University Press},
date={2007},
address={Cambridge}
}

\bib{Wor1}{article}{
title={Categories of modules for elementary abelian p-groups and generalized Beilinson algebras},
author={J. Worch},
journal={J. London Math. Soc.},
volume={88},
date={2013},
pages={649-668}
}

\bib{Wor3}{webpage}{
author ={J. Worch},
title={Module categories and Auslander-Reiten theory for generalized Beilinson algebras. PhD-Thesis},
url={http://macau.uni-kiel.de/receive/dissertation_diss_00013419},
date={2013}
}

\end{biblist}
\end{bibdiv}

\end{document}